\theoremstyle{plain}
\newtheorem*{rep@theorem}{\rep@title}
\newcommand{\newreptheorem}[2]{%
\newenvironment{rep#1}[1]{%
 \def\rep@title{#2 \ref{##1}}%
 \begin{rep@theorem}}%
 {\end{rep@theorem}}}
\newtheorem{thm}{Theorem}[section]
\newtheorem{lem}[thm]{Lemma}
\newtheorem{prop}[thm]{Proposition}
\newtheorem{cor}[thm]{Corollary}
\theoremstyle{definition}
\newtheorem*{cont@example}{\cont@title}
\newcommand{\newcontexample}[2]{%
\newenvironment{cont#1}[1]{%
 \def\cont@title{#2 \ref{##1} continued}%
 \begin{cont@example}}%
 {\end{cont@example}}}
\newtheorem{defn}[thm]{Definition}
\newtheorem{exmp}[thm]{Example}
\newtheorem{rem}[thm]{Remark}
\theoremstyle{remark}
\newtheorem*{notation}{Notation}
\newtheorem*{assumption}{Assumption}
\newtheorem*{SF}{Serre's Generation and Finiteness Theorems}
\newtheorem*{CBC}{Cohomology and Base Change} 
\newtheorem*{Quot}{Representability of the Quot Functor} 
\newtheorem*{Flattening}{Existence of the Universal Flattening Stratification} 
\newtheorem*{CM}{Theorem of Castelnuovo and Mumford}
\newtheorem*{GR}{Gotzmann Regularity}
\newtheorem*{CPS}{The Cohomology of Projective Space}
\newtheorem*{SCT}{Semicontinuity Theorem}
\newtheorem*{STI}{Scheme Theoretic Image}
\newtheorem*{HPRF}{Hilbert Polynomials and Relative Flatness}
\begin{document}

\title{Quasi-Splines and their Moduli}
\author{Patrick Clarke}
\date{\today}

\maketitle

\begin{abstract}
We study what we call {\bf quasi-spline sheaves} over locally Noetherian schemes.
This is done with the intention of considering splines from the point of view of moduli theory.  In other words, we study the way in which certain objects 
that arise
in the theory of splines can be made to depend  on parameters.
In addition to  {\bf quasi-spline sheaves}, we treat
{\bf ideal difference-conditions}, and individual {\bf quasi-splines}.
Under certain hypotheses each of these types of objects admits
a fine moduli scheme.  The moduli of quasi-spline sheaves is proper, and
there is a natural compactification of the moduli of ideal difference-conditions.
We include some speculation on the uses of these moduli in the theory of splines
and topology, and  an appendix with a treatment of the {\bf Billera-Rose homogenization}
in scheme theoretic language.
\end{abstract}

\selectlanguage{french}
\begin{abstract}
On \'etudie ce que l'on appelle {\bf les faisceaux de quasi-splines} sur des sch\'emas localement noeth\'erien, l'id\'ee \'etant
de les consid\'erer du point de vue de la th\'eorie des espaces de modules.
En d'autres termes, on \'etudie la fa\c{c}on dont certains objets issus de la th\'eorie des splines peuvent d\'ependre des param\`etres.  En plus des {\bf faisceaux de quasi-splines}, on \'etudie les {\bf conditions-diff\'erences d'id\'eaux} et les {\bf quasi-splines} individuelles.
Sous certaines hypoth\`eses, chacun de ces types d'objets admet un sch\'ema de modules fin.
On d\'emontre que le sch\'ema de modules des faisceaux quasi-splines est propre
et qu'il existe une compactification naturelle de l'espace
de modules des conditions-diff\'erences d'id\'eaux.  On discute
finalement de l'utilisation qui pourrait \^etre faite de ces espaces de modules en th\'eorie des splines et en topologie.  L'article inclut 
une annexe o\`u
{\bf l'homog\'en\'eisation de Billera-Rose} est pr\'esent\'ee dans le langage de la th\'eorie des sch\'emas.
\end{abstract}
\selectlanguage{english}

\section{Introduction}
\subsection{Quasi-spline sheaves}
Given a scheme $B$, the $s$-fold sum $\mathcal{O}_B^s$ 
is a sheaf of $\mathcal{O}_B$-algebras
with entrywise multiplication and addition.
A quasi-coherent subsheaf $\mathcal{S}$ of $\mathcal{O}_B^s$
generalizes the notion of spline functions on $B$ if it contains the diagonal 
copy of $\mathcal{O}_B \subseteq \mathcal{O}_B^s$ and is closed 
under multiplication.  This is simply to say $\mathcal{S}$ is a quasi-coherent
 $\mathcal{O}_B$-subalgebra of  $\mathcal{O}_B^s.$
Because of the close relationship with spline functions, we
call such an $\mathcal{S}$ a sheaf of {\bf quasi-splines}.  
A simple example is 
Example \ref{example:simplest}.
\begin{exmp} 
\label{example:simplest}
Let $B = \operatorname{Spec}\mathbf{R}[x].$
The sheaf associated to the 
$\mathbf{R}[x]$-module
$$
S = \{(g_1,  g_2) \ | \ g_1 - g_2 \in (x^2) \} \subseteq (\mathbf{R}[x])^2
$$
is a sheaf of quasi-splines. It is naturally  thought of as the splines with continuous first derivatives 
over the subdivision  $\mathbf{R} = (-\infty, 0] \cup [0, \infty).$
\end{exmp}
We will focus on quasi-splines over projective schemes as in Example \ref{exmp:saturated}.
\begin{exmp} 
\label{exmp:saturated}
Let $B = \mathbf{P}^1_\mathbf{R} = \operatorname{Proj}\mathbf{R}[x, z].$
The sheaf $\mathcal{S}$ associated to the graded 
$\mathbf{R}[x, z]$-module
$$
 ^h  S = \{(G_1,  G_2) \ | \ G_1 - G_2 \in (x^2) \} \subseteq (\mathbf{R}[x, z])^2.
$$
is a sheaf of quasi-splines.  $ ^h  S$ is the homogenization of the splines of Example \ref{example:simplest}
as defined in \cite{billera-rose:1991}.  Additionally, $ ^h  S$ is saturated, i.e. the map 
$$ ^h  S \to \bigoplus_d \Gamma(\mathbf{P}^1_\mathbf{R}, \mathcal{S}(d))$$
is a graded isomorphism.  Together, these facts imply the module from Example \ref{example:simplest}
is canonically identified as $S = \mathcal{S}(U_0)$ where $U_0 \subseteq \mathbf{P}^1_\mathbf{R}$
is the set on which $z \neq 0.$
\end{exmp}

Although  quasi-splines are closely related to splines.  It is not always possible to think of them as such.
Consider  Example \ref{exmp:not-spline}.
\begin{exmp} 
\label{exmp:not-spline}
Let $B = \operatorname{Spec}\mathbf{R}[x].$
The sheaf associated to the 
$\mathbf{R}[x]$-module
$$
S = \{(g_1,  g_2) \ | \ g_1 - g_2 \in (x^2+1) \} \subseteq (\mathbf{R}[x])^2
$$
is a sheaf of quasi-splines, but it cannot be thought of as splines in any obvious way.
\end{exmp}

We are interested in studying quasi-spline sheaves which depend of 
parameters.  To this end,
given a $Z$-scheme $B$
we define a {\bf $Z$-family of quasi-spline sheaves over $B$} 
as
\begin{itemize}
\item a sheaf of quasi-splines $\mathcal{S}$
over   $B$ such that 
\item for any morphism $f\colon Z' \to Z$, the pullback 
$\pi_B^*\mathcal{S}$
is a sheaf of quasi-splines over $Z' \times_Z B.$
\end{itemize}
This definition eliminates from consideration sheaves 
$\mathcal{S} \subseteq \mathcal{O}_{B}^s$
whose inclusion map
$\mathcal{S}\to  \mathcal{O}_{B}^s$
fails to be an inclusion after fixing the value of the parameters.  Example \ref{exmp:family-fail}
gives a sheaf of quasi-splines which fails to be a family.
\begin{exmp}
\label{exmp:family-fail}
Let $Z = \operatorname{Spec} \mathbf{R}[z],$ 
and take $B =  
\operatorname{Spec}\mathbf{R}[x]$
as in Example \ref{example:simplest}.
 The morphism $B \to Z$ is given by $z \mapsto x.$
The sheaf associated to the 
$\mathbf{R}[x]$-module
$$
S = \{(g_1,  g_2) \ | \ 
g_1-g_2 \in (x^2) \} \subseteq (\mathbf{R}[x])^2
$$
is not a $Z$-family. This can be seen by first setting $g = (-x^2, x^2)$
and identifying $$S = \mathbf{R}[x][g] / (g^2 -x^4).$$
The map $S \to (\mathbf{R}[x])^2$ sends $a_0 + a_1 g \mapsto (a_0 - a_1x^2, a_0+a_1x^2).$
So when $z = 0$ we have $x = 0$  and $S|_{z=0} = \mathbf{R}[g]/(g^2).$
The map to $(\mathbf{R}[x])^2|_{z=0} \cong \mathbf{R}^2$ is not an inclusion since it sends $g \mapsto 0.$
\end{exmp}
On the other hand, Example \ref{exmp:interesting} shows some quasi-spline sheaves are indeed families.
\begin{exmp} 
\label{exmp:interesting}
Let $Z = \operatorname{Spec} \mathbf{R}[z],$ and
$B = \operatorname{Spec} \mathbf{R}[z][x, y].$
The sheaf associated to the 
$\mathbf{R}[z][x,y]$-module
$$
S = \{(g_1,  g_2, g_3) \ | \ 
\begin{array}{l}
g_1-g_2 \in (x), \\
g_2-g_3 \in (y), \\
g_1-g_3 \in (x+y-z)
 \end{array}\} \subseteq (\mathbf{R}[z][x,y])^3.
$$
is a $Z$-family. One can check this by observing that as an $\mathbf{R}[z]$-module, $S$ has a free basis 
whose $\mathbf{R}$-span is 
$$
\mathbf{R}[x,y] \cdot v_0 \oplus \mathbf{R}[x,y] \cdot v_1 \oplus  \mathbf{R}[x,y] \cdot v_2 \oplus \mathbf{R}[y] \cdot v_3
$$
where 
$$
\begin{array}{rclrrrl}
v_0 & = & (& 1,& 1,& 1& ),\\
v_1 & = & (& 0,& zx - x^2, & zx - x^2 - xy & ),\\
v_2 & = & (& 0,& 0,&  zy - xy - y^2 & ), \text{ and}\\
v_3 & = & (& 0,& xy,& 0& ).
\end{array}
$$
\end{exmp}

The definition of families of quasi-spline sheaves
guarantees that if we fix a scheme $Y$ over $T$, the assignment 
$$
\mathcal{QS}^{(s)}(Y/T)(Z) = \{ \text{$Z$-families of quasi-spline sheaves $ \mathcal{S} \subseteq \mathcal{O}^s_{Z \times_T Y}$} \}
$$
is functoral for $T$-schemes $Z$. This means there is some hope 
that one can find a representing scheme, i.e. 
there is a {\bf moduli scheme}
${QS^{(s)}(Y/T)} \in T$-schemes
such that $\operatorname{Mor}(Z, {QS^{(s)}(Y/T)}) = \mathcal{QS}^{(s)}(Y/T)(Z).$
Our first theorem is on the existence of this moduli scheme.
\begin{repthm}{thm:main} In the category of locally Noetherian schemes,
for a 
flat, projective $T$-scheme $Y$ 
the functor
$$
\mathcal{QS}^{(s)}(Y/T) \colon (T-\text{\bf schemes})^\text{op} \to \text{\bf Sets} 
$$
is representable by a closed subscheme  ${QS^{(s)}(Y/T)}$ of the Quot scheme 
$\operatorname{Quot}(\mathcal{O}_Y^s/Y/T).$
\end{repthm}

\subsection{Ideal difference-conditions}
In many of the applications we have in mind, $\mathcal{S}$  is  defined 
as the subset of $\mathcal{O}_Y^s$ whose sections satisfy {\bf ideal difference-conditions}.
That is to say, the sheaf $\mathcal{S}$ is {\bf defined by conditions} that
written locally are
\begin{equation}
\label{equation:ideal-difference}
\mathcal{S} = \{ (g_1, \dotsc, g_s) \ | \ g_j- g_k \in \mathfrak{I}_{jk} \text{ for } 1 \leq j < k \leq s \}
\end{equation}
for ${s \choose 2}$ ideals $\mathfrak{I}_{jk}\subseteq \mathcal{O}_Y.$
All of our examples defined quasi-splines this way.

Allowing the ideals to vary by introducing parameters leads to an interesting subtlety. 
For a fixed value of the parameters, there are two different ways to define a quasi-spline sheaf.
On one hand
we can compute the sheaf of 
quasi-splines 
defined by the ideals with the parameters considered as 
variables, and then restrict the sheaf to the fixed parameter values.
On the other, we could fix value of the parameters in the ideals
and then compute a possibly different quasi-spline sheaf.

To be clear, denote by 
$\mathcal{S}_\mathfrak{I}$
the sheaf of quasi-splines defined by ideals $(\mathfrak{I}_{jk})_{jk}.$
For simplicity assume that we have a single parameter $z \in \mathbf{R}[z],$ and we are interested in the fixed value $z=0.$
Consider ideals $(\mathfrak{I}_{jk}(z))_{jk}$
which depend on  $z$.
There is a natural map 
\begin{equation}
\label{equation:natural-map}
\mathcal{S}_{\mathfrak{I}(z)}|_{z=0} \to \mathcal{S}_{\mathfrak{I}(z=0)}
\end{equation}
which may or may not be an isomorphism.  However, 
the map is an inclusion for all $z$ if and only 
if $\mathcal{S}_{\mathfrak{I}(z)}$ is a $Z$-family of quasi-spline sheaves (here $Z= \operatorname{Spec} \mathbf{R}[z]$).

The ideals in Example \ref{exmp:interesting} are shown in the continuation of this example 
to lead to 
 sheaves where the map in (\ref{equation:natural-map}) is
an inclusion but not an isomorphism at $z=0.$ 
\begin{contexmp}{exmp:interesting}
For any given $z \in \mathbf{R}$, the quasi-splines of Example \ref{exmp:interesting} are naturally thought of as splines over the region $\Omega$ of plane in the complement 
of the triangle with vertices
$(z , 0), (0, z),$ and $(0,0).$  The relevant subdivision is shown in Figure \ref{figure:3-regions}, and is made up of three parts
\begin{itemize}
\item $\Omega_1 = \{ (x,y) \ | \ 0 \leq x, 0 \leq x+y -z \},$ 
\item $\Omega_2 = \{ (x,y) \ | \ x \leq 0 \leq y \},$  and
\item $\Omega_3 = \{ (x,y) \ | \ x +y -z \leq 0, y \leq 0 \}.$ 
\end{itemize}
The sheaf defined by first setting $z=0$ and then computing quasi-splines is strictly larger than those obtained by 
restricting from the family.  For instance,
$$
(y, y-x,-x)
$$
is a quasi-spline for the $z=0$ ideals, but it is not the restriction of a quasi-spline in the family.  In
other words, the map in (\ref{equation:natural-map}) is 
an inclusion but is not surjective.

As in Example \ref{example:simplest} these two sets of splines can be characterized in terms of continuity and the existence of derivatives. The splines
in the family when restricted to $z=0$ are exactly those which are both continuous over $\Omega$
and have continuous first partial derivatives at $(0,0) \in \mathbf{R}^2$.  The splines computed by first setting $z=0$ is the larger set of all continuous splines.
\end{contexmp}
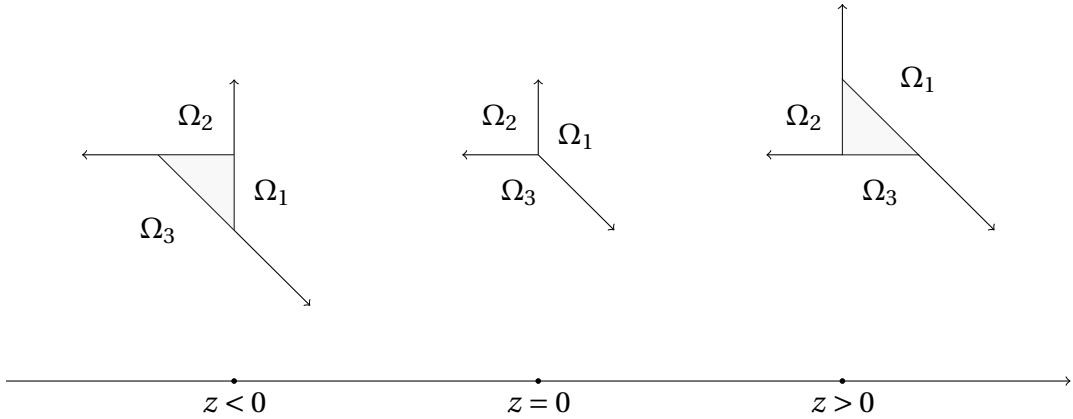
\begin{figure}[h!]
\begin{center}
\begin{tikzpicture}[scale=1]
\coordinate (n0) at (-3-1,0);
\coordinate (nX1) at (-2-1,0);
\coordinate (nXn1) at (-4-1,0);
\coordinate (nYn1) at (-3-1,-1);
\coordinate (nXn2) at (-5-1, 0);
\coordinate (nY1) at (-3-1,1);
\coordinate (nY2) at (-3-1,2);
\coordinate (nQ2) at (-1-1,-1);
\coordinate (nQQ2) at (-2-1, -2);
\coordinate (nO1) at (1/2-3-1,-1/2); 
\coordinate (nO2) at (-1/2-3-1,1/2); 
\coordinate (nO3) at (-1-3-1,-1); 

\draw [->] (n0) -- (nXn2); 
\draw [->] (n0)-- (nY1);
\draw [->](nYn1)--(nQQ2);
\draw (n0) -- (nYn1);
\draw (nYn1) -- (nXn1);
\fill[gray!20,nearly transparent] (n0) -- (nXn1) -- (nYn1)--cycle;
\node at (nO1){$\Omega_1$};
\node at (nO2){$\Omega_2$};
\node at (nO3){$\Omega_3$};

\coordinate (z0) at (0,0);
\coordinate (zX1) at (1,0);
\coordinate (zXn1) at (-1,0);
\coordinate (zYn1) at (0,-1);
\coordinate (zXn2) at (-2, 0);
\coordinate (zY1) at (0,1);
\coordinate (zY2) at (0,2);
\coordinate (zQ2) at (1,-1);
\coordinate (zQQ2) at (1, -2);
\coordinate (zO1) at (1/2,1/2/2); 
\coordinate (zO2) at (-1/2,1/2); 
\coordinate (zO3) at (-1/2/2,-1/2); 

\draw [->] (z0) -- (zXn1); 
\draw [->] (z0)-- (zY1);
\draw [->] (z0)--(zQ2);
\node at (zO1){$\Omega_1$};
\node at (zO2){$\Omega_2$};
\node at (zO3){$\Omega_3$};

\coordinate (p0) at (3+1,0);
\coordinate (pX1) at (4+1,0);
\coordinate (pXn1) at (2+1,0);
\coordinate (pY1) at (3+1,1);
\coordinate (pY2) at (3+1,2);
\coordinate (pQ2) at (5+1,-1);
\coordinate (p2Q2) at (7+1, -1);
\coordinate (pO1) at (1+3+1,1); 
\coordinate (pO2) at (-1/2+3+1,1/2); 
\coordinate (pO3) at (1/2+3+1,-1/2); 


\draw (p0) -- (pX1) -- (pY1)--cycle;
\fill[gray!20,nearly transparent] (p0) -- (pX1) -- (pY1)--cycle;

\draw [->] (pY1) -- (pY2);
\draw [->] (pX1) -- (pQ2);
\draw [->] (p0) -- (pXn1);
\node at (pO1){$\Omega_1$};
\node at (pO2){$\Omega_2$};
\node at (pO3){$\Omega_3$};

\coordinate (tp) at (7,-3);
\coordinate (t0) at (0,-3);
\coordinate (tn) at (-7, -3);

\draw[->](tn)--(tp);
\draw[fill] (t0) circle (0.03);
\node [below]at (t0) {$z=0$};
\draw[fill] (-4,-3) circle (0.03);
\node [below]at (-4,-3) {$z<0$};
\draw[fill] (4,-3) circle (0.03);
\node [below]at (4,-3) {$z>0$};

\end{tikzpicture}
\end{center}
\caption{$\Omega$ and the subdivision as $z$ varies from Example 
\ref{exmp:interesting} and its continuation.
}
\label{figure:3-regions}
\end{figure}

With these considerations in mind, for a $Z$-scheme $B$
we define a
 {\bf $Z$-family of ideal difference-conditions over $B$}
 to be an ${s \choose 2}$-tuple $(\mathfrak{I}_{jk})_{jk}$ of
quasi-coherent ideals $\mathfrak{I}_{jk} \subseteq \mathcal{O}_B$
which have two properties:
\begin{itemize}
\item $\mathfrak{I}_{jk}$ remains an ideal after any base change, i.e. $\mathbf{V}(\mathfrak{I}_{jk})$
is flat, and
\item base change of the quasi-splines defined by the $\mathfrak{I}_{jk}$'s equals the 
quasi-splines defined by the base change of the  $\mathfrak{I}_{jk}$'s.
\end{itemize}
Denoting the Hilbert scheme by  $\operatorname{Hilb}(Y/T),$ we have the representability theorem:
\begin{repthm}{thm:condition-moduli}
In the category of locally Noetherian schemes,
for any  projective $T$-scheme $Y$ 
the functor
$$
\mathcal{C}^{(s)}(Y/Z)(Z) = \{ \text{$Z$-families of ideal difference-conditions on $Z \times_T Y$} \}
$$
is representable by a scheme ${C^{(s)}(Y/Z)}$ obtained as the universal 
flattening stratification of a certain sheaf $\mathcal{H}$ over
$\operatorname{Hilb}(Y/T)^{s \choose 2}.$ 
\end{repthm}
\noindent Notice that there is no $T$-flatness condition on $Y$ in this statement.  
This has the interesting consequence that $C^{(s)}(Y/Z)$ exists, even if $QS^{(s)}(Y/Z)$ might not.

The moduli of ideal difference conditions is not proper.  This can been seen in  Example \ref{exmp:interesting}.
This example  suggests a {\bf compactification} of $C^{(s)}(Y/T).$  To do this
we use an auxiliary scheme 
based on the notion of a {\bf compatible pair} $(\mathcal{S}, (\frak{I}_{jk})_{jk}).$
By this we mean
the composition  
$$
 \mathcal{S} \to \mathcal{O}_B^s \to \bigoplus_{jk} \mathcal{O} / \mathfrak{I}_{jk}
$$
is zero.  Equivalently, $\mathcal{S}$ is contained in the quasi-spline sheaf $\mathcal{S}_\mathfrak{I}$ made up of sections 
of $\mathcal{O}^s_B$ which satisfy the ideal difference-conditions $(\mathfrak{J}_{jk})_{jk}$.  However, it is not necessary that $\mathcal{S}$ equals $\mathcal{S}_\mathfrak{I}$.

Based on this idea, we construct {the \bf moduli of compatible pairs} $P^{(s)}(Y/T).$   This scheme is proper and in the category of locally Noetherian schemes it represents the functor
$$
\mathcal{P}^{(s)}(Y/T)(Z) =\{ \text{compatible pairs $(\mathcal{S}, (\mathfrak{I}_{jk})_{jk})$ of $Z$-families over $Z\times_TY$}  \}.
$$
${C^{(s)}(Y/Z)}$  sits as a locally closed subscheme of ${P^{(s)}(Y/T)}$
and presented this way, a natural compactification
is given by the  scheme-theoretic closure
 $\overline{{C}}^{(s)}(Y/T)$  of ${C^{(s)}(Y/Z)}$ in ${P^{(s)}(Y/T)}.$ 

An important property of the compactification is that it allows for a {\bf universal family of quasi-spline sheaves}
 over $\overline{{C}}^{(s)}(Y/T) \times_T Y$ that extends the one naturally living over ${C^{(s)}(Y/T)} \times_T Y.$  
  The case for the ``correctness'' of this choice of 
 compactification can be made on the grounds of Proposition \ref{prop:C-correct}:
 \begin{repprop}{prop:C-correct}
 Let ${C^{(s)}(Y/Z)} \to H$ be a morphism to a scheme such that $H \times_Y Y$ is
equipped 
with an $H$-family of compatible pairs whose restriction to ${C^{(s)}(Y/Z)} \times_T Y$ 
is the universal family
of ideal difference-conditions.
Assume $H$ equals the scheme theoretic image of ${C^{(s)}(Y/Z)}$ in $H$.  Then 
the morphism $H \to {P^{(s)}(Y/T)}$ factors through $\overline{{C}}^{(s)}(Y/T).$
 \end{repprop}
 \noindent
 However,
 a possibly more compelling fact is the naturality of the families $\overline{{C}}^{(s)}(Y/T)$ admits, such as the one in  Example \ref{exmp:interesting}.
 
 On the moduli of ideal-difference conditions the Hilbert polynomial of the quasi-spline sheaf is locally unchanged.
 It is natural to ask for a further stratification of the moduli space into subschemes on which the full Hilbert series is unchanged.
 Our section on ideal difference-conditions concludes with a discussion on how  the {\bf degeneracy loci} 
 of a morphism of certain locally free sheaves can be used to give such a stratification.
 
\subsection{Quasi-splines}
In the construction of the moduli space ${QS^{(s)}(Y/T)}$ we assumed that $Y$ was flat. 
A consequence of this is that the Hilbert polynomial of $\mathcal{S}$ 
is locally independent of the point in ${QS^{(s)}(Y/T)}.$  Using this fact, we get another interesting
theorem about representing the functor of sections:
\begin{repthm}{thm:sections}
Fix flat $T$-scheme $Y$, and for notational simplicity, assume that Hilbert polynomial $p_{\mathcal{O}_Y}$ of $Y$ is independent of $T$.
In the category of locally Noetherian schemes,
the functor 
 $$
\mathcal{E}^{(s)}_{p, d}(Y/T)(Z) =  \{ \tau \in \Gamma(Z \times_T Y, \mathcal{S}(d)) \ | \ \text{ $\mathcal{S}$ has Hilbert polynomial $p$} \}
$$
is representable for $d \geq m$ where $m$ depends on $p$ and $p_{\mathcal{O}_Y}$.  
\end{repthm}

Denote the universal quotient associated to the Quot scheme by $\mathcal{G}.$
Over the piece of ${QS^{(s)}(Y/T)}$ which lies in the component of the Quot scheme labeled by the polynomial $p_\mathcal{G}=  s \, p_{\mathcal{O}_Y} - p,$
the representing scheme is
$$
{E_{p,d}^{(s)}(Y/T)} = \operatorname{{Spec}} \operatorname{Sym} \mathcal{V}_d,
$$
where $$\mathcal{V}_d = \mathcal{H}om_{{QS^{(s)}(Y/T)}}(\pi_*\mathcal{S}(d), \mathcal{O}_{{QS^{(s)}(Y/T)}}),$$
 and $\pi \colon {QS^{(s)}(Y/T)} \times_T Y \to {QS^{(s)}(Y/T)}$ is the projection.
The  number $m$ can be taken to be the maximum the Gotzmann numbers of 
the pair of polynomials  given below in Lemma \ref{lem:hilbert-polynomials}. 

A fact of independent interest used in the proof  is that for $d$ at or beyond this value, the sheaf  $\pi_*\mathcal{S}(d)$ is locally free.  This implies that the rank of $\pi_*\mathcal{S}(d)$ agrees with its Hilbert polynomial.

\subsection{Billera-Rose Homogenization}

The paper concludes with an appendix on the  homogenization procedure introduced in 
\cite{billera-rose:1991}.  Originally, this was an identification between 
splines on a triangulation in $\mathbf{R}^n$ with splines on the cone over the triangulation
in $\mathbf{R}^{n+1}.$  The splines over the cone form a graded module, and the degree $d$ homogeneous
piece of this module is naturally identified with splines on $\mathbf{R}^n$ all of whose entries
are degree $\leq d.$

We consider this procedure as a comparison between quasi-spline sheaves on three schemes: the original scheme $A,$ its projective closure $\widehat{A},$
and the affine cone over the projective closure. 
We find 
in Proposition \ref{prop:saturation} that if the homogeneous coordinate ring $\widehat{A}$
is a quotient of the homogeneous coordinate ring of the ambient projective space, then
Billera-Rose homogenization translates questions about quasi-splines on the original scheme into questions about an quasi-splines on
its projective closure.

To prove this result, homogenization and projective closure is formulated in terms of filtered algebras and modules, rather than the traditional approach of 
submodules of graded modules \cite{EGA_II}.  We find that this approach is very satisfying and interesting in its own right.

\subsection{Remarks and Speculations}

\subsubsection{Complementary techniques}

Our moduli spaces complement a larger line of investigation into multivariate splines, and in the hope of facilitating reciprocity between this work and the existing research programmes, we sketch out some of the basics of these alternate approaches.  We do not use the techniques that are typically used in research on splines, but we expect that this difference will prove to be an asset.  The object of study is essentially the same and results discovered from one point of view can be used to inform the other.  

 For the most part, current investigations begin with a given class of  triangulations (or polyhedral complexes) in $\mathbf{R}^n$
over which they consider piecewise polynomials.  This set-up appeared in the original spline literature of
 Hrennikoff \cite{hrennikoff-1941}, Courant \cite{courant-1943} and Schoenberg \cite{schoenberg-1946}.  In recent work this basic view is enhanced by advanced techniques such as  
 the so-called B\'ezier-Bernstein 
methods 
and tools from homological and commutative algebra.

{ \bf B\'ezier-Bernstein methods.}
B\'ezier-Bernstein methods were first 
used by de Casteljau  \cite{deCasteljau-1959}
 and then reintroduced in Farin \cite{farin-1977}.  They have proven extremely valuable in the theory of splines as evidenced by 
 the Hilbert polynomial computations in  Alfeld-Schumaker \cite{alfeld-schumaker-1987, alfeld-schumaker-1990} and Alfeld-Schumaker-Whiteley \cite{alfeld-schumaker-whiteley}.
 
These methods are based on expansion of splines in Mob\"ius's barycentric coordinates \cite{mobius:1827}.  These
are functions 
$$
(\mu_0, \dotsc, \mu_n) \colon \Delta^n \to \mathbf{R}^{n+1}
$$
which embed $\Delta^n$ into $\mathbf{R}^{n+1}$ as the subset 
$$
\Delta^n = \{ (p_0 , \dotsc, p_n)  \in\mathbf{R}^{n+1}  \ | \ p_0 + \dotsm +p_n=1   \text{ and } p_i \geq 0 \ \forall i \}.
$$
 The restriction of a spline on a triangulation to a simplex can be expanded as a 
 polynomial in the $\mu$'s.   Thus a spline can be encoded in a list of polynomials in the $\mu$'s: one for each 
 $n$-simplex in a triangulation.
 
 The characteristic feature of B\'ezier-Bernstein methods is to consider the splines in their $B$-form. 
 This means a degree $d$ is fixed, 
 and splines whose degree is bounded by $d$ are represented by a list of 
homogeneous degree $d$ 
polynomials written as a span of the  normalized monomials 
  $$
 b_{(\nu_0, \dotsc, \nu_n)} = \frac{(\nu_0+ \dotsm +\nu_n)!}{\nu_0! \dotsm \nu_n!} \mu_0^{\nu_0} \dotsm \mu_n^{\nu_n}
 $$
 called the barycentric Bernstein polynomials.  When $\nu_0+ \dotsm +\nu_n = d$
 these polynomials form a basis for the degree $\leq d$ polynomials on $\Delta^n.$
 This  makes good use of the 
 the seemingly unfortunate fact that 
 the relation $1 = \mu_0+ \dotsm + \mu_n$ leads to many expansions for a given polynomial.
 
 There are two distinct advantages that the B-form representation of a spline provides.  The first is that 
 the normalization guarantees 
 $$
 \sum_{\nu_0+\dotsm +\nu_n = d}  b_{(\nu_0, \dotsc, \nu_n)}  = (\mu_0+ \dotsm + \mu_n)^d = 1.
 $$
 and so the approximation argument of Bernstein's proof \cite{bernstein-1912} 
 of 
 Weierstrauss's approximations theorem \cite{weierstrass-1885}
 can be immediately adapted.  The other advantage is that  given a pair of $n$-simplicies in a triangulation 
 which share a facet, one can easily check if a polynomial assignment to the pair defines 
 continuous function.  For example, if the shared facet is the $0^\text{th}$ and the vertex order agrees 
 on it, then the two polynomials must agree when $\mu_0 = 0.$  This amounts to checking equality of 
 the coefficients in the $B$-form of those  barycentric Bernstein polynomials with $\nu_0 = 0.$

Additional aspects of B\'ezier-Bernstein methods include de Casteljau's algorithm \cite{deCasteljau-1959}
that treats the computational problem of evaluating a spline given it its $B$-form as a function 
 in the usual coordinates on $\mathbf{R}^n.$ A related problem is understanding how the $B$-form changes 
 under barycentric subdivision of the simplices.  A comprehensive reference for this approach to splines is the book of Lai-Schumaker \cite{lai-schumaker}.

 {\bf Homological Algebra.}  Closer to the spirit of our approach are those which use tools from commutative  and homological algebra.
 Homological algebraic thinking appeared  as early as Schumaker \cite{schumaker:1979}, 
and was used explicitly in Billera's proof \cite{billera-1988} of Strang's conjecture \cite{strang:1974}
 on the dimension of splines spaces.  Specifically, Schumaker considered ideal-difference conditions and  the first terms of a complex 
 fully introduced by  Billera. The homology in degree zero is the ring of spline
 functions.  This complex was refined by Scheck \cite{schenck:1997} who produced another complex with splines in degree zero, and has the interesting property that  the module of splines is flat as a module over the ring of polynomials if and only the first cohomology is zero.  
 
 These and subsequent investigations introduced tools from commutative algebra and combinatorics, such as local cohomology \cite{schenck-stillman:1997}, Gr\"obner bases \cite{billera-rose:1989}, and  posets \cite{yuzvinsky:1992}.  An additional interesting participant is the theory of hyperplane arrangements  as found in Schenk's proof \cite{schenck-2014} of a conjecture of Foucart-Sorokina \cite{foucart-sorokina-2013}. 
  
 {\bf Geometry.} Geometry itself has been used too.  Stiller \cite{stiller:1983} identified splines over certain subdivisions in the plane with global sections of certain vector bundles over $\mathbf{P}^1.$  This identification was exploited by using Riemann-Roch to produce explicit formulas. This point of view was developed in several papers such as 
 Iarrobino \cite{iarrobino-1997}, Schenck-Germita \cite{geramita-schenk-1997} and
 Schenck-Stiller \cite{Schenck-Stiller-2002}.  In a different direction, Yuzvinsky \cite{yuzvinsky:1992}  considerations of a \v{C}ech resolution of splines over a polyhedral complex  is decidedly geometric.
 
Deep connections between the geometric picture and the B\'ezier-Bernstein methods can be seen in the Hilbert polynomial formulas of Alfeld-Schumaker \cite{alfeld-schumaker-1987, alfeld-schumaker-1990} and Alfeld-Schumaker-Whiteley \cite{alfeld-schumaker-whiteley}.   These are expressed in terms of incidence conditions between different facets of the triangles in the given triangulation.  One can see immediately in these  the  ancient geometric technique of Appollonius 
 now understood as specifying a linear system in terms of base-points.

Together, these various viewpoints on spline functions give a lot of information about the the moduli scheme, the relevant  degeneracy loci and Fitting subschemes.  We expect that as we learn more about its geometric and arithmetic properties, these will also serve to enrich these other approaches to the subject.

\subsubsection{The questions of dimension and flatness}
The constructions here are particularly suitable to  the {\bf dimension question} in the theory of splines.
This was posed by   
Strang \cite{strang:1974}, and in this context asks
\begin{center}
``What is the Hilbert series of $\mathcal{S}?$''
\end{center}
The Hilbert polynomial of $\mathcal{S}$ does not change as one moves around within connected components of ${C^{(s)}(Y/T)}.$  This means that just knowing the connected component determines most of the Hilbert series.  

In general, the problem of determining the initial terms of the Hilbert series is daunting. However, 
 when cohomology commutes with base change for $\mathcal{O}_{{C^{(s)}(Y/T)} \times_T Y}$ and  the 
$\mathcal{O}_{{C^{(s)}(Y/T)} \times_T Y}/\mathfrak{I}_{jk}$ (e.g. hypersurface ideal difference-conditions on $\mathbf{P}^n$) the geometry governing the rank of 
$\Gamma(Y, \mathcal{S}(d))$ for small $d$ is the stratification of ${C^{(s)}(Y/T)}$ defined by the 
{\bf degeneracy loci} of the map 
$$
\Gamma(Y, \mathcal{O}^s_{C^{(s)}(Y/T)}(d)) \to 
\bigoplus_{jk} \Gamma(Y, \mathcal{O}_{{C^{(s)}(Y/T)} \times_T Y}(d)/\mathfrak{I}_{jk}(d)).
$$
This is proved  below in Proposition \ref{prop:degeneracy-loci}.
Ultimately, the dimension question for small $d$ is a question of understanding how these subschemes lie in ${C^{(s)}(Y/T)}.$

A related question concerns the {\bf flatness} of the splines over $Y$: 
\begin{center}
``Which quasi-spline sheaves  are flat $\mathcal{O}_Y$-modules?'' 
\end{center}
This question was posed by Billera and Rose in the context of the dimension question \cite{billera-rose-modules}.
This can be interpreted in terms of the $s^\text{th}$ {\bf Fitting subscheme} $Z_s$ of the universal quasi-spline sheaf on 
${C^{(s)}(Y/T)} \times_T Y.$ 
Recall that $Z_s$ is the closed subscheme over which $\mathcal{S}$ cannot be generated by  $s$-sections.
The image
of $Z_s$ under the projection 
$\pi \colon {C^{(s)}(Y/T)} \times_T Y \to {C^{(s)}(Y/T)}$ is made up of those quasi-spline sheaves 
which are not flat on $Y$.

\subsubsection{Spline domains and approximation strategies}

The existence of these moduli spaces points to some interesting possibilities in approximation theory.  For instance in an approximation or interpolation problem, rather than fixing a sheaf $\mathcal{S}$ of quasi-splines and trying to find a best candidate in $\Gamma(Y, \mathcal{S}(d)),$  one could consider the problem of finding a best quasi-spline in $E_{p,d}^{(s)}(Y/T).$  In principle, this frees one from committing to a fixed {\bf spline domain} $D:\Omega = \Omega_1 \cup \dotsc \cup \Omega_s \subseteq Y(\mathbf{R})$, and allows the subdivision to vary.

Putting this onto a satisfactory mathematical footing would require a moduli of spline-domains $\mathcal{D}.$  One could then consider compatible triples
$$(D, (\mathfrak{I}_{jk})_{jk}, \tau) \in \mathcal{D} \times_T \overline{C}^{(s)}(Y/T) \times_T {E_{p,d}^{(s)}(Y/T)}.$$ 
We know of no such object $\mathcal{D}$ in the literature, but see no reason why it shouldn't exist.  Some insight is provided by Example \ref{exmp:moduli-loop} which
indicates the sort of phenomena that arise when interpreting quasi-splines as splines.
\begin{exmp}
\label{exmp:moduli-loop}
Consider $f_z(x,y) = (z^2-1)y - z(x^2 + y^2-1)$ as a family of polynomials on $\mathbf{R}^2$
parameterized by $z \in [-1,1].$  For each $z$ write $\Omega = \Omega_1^z \cup \Omega_2^z \subseteq \mathbf{R}^2$
where 
\begin{itemize}
\item $\Omega_1^z = \{ (x,y) \ | \ f_z(x,y) \leq 0 \}$, and
\item $\Omega_2^z = \{ (x,y) \ | \ f_z(x,y) \geq 0 \}.$
\end{itemize}
Consider the family of splines defined by the quasi-spline $$g_z = (-zf_z(x,y), zf_z(x,y)).$$  Observe that at both $z=-1$ and $z=1$ the quasi spline is $(x^2 + y^2-1, 1-x^2 - y^2).$  However,  $\Omega_1$ and $\Omega_2$
have switched, 
so the spline has reversed signs.  This is illustrated in Figure \ref{figure:sign-reversal}.
Topologically, this is an interval with distinct endpoints in 
$\mathcal{D} \times {E_{p,d}^{(s)}(Y/T)}$ whose projection to ${E_{p,d}^{(s)}(Y/T)}$ is a loop.
\begin{figure}[h]
\begin{center}
\begin{tikzpicture}[scale=1]
\coordinate (n0) at (-3,0);
\draw (n0) circle  (1);
\node at (n0) {$\  \Omega_1$};
\node at (-2,-1.5) {$\Omega_2$};

\coordinate (p0) at (3,0);
\draw (p0) circle (1); 
\node at (p0) {$\  \Omega_2$};
\node at (4,-1.5) {$\Omega_1$};

\coordinate (tp) at (6,-3);
\coordinate (t0) at (0,-3);
\coordinate (tn) at (-6, -3);

\draw[->](tn)--(tp);
\draw[loosely dotted](-6,3)--(6,3);
\draw[fill] (-3,-3) circle (0.03);
\node [below]at (-3,-3) {$z=-1$};
\draw[fill] (3,-3) circle (0.03);
\node [below]at (3,-3) {$z=1$};

\begin{scope}[shift={(-3,3)}]
\draw[thick,domain=-1:1] plot (\x, {\x*\x-1}) ;
\draw[thick,domain=-3/2:-1] plot (\x, {1-\x*\x}) ;
\draw[thick,domain=1:3/2] plot (\x, {1-\x*\x})node[right] {$g$}  ;
\end{scope}

\begin{scope}[shift={(3,3)}]
\draw[thick,domain=-1:1] plot (\x, {1-\x*\x}) ;
\draw[thick,domain=-3/2:-1] plot (\x, {\x*\x-1}) ;
\draw[thick,domain=1:3/2] plot (\x, {\x*\x-1}) node[right] {$g$} ;
\end{scope}

\end{tikzpicture}
\end{center}
\caption{
The splines and spline domains at the ends of the interval $[-1,1]$ from Example \ref{exmp:moduli-loop}.
The spline is indicated by its graph when restricted to $y=0.$ These two functions on $\mathbf{R}^2$
are defined by the same quasi-spline.
}
\label{figure:sign-reversal}
\end{figure}
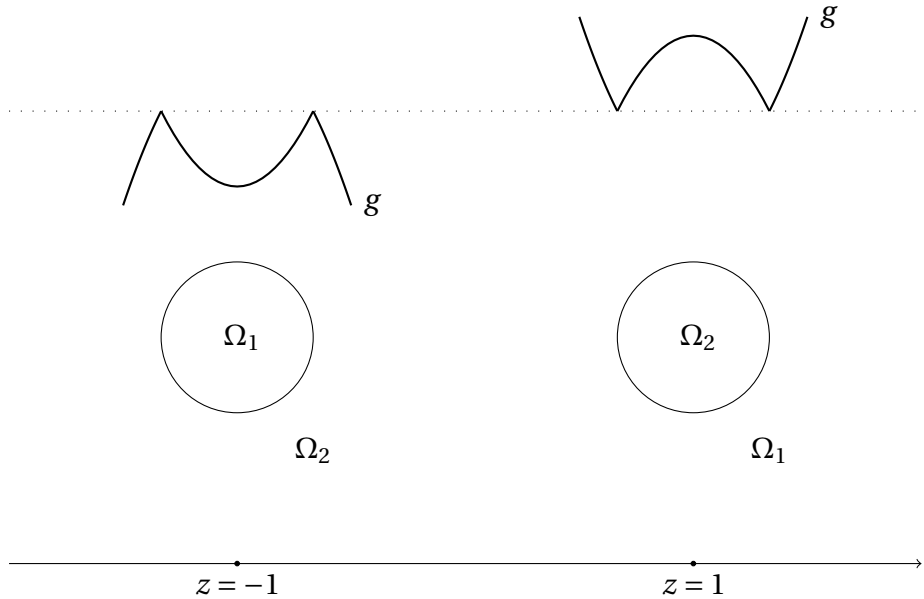
\end{exmp}

\subsubsection{Topology}

In addition to the close relationship to spline theory, quasi-splines have been singled out   in equivariant cohomology and equivariant intersection homology \cite{gilbert-polster-tymoczko} under the name {\bf generalized splines}.  This is the part of the program which began with a description of  the equivariant cohomology smooth compactification of an algebraic group in terms of splines by Bifet-De Concini-Procesi \cite{bifet-deconcini-procesi}.   Brion \cite{brion-1997}  extended this to certain singular spaces, and the most general setting in which quasi-splines appear seems to be the equivariantly formal spaces of Goresky-Kottwitz-MacPherson
\cite{goresky-kottwitz-macpherson-1998}.  These note worthy points in these investigations are  Payne \cite{payne-2006} and Schenck \cite{schenck-2011}.
For us, this  opens up a huge area of connections to topics such as geometric representation theory, Schubert calculus, and quantum cohomology.

\section{Assumptions, Conventions and Notations} 
 This paper is written in scheme-theoretic language.   In this section we collect several relevant standard results, make notations, 
 and specify our assumptions.  These breakdown roughly as notations for projective geometric constructions, results relevant to cohomology and base change, and finally the representability of certain functors such as flattening stratifications and Quot schemes.

\begin{assumption}
We fix an integer $s \geq 1$ throughout.
We are working a category of locally Noetherian schemes, and if we are over a base scheme, this scheme
is also locally Noetherian.  We fix schemes $Y$ and $T.$  This allows us to simplify our notation.  
For example, $QS^{(s)}(Y/T)$ will be written $QS.$ 
\end{assumption}

\begin{notation}
If $B \to Z$ is a $Z$-scheme, $\mathcal{F}$ is a sheaf on $B$, and $\phi \colon Z'\to Z$ a morphism, we denote
\begin{itemize} 
\item the fiber product $B_{Z'} = B \times_Z Z',$ and
\item the pullback $\phi^*\mathcal{F}$ on $B_{Z'}$ by $\mathcal{F}|_{Z'}.$
\end{itemize}
A point $q \in Z$ is assigned the scheme structure $\operatorname{Spec} \mathbf{k}(q),$ and
we often write $B_q$ and $\mathcal{F}|_q$ with this scheme structure on $q$ assumed.
The vertical bar  in the notation for the pullback is to avoid confusion with the stalk $\mathcal{F}_b$
of $\mathcal{F}$ at a point $b \in B.$
\end{notation}

\subsection{Projective Geometry}
We review here some basic constructions and facts of projective geometry.  This is done mostly to establish notation.

\begin{SF} (\cite{serre:1955} see also \cite[Theorem II.5.17]{hartshorne-1977}) Let $Z$ be a Noetherian scheme and $\mathcal{F}$ a coherent sheaf on a projective 
$Z$-scheme $\pi \colon B \to Z.$ Then
\begin{itemize}
\item $R^i\pi_*\mathcal{F}$ is a coherent $\mathcal{O}_Z$-module, and
\end{itemize}
for all sufficiently large $d$
\begin{itemize}
\item $\mathcal{F}(d)$ is generated by global sections, and
\item $R^i\pi_*\mathcal{F}(d) = 0.$ 
\end{itemize}
\end{SF}

\begin{notation}
For a sheaf of graded modules $N$ over a sheaf of graded $\mathcal{O}_Z$-algebras $\mathcal{R}$, we write  
$\tilde{N}$ for the associated sheaf on $\operatorname{{Proj}}(\mathcal{R}).$
Conversely, given a sheaf $\mathcal{F}$ on $\operatorname{{Proj}}(\mathcal{R}),$
we write 
$\Gamma_{*}(\mathcal{F})$ for the graded $\Gamma_*(\mathcal{O}_{\operatorname{{Proj}}(\mathcal{R})})$-module
$$
\Gamma_{*}(\mathcal{F}) = \bigoplus_{d } \pi_* \mathcal{F}(d),
$$
and  $\Gamma_{\geq m}(\mathcal{F})$ if we only take those  $d \geq m.$
Here $\pi \colon \operatorname{{Proj}}(\mathcal{R}) \to Z$ is the projection.
\end{notation}

\begin{lem}
The following statements describe the relationship between $\Gamma_*,$ $\Gamma_{\geq m}$
and $\tilde{(\cdot)}:$
\begin{itemize}
\item $\tilde{(\cdot)}$ is exact;
\item $\Gamma_*$ and $\Gamma_{\geq m}$ are left exact; 
\item $\tilde{(\cdot)} \circ \Gamma_{\geq m}= \tilde{(\cdot)} \circ \Gamma_{*};$
\item $\tilde{(\cdot)}$
 left adjoint to $\Gamma_*;$
 \item the counit $\epsilon \colon \tilde{(\cdot)} \circ \Gamma_*  \to \mathbf{1}$ is a  natural isomorphism;
\item the unit $\eta \colon \mathbf{1} \to  \Gamma_* \circ \tilde{(\cdot)}$ is called the {\bf saturation map.}
 \end{itemize}
\end{lem}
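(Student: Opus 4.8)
The plan is to reduce everything to the case of an affine base and then deploy the standard dictionary between graded modules and sheaves on $\operatorname{Proj}$. Since $\tilde{(\cdot)}$, $\pi_*$, and the twisting functors all commute with restriction to open subsets of $Z$, every one of the six claims may be verified after passing to an affine open cover of $Z$. I would therefore first assume $Z = \operatorname{Spec} A$ with $\mathcal{R}$ the sheaf associated to a graded $A$-algebra $R$, generated in degree one (as is standard in this setting), so that $\operatorname{Proj}\mathcal{R} = \operatorname{Proj} R$ and on each standard affine open $D_+(f)$ one has the explicit formula $\tilde{N}|_{D_+(f)} = (N_f)_0$.

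Granting this local description, the first three statements are formal. For exactness of $\tilde{(\cdot)}$, note that $N \mapsto (N_f)_0$ is the composite of localization at $f$ (exact) with extraction of the degree-zero summand of a $\mathbf{Z}$-graded module (also exact); since exactness of a map of sheaves may be checked on the cover $\{D_+(f)\}$, the functor $\tilde{(\cdot)}$ is exact. For left exactness of $\Gamma_*$ and $\Gamma_{\geq m}$, the twist $\mathcal{F}\mapsto\mathcal{F}(d)$ is exact (tensoring with the invertible sheaf $\mathcal{O}(d)$) and $\pi_*$ is left exact, and a direct sum of left exact functors is left exact. For the identity $\tilde{(\cdot)}\circ\Gamma_{\geq m} = \tilde{(\cdot)}\circ\Gamma_*$, I would use that $(N_f)_0$ depends only on the graded pieces of $N$ in high degree: each of its elements is represented as $x/f^k$ with $\deg x = k\deg f$ arbitrarily large. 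As the inclusion $\Gamma_{\geq m}(\mathcal{F})\hookrightarrow\Gamma_*(\mathcal{F})$ is an isomorphism in every degree $\geq m$, it induces an isomorphism on each $D_+(f)$ after localizing and taking degree zero, hence an isomorphism after applying $\tilde{(\cdot)}$.

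The substance lies in the adjunction and the counit. For $\tilde{(\cdot)}\dashv\Gamma_*$ I would exhibit the unit $\eta_N\colon N \to \Gamma_*(\tilde{N})$ in each degree as the map carrying a homogeneous element to its associated section of the twisted sheaf, and the counit $\epsilon_{\mathcal{F}}\colon \widetilde{\Gamma_*(\mathcal{F})}\to\mathcal{F}$ as the sheafification of the tautological evaluation; the two triangle identities then reduce to a bookkeeping check on each $D_+(f)$ using the formula above, yielding the natural bijection $\operatorname{Hom}(\tilde{N},\mathcal{F}) \cong \operatorname{Hom}(N,\Gamma_*(\mathcal{F}))$. This construction simultaneously identifies $\eta$ as the map named the saturation map, settling the final bullet by definition.

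The main obstacle is showing the counit is an isomorphism. I would test this on a standard affine $D_+(f)$, where the assertion becomes $(\Gamma_*(\mathcal{F})_f)_0 \cong \mathcal{F}(D_+(f))$. Both injectivity and surjectivity rest on the classical extension lemma (valid precisely because $\mathcal{R}$ is generated in degree one and $\mathcal{F}$ is quasi-coherent): a section of $\mathcal{F}$ over $D_+(f)$ becomes, after multiplication by a suitable power $f^n$, the restriction of a global section of $\mathcal{F}(n\deg f)$, and a global section restricting to zero on $D_+(f)$ is annihilated by a power of $f$. Transporting these two facts through the localization formula gives bijectivity in each degree, hence that $\epsilon$ is a natural isomorphism. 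This is the only point where quasi-coherence and the degree-one generation of $\mathcal{R}$ are genuinely used, and it is where I expect the real work to be; the remaining statements are formal consequences of the local description.
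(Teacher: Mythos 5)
The paper gives no proof to compare against: its proof of this lemma reads simply ``Omitted,'' these being the standard facts of the dictionary between quasi-coherent sheaves on $\operatorname{Proj}$ and graded modules (Serre; EGA II; Hartshorne II.5). Your argument is that standard proof, correctly executed: reduction to an affine base, the local formula $\tilde{N}|_{D_+(f)} \cong (N_f)_0$, formal verification of exactness, left exactness, the comparison of $\Gamma_{\geq m}$ with $\Gamma_*$, and the adjunction, with the classical extension lemma carrying the counit isomorphism. You also correctly isolate where the hypotheses enter: quasi-coherence of $\mathcal{F}$ and generation of $\mathcal{R}$ in degree one are exactly what the counit argument requires, and both hold for the graded algebras this paper actually takes $\operatorname{Proj}$ of, such as $(\operatorname{Sym}^\bullet \mathcal{V})[E_1, \dotsc, E_s]$.
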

\begin{proof}
Omitted.
\end{proof}

\begin{rem}
In the affine case, $\tilde{(\cdot)}$ is used for the functor assigning to a module over a ring the associated sheaf on  
the spectrum. Its adjoint equivalence is  $\Gamma(\cdot).$
\end{rem}

\begin{CPS}
(\cite[Theorem III.5.1]{hartshorne-1977}) Let $Z$ be an affine Noetherian scheme.    Then:
\begin{itemize}
\item the natural map $\mathcal{O}_Z[x_0, \dotsc, x_n] \to \Gamma_*(\mathcal{O}_{\mathbf{P}_Z^n})$ is an isomorphism of graded $\mathcal{O}_Z[x_0, \dotsc, x_n]$-modules,
\item $H^i(\mathbf{P}_Z^n, \mathcal{O}_{\mathbf{P}_Z^n}(d)) = 0$ for $0 < i < n$ and all $d$,
\item $H^n(\mathbf{P}_Z^n, \mathcal{O}_{\mathbf{P}_Z^n}(-n-1)) \cong \mathcal{O}_Z$, and
\item the natural map $H^0(\mathbf{P}_Z^n, \mathcal{O}_{\mathbf{P}_Z^n}(d)) \times
H^n(\mathbf{P}_Z^n, \mathcal{O}_{\mathbf{P}_Z^n}(-d-n-1)) \to \mathcal{O}_Z
$ is a perfect pairing of finitely generated free $\mathcal{O}_Z$-modules.
\end{itemize}
\end{CPS}

\begin{rem}
As the first statement indicates, $H^0(\mathbf{P}_Z^n, \mathcal{O}_{\mathbf{P}_Z^n}(d))$ can be interpreted as degree $d$ homogenous polynomials.  So in light of the last statement, 
$$\Gamma_*(\mathcal{O}_{\mathbf{P}_Z^n})^\vee = \bigoplus_d H^n(\mathbf{P}_Z^n, \mathcal{O}_{\mathbf{P}_Z^n}(-d-n-1))$$ should be thought of
as the coalgebra  dual to $\Gamma_*(\mathcal{O}_{\mathbf{P}_Z^n}).$
\end{rem}

\subsection{Relatively Flat Sheaves}
We have here some standard results on relatively flat sheaves.    These are at the core of many constructions
in the theory of moduli schemes.

The notion of relative flatness is largely motivated by interest in studying 
subsheaves $\mathcal{A}$ of a sheaf $\mathcal{B}.$  For $\mathcal{A}$ to remain a subsheaf
of $\mathcal{B}$ after base change, the map $\mathcal{A} \to \mathcal{B}$ must be a {\bf universal inclusion} (also called a ``universal injection'').

We find that the relationship between ``subobject'' and ``universal inclusion'' is made clear by considering when the subobject under consideration is or isn't a sheaf:
If the inclusion is not universal, then the subobject of $\mathcal{B}$ defined as the image of $\mathcal{A}$
is not a sheaf.  Conversely, if this subobject is a sheaf, then the inclusion is universal and the sheaf in question
is $\mathcal{A}.$

In general, it is difficult to recognize a universal inclusion.  However, if the cokernel of the inclusion
is relatively flat, then the map is automatically a universal inclusion. These give a class of universal inclusions we call
 {\bf cokernel-flat.}  If the ambient sheaf is the structure sheaf of a $Z$-scheme, and thus the subsheaf is an 
 ideal, then all universal inclusions are cokernel-flat. Otherwise, one must ``work'' to know if a given map is 
 a universal inclusion.

\begin{prop}
A sheaf $\mathcal{F}$ on a projective $Z$-scheme $\pi \colon B \to Z$ is {\bf relatively flat} if and only if any of the following equivalent conditions hold:
\begin{itemize}
\item for all $b \in B$, the stalk $\mathcal{F}_b$ is a flat $\mathcal{O}_{Z, \pi(b)}$-module;
\item for any affine subsets $U \subseteq B$  and $V \subseteq Z$ such that $\pi(U) \subseteq V$, we have $\mathcal{F}(U)$
is a flat $\mathcal{O}_Z(U)$-module;
\item $\Gamma_{\geq m}(\mathcal{F})$ is $Z$-flat for some $m$.
\end{itemize}
\end{prop}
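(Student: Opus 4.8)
The plan is to read the first bulleted condition---stalkwise flatness of $\mathcal{F}$ over the base---as the working meaning of relative flatness, and to prove the two equivalences (i)$\iff$(ii) and (i)$\iff$(iii). Throughout I would reduce to the case $Z = \operatorname{Spec} A$ with $A$ Noetherian, since relative flatness and each of the three conditions is local on $Z$; for the third condition one checks that enlarging $m$ replaces $\Gamma_{\geq m}(\mathcal{F})$ by a graded direct summand of itself, so $Z$-flatness for one value of $m$ persists for all larger values, and finitely many affine charts can therefore be reconciled by passing to a maximum. I would then reduce to $B = \mathbf{P}^n_A$ by pushing $\mathcal{F}$ forward along a closed immersion $B \hookrightarrow \mathbf{P}^n_A$; this preserves $A$-flatness of the stalks, and by the projection formula it leaves the modules $\pi_*\mathcal{F}(d)$ unchanged.

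The equivalence (i)$\iff$(ii) is just the local criterion for flatness of a module over a ring. Given affine $U \subseteq B$ and $V \subseteq Z$ with $\pi(U) \subseteq V$, the $\mathcal{O}_Z(V)$-module $\mathcal{F}(U)$ is flat if and only if, for every $b \in U$ with corresponding prime $\mathfrak{p}$ of $\mathcal{O}_B(U)$, the localization $\mathcal{F}(U)_{\mathfrak{p}} = \mathcal{F}_b$ is flat over $\mathcal{O}_Z(V)_{\mathfrak{p}\cap\mathcal{O}_Z(V)} = \mathcal{O}_{Z,\pi(b)}$; this is the standard statement that flatness of a module over $A$, viewed through a ring map $A \to B$, may be tested at the primes of $B$ against the contracted primes of $A$. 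Letting $U$ range over an affine cover recovers exactly the stalk condition (i).

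For (iii)$\implies$(i), set $M = \Gamma_{\geq m}(\mathcal{F})$. By the preceding Lemma we have $\tilde{(\cdot)} \circ \Gamma_{\geq m} = \tilde{(\cdot)} \circ \Gamma_*$, and the counit $\tilde{(\cdot)} \circ \Gamma_* \to \mathbf{1}$ is an isomorphism, so $\tilde{M} \cong \mathcal{F}$. The stalk $(\tilde{M})_b = \mathcal{F}_b$ is a homogeneous localization of $M$, obtained by exact and $A$-linear operations---localizing at a homogeneous prime and taking the degree-zero part, the latter being an $A$-module direct summand. Since $M$ is $A$-flat and these operations preserve $A$-flatness, $\mathcal{F}_b$ is flat over $\mathcal{O}_{Z,\pi(b)}$, which is (i).

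The substantive direction is (i)$\implies$(iii), and the hard part will be producing a single $m$ that works, not merely one depending on the fiber. Because $\mathcal{F}$ is $A$-flat, the Hilbert polynomials of the fibers $\mathcal{F}|_q$ are locally constant in $q$ (this is the content of Hilbert Polynomials and Relative Flatness), hence assume only finitely many values on the Noetherian scheme $Z$; by Gotzmann Regularity the Castelnuovo--Mumford regularity of each fiber is bounded in terms of its Hilbert polynomial, so those finitely many values furnish a uniform $m$ for which $\mathcal{F}|_q(d)$ has vanishing higher cohomology for every point $q$ and every $d \geq m$. With this fiberwise vanishing secured, Cohomology and Base Change applies: for $d \geq m$ the higher direct images $R^i\pi_*\mathcal{F}(d)$ vanish, while $\pi_*\mathcal{F}(d)$ is locally free and its formation commutes with base change. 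Consequently $\Gamma_{\geq m}(\mathcal{F})$, being the direct sum of the locally free modules $\pi_*\mathcal{F}(d)$ for $d \geq m$, is $Z$-flat, which is (iii). The delicate point is precisely the uniformity of $m$ over the base: a bare appeal to Serre's vanishing gives a bound only one fiber at a time, and it is the regularity theorems that convert the locally constant Hilbert polynomial into the uniform bound the argument requires.
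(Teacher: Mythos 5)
The paper itself omits the proof of this proposition, so your attempt must be judged on its own merits. Your reduction to an affine Noetherian base and then to $B = \mathbf{P}^n_A$, your proof of (i)$\iff$(ii) by localization at primes, and your proof of (iii)$\implies$(i) via homogeneous localization of $\Gamma_{\geq m}(\mathcal{F})$ (degree-zero part as an $A$-module direct summand of a localization of a flat module) are all correct.

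The direction (i)$\implies$(iii) has a genuine gap at exactly the step you single out as the crux. You claim that ``by Gotzmann Regularity the Castelnuovo--Mumford regularity of each fiber is bounded in terms of its Hilbert polynomial.'' Gotzmann's theorem, as quoted in the paper, bounds the regularity of the ideal sheaf $\mathcal{I}_L$ of a closed subscheme $L \subseteq \mathbf{P}^n_{\mathbf{k}}$ by the Gotzmann number of $p_{\mathcal{O}_L}$; it says nothing about arbitrary coherent sheaves, and for those no such bound exists: the sheaves $\mathcal{O}_{\mathbf{P}^1}(-k)\oplus\mathcal{O}_{\mathbf{P}^1}(k)$ all have Hilbert polynomial $2t+2$ yet have regularity $k$, which is unbounded. (This is precisely why, in Section \ref{section:moduli-quasi-splines}, the paper re-embeds $\mathcal{S}$ as an ideal sheaf $\mathcal{I}_{L \subseteq N}$ before invoking Gotzmann.) Your preliminary step is also suspect: you cite Hilbert Polynomials and Relative Flatness to get local constancy of the fiber Hilbert polynomials, but that theorem is stated only for an integral base, and its standard proof (Hartshorne III.9.9) derives the constancy from the fact that $\pi_*\mathcal{F}(d)$ is locally free for $d \gg 0$ --- essentially condition (iii) itself --- so this appeal risks circularity. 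The repair is that the uniformity you worry about costs nothing and needs no fiberwise input: Serre's theorem in its relative form, as quoted in the paper, already gives a single $d_0$, uniform over the Noetherian base, with $R^i\pi_*\mathcal{F}(d) = 0$ for all $i>0$ and $d \geq d_0$. Since $\mathcal{F}(d)$ is $Z$-flat by (i), the descending induction built into Cohomology and Base Change (start at $i=n+1$, where both sides vanish, and work downward) yields $H^i(B_q, \mathcal{F}|_q(d)) = 0$ for all $i>0$, all $q$, and all $d \geq d_0$, after which Corollary \ref{cor:cbc} gives flatness of each $\pi_*\mathcal{F}(d)$. Alternatively, avoid base change entirely: for $d \geq d_0$ the \v{C}ech complex of $\mathcal{F}(d)$ on the standard affine cover has $A$-flat terms by (ii) and is exact except in degree zero, so splitting it into short exact sequences from the top shows every kernel, in particular $\Gamma(B,\mathcal{F}(d))$, is $A$-flat. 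Either way $\Gamma_{\geq d_0}(\mathcal{F})$ is $Z$-flat, which is (iii).
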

\begin{proof}
Omitted
\end{proof}

\begin{prop}
If 
$$
0 \to \mathcal{F} \to \mathcal{G} \to \mathcal{H} \to 0
$$
is an exact sequence of quasi-coherent sheaves on a $Z$-scheme $B$, and $\mathcal{H}$ and either $\mathcal{G}$ or 
$\mathcal{F}$ are relatively flat, then all three sheaves are.
\end{prop}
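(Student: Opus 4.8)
The plan is to reduce the statement to the standard two-out-of-three property of flatness for a short exact sequence of modules, applied one stalk at a time. Recall that relative flatness is the stalkwise condition: a quasi-coherent sheaf $\mathcal{K}$ on $B$ is relatively flat precisely when, for every $b \in B$, the stalk $\mathcal{K}_b$ is a flat $\mathcal{O}_{Z, \pi(b)}$-module, where $\pi \colon B \to Z$ is the structure morphism. So I would fix a point $b \in B$, set $R = \mathcal{O}_{Z, \pi(b)}$, and carry out the entire argument with $R$-modules; since $b$ is arbitrary, proving the three stalks flat over $R$ establishes the proposition.

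First I would pass to stalks. Taking the stalk at $b$ is an exact functor, being a filtered colimit of localizations, so the given exact sequence of sheaves induces an exact sequence of $\mathcal{O}_{B,b}$-modules
$$
0 \to \mathcal{F}_b \to \mathcal{G}_b \to \mathcal{H}_b \to 0,
$$
which I regard as a sequence of $R$-modules through the ring map $R \to \mathcal{O}_{B,b}$. Relative flatness of $\mathcal{H}$ gives that $\mathcal{H}_b$ is $R$-flat, and the hypothesis supplies flatness of one of $\mathcal{F}_b$ or $\mathcal{G}_b$; the task is now to deduce $R$-flatness of all three.

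The core step is the long exact sequence of $\operatorname{Tor}^R_\bullet(-, M)$, for an arbitrary $R$-module $M$, attached to the displayed sequence. Flatness of $\mathcal{H}_b$ means $\operatorname{Tor}^R_n(\mathcal{H}_b, M) = 0$ for all $n \geq 1$, and feeding this vanishing into the long exact sequence yields isomorphisms $\operatorname{Tor}^R_n(\mathcal{F}_b, M) \cong \operatorname{Tor}^R_n(\mathcal{G}_b, M)$ for every $n \geq 1$. Consequently $\mathcal{F}_b$ is $R$-flat if and only if $\mathcal{G}_b$ is; combined with the assumed flatness of one of them and of $\mathcal{H}_b$, all three stalks are flat, as desired.

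The argument is essentially routine, and no hypothesis on $B$ beyond quasi-coherence of the sheaves enters, consistent with the absence of any projectivity assumption in the statement. If any point demands care, it is the bookkeeping in the long exact sequence, namely checking that the boundary terms in which the $\operatorname{Tor}$ groups of $\mathcal{H}_b$ appear on both sides genuinely vanish, together with the observation that flatness must be tested against \emph{all} $R$-modules $M$, so the stalkwise $\operatorname{Tor}$-isomorphisms are precisely what the conclusion requires. I would also note the familiar asymmetry that makes the hypothesis on $\mathcal{H}$ indispensable: flatness of $\mathcal{F}$ and $\mathcal{G}$ alone does not force $\mathcal{H}$ to be flat, which is why the quotient $\mathcal{H}$ is always among the two assumed flat.
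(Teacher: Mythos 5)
Your proof is correct. The paper itself omits the proof of this proposition, so there is no argument to compare against; your reduction to stalks (which is indeed how relative flatness is defined in general, with no projectivity needed) followed by the long exact sequence of $\operatorname{Tor}^R_\bullet(-,M)$ is precisely the standard argument the paper leaves to the reader, and every step — exactness of stalks, vanishing of all higher $\operatorname{Tor}$ for the flat quotient $\mathcal{H}_b$, and the resulting isomorphisms $\operatorname{Tor}^R_n(\mathcal{F}_b,M)\cong\operatorname{Tor}^R_n(\mathcal{G}_b,M)$ for $n\geq 1$ — checks out.
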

\begin{proof}
Omitted.
\end{proof}

\begin{prop}
If $\mathcal{I}$ is a quasi-coherent ideal sheaf on a $Z$-scheme $B,$ then $\mathcal{I} \to \mathcal{O}_B$
is a universal inclusion if and only if $\mathcal{O}_B/\mathcal{I}$ is relatively flat.
\end{prop}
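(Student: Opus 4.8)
The plan is to read everything off the short exact sequence
$$
0 \to \mathcal{I} \to \mathcal{O}_B \to \mathcal{O}_B/\mathcal{I} \to 0
$$
through the long exact $\operatorname{Tor}$ sequence governing its behavior under base change. Both conditions in the statement are local on $B$ and $Z$ — relative flatness by the stalkwise/affine-open criterion recorded above, and universal injectivity because injectivity of a sheaf map may be checked on affine opens and pullback commutes with restriction — so I would first pass to the affine situation. Choose affine opens $U \subseteq B$ and $V \subseteq Z$ with $\pi(U) \subseteq V$, put $A = \mathcal{O}_Z(V)$ and $R = \mathcal{O}_B(U)$, regarded as an $A$-algebra, and set $I = \mathcal{I}(U) \subseteq R$. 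For a base change $\operatorname{Spec} A' \to V$ the pullback of the inclusion is the natural map $I \otimes_A A' \to R \otimes_A A'$, and the assertion becomes: this map is injective for every $A'$ if and only if $R/I$ is $A$-flat.

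Second, applying $- \otimes_A A'$ to $0 \to I \to R \to R/I \to 0$ yields the exact tail
$$
\operatorname{Tor}_1^A(R, A') \to \operatorname{Tor}_1^A(R/I, A') \xrightarrow{\partial} I \otimes_A A' \to R \otimes_A A'.
$$
The crucial identification is that the kernel of $I \otimes_A A' \to R \otimes_A A'$ equals the image of the connecting map $\partial$, so the failure of universal injectivity is measured exactly by $\operatorname{Tor}_1$ of the cokernel $R/I$, up to the contribution of $\operatorname{Tor}_1^A(R, A')$.

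The forward implication is then immediate and needs no hypothesis on the ambient sheaf: if $R/I$ is $A$-flat then $\operatorname{Tor}_1^A(R/I, A') = 0$ for all $A'$, so $\partial = 0$, the kernel vanishes, and the inclusion is universal — this is the general ``cokernel-flat'' principle. The converse is where the work lies, and it is also where I expect the main obstacle. Universal injectivity only tells us that $\partial$ is the zero map for every $A'$, equivalently that $\operatorname{Tor}_1^A(R,A') \to \operatorname{Tor}_1^A(R/I,A')$ is surjective for all $A'$; surjectivity alone does \emph{not} force $\operatorname{Tor}_1^A(R/I, A') = 0$, since the term $\operatorname{Tor}_1^A(R, A')$ may be nonzero. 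To eliminate it I would invoke relative flatness of the ambient $\mathcal{O}_B$ itself — that is, $A$-flatness of $R$, which holds in the settings of this paper because $B = Z \times_T Y$ with $Y/T$ flat. Then $\operatorname{Tor}_1^A(R, A') = 0$, the connecting map is literally zero, and the kernel of the base-changed inclusion is exactly $\operatorname{Tor}_1^A(R/I, A')$.

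With that in hand the converse finishes quickly: universal injectivity gives $\operatorname{Tor}_1^A(R/I, A') = 0$ for all $A'$, and to deduce $A$-flatness of $R/I$ it suffices, by the local flatness criterion, to test against $A' = A/\mathfrak{a}$ for finitely generated ideals $\mathfrak{a}$ — each such $A'$ arising from a closed-immersion base change $\operatorname{Spec}(A/\mathfrak{a}) \to V$ already covered by the hypothesis. Finally I would repackage the affine-local equivalence as the desired sheaf statement using the stalkwise characterization of relative flatness.
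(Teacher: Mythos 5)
The paper gives no proof to compare against---its proof of this proposition reads ``Omitted''---so your proposal must stand on its own, and it does not: the problem is exactly at the step you flagged and then patched. The forward direction is fine as written and needs no hypothesis beyond the statement: flatness of $R/I$ kills $\operatorname{Tor}_1^A(R/I,A')$, hence the connecting map, hence the kernel. Your diagnosis of the converse is also accurate---universal injectivity only makes $\operatorname{Tor}_1^A(R,A')\to\operatorname{Tor}_1^A(R/I,A')$ surjective---but your fix is to assume $R$ is $A$-flat, i.e.\ that $B$ itself is $Z$-flat, and that hypothesis is not in the statement, which concerns an arbitrary quasi-coherent ideal on an arbitrary $Z$-scheme $B$. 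Your justification that ambient flatness ``holds in the settings of this paper'' does not hold up: the proposition is invoked precisely in the proof of Lemma \ref{lem:characterize-diff-con} (``for ideals, universal inclusions are equivalent to flatness of their cokernels'') on the way to Theorem \ref{thm:condition-moduli}, and the paper emphasizes that that theorem carries \emph{no} $T$-flatness hypothesis on $Y$, so that $B = Z\times_T Y$ need not be $Z$-flat exactly where the proposition is needed.

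Moreover, this is not a gap that a sharper argument could close, because without some flatness of the ambient sheaf the ``only if'' direction is false. Take $\mathcal{I}=0$ on any $B$ that is not $Z$-flat---concretely, $Z=\mathbf{A}^1_{\mathbf{k}}$ and $B$ the reduced origin. Then $\mathcal{I}\to\mathcal{O}_B$ is trivially a universal inclusion, while $\mathcal{O}_B/\mathcal{I}=\mathcal{O}_B$ is not relatively flat. What your argument actually proves is the proposition with the added hypothesis that $\mathcal{O}_B$ is relatively flat, and in that form the ideal structure plays no role at all: it is the general fact that a universally exact sequence of quasi-coherent sheaves with flat middle term has flat cokernel (so the surrounding prose of the paper, which presents the ideal case as special, is itself misleading on this point). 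Since the paper's proof is omitted, one cannot tell whether the author tacitly intended the flatness hypothesis; but judged as a proof of the statement as written, your proposal has a genuine gap, and the statement itself appears to need either that added hypothesis or a weakening to the ``if'' direction only.
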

\begin{proof}
Omitted.
\end{proof}

\begin{SCT}(\cite{EGA_III_2}  see also \cite[Theorem III.12.8]{hartshorne-1977})
Let $\mathcal{F}$ be a coherent $Z$-flat sheaf on a projective  $Z$-scheme $B$. The function
$$
h^i(q, \mathcal{F}) = \dim_{\mathbf{k}(q)} H^i(B_q, \mathcal{F}|_q)
$$
is upper semicontinuous.
\end{SCT}

\begin{CBC}
(\cite{EGA_III_2}  see also \cite[Theorem III.12.11]{hartshorne-1977})
Let $\mathcal{F}$ be a coherent $Z$-flat sheaf on a projective $Z$-scheme $\pi \colon B \to Z$.  
For $q \in Z$, if 
$$
R^i\pi_*(\mathcal{F}) \otimes_{\mathcal{O}_Z} \mathbf{k}(q) \to H^i(B_q, \mathcal{F}|_q)
$$
is surjective, then it isomorphism.  In this case, 
it is an isomorphism for all $q'$
in a open set about $q$,
and the following statements are equivalent:
\begin{itemize}
\item $R^i\pi_*\mathcal{F}$ is flat at $q;$ 
\item The restriction map $$
R^{i-1}\pi_*(\mathcal{F}) \otimes_{\mathcal{O}_Z} \mathbf{k}(q) \to 
H^{i-1}(B|_q, \mathcal{F}|_q)
$$  
is surjective. 
\end{itemize}
\end{CBC}

\begin{rem}
When $R^i\pi_*(\mathcal{F}) \otimes_{\mathcal{O}_Z} \mathbf{k}(q) = H^i(B_q, \mathcal{F}|_q)$ we say 
{\it cohomology commutes with base change in degree $i$}.  In this case,
statements about $R^i\pi_*(\mathcal{F})$ are often reduced to statements about $H^i(B_q, \mathcal{F}|_q)$
(via Nakayama's lemma).
\end{rem}

The vanishing of the first cohomology of a sheaf on a single fiber has significant implications.
\begin{cor}
\label{cor:cbc}
Let $\mathcal{F}$ be a coherent $Z$-flat sheaf on a projective $Z$-scheme $\pi \colon B \to Z.$
If for $q \in Z$ we have $H^1(B_q, \mathcal{F}|_q) = 0,$ then for all $q'$ in a neighborhood of $q$ the sheaf
$\pi_*\mathcal{F}$ is flat at $q'$ and 
$\pi_*\mathcal{F}|_{q'} = H^0(B_{q'}, \mathcal{F}|_{q'}).$  
\end{cor}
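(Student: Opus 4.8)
The plan is to run the Cohomology and Base Change theorem twice: first in degree $i=1$ to control $R^1\pi_*\mathcal{F}$, and then in degree $i=0$ to extract both the base-change isomorphism and the flatness of $\pi_*\mathcal{F}$. Since both conclusions are local on $Z$ (flatness at a point, and an equality of sheaves on a neighborhood), I would begin by replacing $Z$ with an affine Noetherian neighborhood of $q$, so that the quoted theorems apply verbatim and so that every $R^i\pi_*\mathcal{F}$ is coherent by Serre's Generation and Finiteness Theorem.

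First I would apply CBC in degree $i=1$. The map $R^1\pi_*\mathcal{F}\otimes_{\mathcal{O}_Z}\mathbf{k}(q)\to H^1(B_q,\mathcal{F}|_q)$ has zero target by hypothesis, hence is trivially surjective; CBC then promotes it to an isomorphism, so $R^1\pi_*\mathcal{F}\otimes_{\mathcal{O}_Z}\mathbf{k}(q)=0$. Because $R^1\pi_*\mathcal{F}$ is coherent, Nakayama's lemma forces the stalk $(R^1\pi_*\mathcal{F})_q$ to vanish, so in particular $R^1\pi_*\mathcal{F}$ is flat at $q$ (a zero module is flat). Feeding this into the flatness equivalence of CBC at $i=1$, the flatness of $R^1\pi_*\mathcal{F}$ at $q$ is equivalent to surjectivity of the degree-$0$ restriction map $\pi_*\mathcal{F}\otimes_{\mathcal{O}_Z}\mathbf{k}(q)\to H^0(B_q,\mathcal{F}|_q)$; hence that map is surjective.

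Next I would apply CBC in degree $i=0$ to this surjective map. CBC upgrades it to an isomorphism and, crucially, guarantees that it stays an isomorphism for all $q'$ in an open neighborhood of $q$ — this is precisely the asserted identity $\pi_*\mathcal{F}|_{q'}=H^0(B_{q'},\mathcal{F}|_{q'})$. For the flatness of $\pi_*\mathcal{F}$ itself, I would invoke the flatness equivalence of CBC once more at $i=0$: flatness of $R^0\pi_*\mathcal{F}=\pi_*\mathcal{F}$ at $q$ is equivalent to surjectivity of the degree-$(-1)$ map, whose source $R^{-1}\pi_*\mathcal{F}$ and target $H^{-1}(B_q,\mathcal{F}|_q)$ both vanish, so $\pi_*\mathcal{F}$ is flat at $q$. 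Finally, since $\pi_*\mathcal{F}$ is coherent on the locally Noetherian scheme $Z$, its flat locus is open, so flatness at $q$ propagates to a neighborhood; intersecting this open with the one from the CBC isomorphism clause yields both assertions simultaneously.

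The routine content is entirely supplied by the quoted CBC statement, so the only place demanding genuine care is the bookkeeping at the boundary indices: recognizing that the degree-$1$ map is surjective for the trivial reason that its target is zero, and that the degree-$0$ flatness criterion holds because the degree-$(-1)$ objects vanish. I would also be careful to keep the two neighborhood-shrinking steps distinct — the ``isomorphism in a neighborhood'' clause of CBC in degree $0$ on the one hand, and the openness of the flat locus on the other — and to pass to their intersection at the end.
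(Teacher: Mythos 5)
Your proof is correct, and its core --- two applications of Cohomology and Base Change, using the target-zero surjectivity trick at $i=1$ and the vanishing of the degree $-1$ objects at $i=0$ --- is exactly the paper's skeleton. Where you genuinely diverge is in how the single-point hypothesis gets spread to a neighborhood. The paper's first move is the Semicontinuity Theorem: $H^1(B_{q'},\mathcal{F}|_{q'})=0$ for all $q'$ in a neighborhood $U$ of $q$, after which every subsequent step (vanishing of $R^1\pi_*\mathcal{F}$, surjectivity in degree $0$, flatness of $\pi_*\mathcal{F}$) is carried out uniformly at every point of $U$. You instead work entirely at $q$ --- Nakayama kills the stalk of $R^1\pi_*\mathcal{F}$ there --- and then propagate using CBC's own ``isomorphism in an open set about $q$'' clause together with the openness of the flat locus of a coherent sheaf on a locally Noetherian scheme. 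Both routes are valid. The paper's buys a slightly stronger intermediate statement ($R^1\pi_*\mathcal{F}$ vanishes on all of $U$, not just at $q$) and stays entirely inside its quoted toolkit; yours avoids semicontinuity altogether, at the cost of importing one standard fact (coherent with free stalk at $q$ implies locally free near $q$) that the paper does not list among its background results. A small remark: you could have avoided that import as well, since once CBC gives the degree-$0$ base change isomorphism at every $q'$ near $q$, the flatness equivalence at $i=0$ applies at each such $q'$ (the degree $-1$ map is trivially surjective there too), yielding flatness of $\pi_*\mathcal{F}$ on the whole neighborhood directly and collapsing your two neighborhood-shrinking steps into one.
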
  
\begin{proof}
The Semicontinuity theorem implies
that for all $q'$ in a neighborhood $U$ of $q$ we have $H^1(B_{q'}, \mathcal{F}|_{q'}) = 0,$
and so Cohomology and Base Change for $i=1$ gives
$$R^1\pi_*\mathcal{F}|_{q'} = H^1(B_{q'}, \mathcal{F}|_{q'}) = 0$$ 
in $U.$
In particular 
$R^1\pi_*\mathcal{F}$ is flat on $U.$ 
So again with $i=1$, Cohomology and Base Change gives $\pi_*\mathcal{F}|_{q'} = H^0(B_{q'}, \mathcal{F}|_{q'})$ for all $q'$ in $U$.
Since $H^{-1}(B_{q'}, \mathcal{F}|_{q'}) = 0$ the restriction map is surjective, so 
Cohomology and Base Change for $i=0$ implies $\pi_*\mathcal{F}$ is flat on $U.$ 
\end{proof}

Mumford's notion of {\bf regularity} of a sheaf leads to a practical means of knowing when one can apply the Cohomology and Base Change theorem.  In the case of quasi-coherent sheaves of ideals, this concept along with the {\bf Gotzmann regularity theorem} 
give powerful tools.

\begin{defn}(\cite{mumford-curves})
Let $\mathbf{k}$ be a field.  A coherent sheaf $\mathcal{F}$ over $\mathbf{P}_{\mathbf{k}}^n$ is said to be 
$m$-regular if 
$$
H^i(\mathbf{P}^n_\mathbf{k}, \mathcal{F}(m-i)) = 0
$$
for each $i > 0.$
\end{defn}

\begin{rem}
If one puts $H^i(\mathbf{P}^n_\mathbf{k}, \mathcal{F}(j))$ at the location $(j,i)$ in the plane, then the
non-zero locations lie either on the $x$-axis, or below the line $x+ y = m.$  Also note, that if $\mathcal{F}$
is extended to a $Z$-flat family on $\mathbf{P}^n_Z,$ for some $Z,$  then the Cohomology and Base
Change theorem can be applied to $\mathcal{F}(d)$ for $d \geq m$ as in Corollary \ref{cor:cbc}.
\end{rem}

\begin{CM}(\cite{mumford-curves})
Let $\mathcal{F}$ be an $m$-regular coherent sheaf on $\mathbf{P}^n_\mathbf{k}.$
Then
\begin{itemize}
\item $\Gamma_{\geq m}(\mathcal{F})$ is generated in degree $m$ as a $\Gamma_*(\mathcal{O}_{\mathbf{P}^n_\mathbf{k}})$-module, 
\item $H^i(\mathbf{P}^n_\mathbf{k}, \mathcal{F}(d)) = 0$ whenever $d \geq m-i,$ and
\item each $\mathcal{F}(d)$ for $d \geq m$ is generated by its global sections.  
\end{itemize} 
\end{CM}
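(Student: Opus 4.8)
The plan is to induct on the dimension $n$, the base case $n=0$ being immediate since on $\operatorname{Spec}\mathbf{k}$ every coherent sheaf has vanishing higher cohomology, is globally generated, and the multiplication maps are isomorphisms. For the inductive step I would first reduce to the case that $\mathbf{k}$ is infinite: since cohomology commutes with the flat base change $\mathbf{k} \hookrightarrow \mathbf{K}$ to an infinite extension, the vanishing conditions defining $m$-regularity, as well as global generation and generation of $\Gamma_{\geq m}$, are all preserved and reflected, so no generality is lost. With $\mathbf{k}$ infinite I can choose by prime avoidance a linear form $\ell \in H^0(\mathbf{P}^n_\mathbf{k}, \mathcal{O}(1))$ whose vanishing locus $H \cong \mathbf{P}^{n-1}_\mathbf{k}$ misses every associated point of $\mathcal{F}$ (finitely many, as $\mathcal{F}$ is coherent), so multiplication by $\ell$ is injective and produces $0 \to \mathcal{F}(d-1) \xrightarrow{\ell} \mathcal{F}(d) \to \mathcal{F}_H(d) \to 0$ with $\mathcal{F}_H = \mathcal{F} \otimes \mathcal{O}_H$. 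Feeding the twists $d=m-i$ into the long exact cohomology sequence and invoking $m$-regularity of $\mathcal{F}$ shows immediately that $\mathcal{F}_H$ is $m$-regular on $\mathbf{P}^{n-1}_\mathbf{k}$, so the inductive hypothesis applies to it.

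Next I would establish the cohomology vanishing (the second bullet), that $H^i(\mathbf{P}^n_\mathbf{k}, \mathcal{F}(d)) = 0$ for $i>0$ and $d \geq m-i$, by an inner induction on $d$ with base case $d = m-i$ (the regularity hypothesis). For the step, the relevant piece of the long exact sequence reads $H^i(\mathcal{F}(d-1)) \to H^i(\mathcal{F}(d)) \to H^i(\mathcal{F}_H(d))$: the left term vanishes by the inner inductive hypothesis and the right term vanishes because $\mathcal{F}_H$ is $m$-regular and $d \geq m-i$, forcing the middle term to vanish. In particular $\mathcal{F}$ is $m'$-regular for every $m' \geq m$, and $H^1(\mathcal{F}(d)) = 0$ for all $d \geq m-1$.

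To obtain generation in degree $m$ (the first bullet) I would prove that the multiplication map $H^0(\mathcal{F}(d)) \otimes H^0(\mathcal{O}(1)) \to H^0(\mathcal{F}(d+1))$ is surjective for every $d \geq m$, again by induction on $n$. Taking global sections of $0 \to \mathcal{F}(d) \xrightarrow{\ell} \mathcal{F}(d+1) \to \mathcal{F}_H(d+1) \to 0$ and using $H^1(\mathcal{F}(d))=0$ shows the restriction $H^0(\mathcal{F}(d+1)) \to H^0(\mathcal{F}_H(d+1))$ is surjective, and similarly $H^0(\mathcal{F}(d)) \to H^0(\mathcal{F}_H(d))$ is surjective since $H^1(\mathcal{F}(d-1))=0$. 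Given $s \in H^0(\mathcal{F}(d+1))$, the inductive hypothesis writes its restriction as $\sum \bar\ell_i \bar t_i$ over $H$; lifting the $\bar\ell_i$ along the surjection $H^0(\mathcal{O}(1)) \to H^0(\mathcal{O}_H(1))$ and the $\bar t_i$ along the restriction above, the difference $s - \sum \ell_i t_i$ restricts to $0$ on $H$, hence lies in $\ell\cdot H^0(\mathcal{F}(d))$, exhibiting $s$ in the image. Surjectivity for all $d \geq m$ is precisely the assertion that $\Gamma_{\geq m}(\mathcal{F})$ is generated in degree $m$ over $\Gamma_*(\mathcal{O}_{\mathbf{P}^n_\mathbf{k}})$.

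Finally, for global generation (the third bullet) I would show $\mathcal{F}(m)$ is globally generated and then deduce the rest. Letting $\mathcal{G} \subseteq \mathcal{F}(m)$ be the image of $H^0(\mathcal{F}(m)) \otimes \mathcal{O} \to \mathcal{F}(m)$ with cokernel $\mathcal{Q}$, iterating the multiplication surjectivity gives for $e \gg 0$ the surjectivity of $H^0(\mathcal{F}(m)) \otimes H^0(\mathcal{O}(e)) \to H^0(\mathcal{F}(m+e))$; this forces $H^0(\mathcal{G}(e)) \to H^0(\mathcal{F}(m+e))$ to be an isomorphism, hence $H^0(\mathcal{F}(m+e)) \to H^0(\mathcal{Q}(e))$ to be zero, and since $H^1(\mathcal{G}(e))=0$ and $\mathcal{Q}(e)$ is globally generated for $e \gg 0$ by Serre's generation theorem, we get $\mathcal{Q}(e)=0$, so $\mathcal{Q}=0$. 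Then $\mathcal{F}(d)=\mathcal{F}(m)(d-m)$ is a quotient of the globally generated sheaf $\mathcal{O}(d-m)^N$ for $d \geq m$, hence globally generated. The hard part will be the choice of the hyperplane $H$ avoiding the associated points of $\mathcal{F}$ — which is exactly why the reduction to an infinite ground field is indispensable — together with threading the two layers of induction (on $n$, and the inner one on $d$) correctly through the diagram chase for the multiplication maps, where one must simultaneously control the two restriction maps and the linear-form restriction $H^0(\mathcal{O}(1)) \to H^0(\mathcal{O}_H(1))$.
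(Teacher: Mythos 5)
The paper does not prove this statement at all: it is quoted as a background result in the preliminaries section with a citation to Mumford's \emph{Lectures on Curves on an Algebraic Surface}, and your proposal is a correct reconstruction of exactly the classical argument given there — reduction to an infinite field, a hyperplane section avoiding the associated points of $\mathcal{F}$, the induced $m$-regularity of $\mathcal{F}_H$, the double induction on dimension and twist for the vanishing, and the surjectivity of the multiplication maps yielding generation in degree $m$ and global generation. Your handling of the key steps (using generation of $\Gamma_{\geq m}(\mathcal{F}_H)$ in degree $m$ to get step-by-step surjectivity of multiplication by linear forms, and the lifting argument through the two restriction maps) is sound, so there is no gap to report.
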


\begin{cor}
Let $Z$ be an affine Noetherian scheme.
If $\mathcal{F}$ is a $Z$-flat coherent sheaf on $\mathbf{P}_Z^n$ and $\mathcal{F}|_q$
is $m$-regular for all $q$ in $Z$, then 
\begin{itemize}
\item $\Gamma_{\geq m}(\mathcal{F})$ is $Z$-flat and  generated in degree $m$ as a $\Gamma_*(\mathcal{O}_{\mathbf{P}^n_Z})$-module, 
\item $R^i\pi_*( \mathcal{F}(d)) = 0$ whenever $d \geq m-i,$ and
\item each $\mathcal{F}(d)$ for $d \geq m$ is generated by its global sections.  
\end{itemize} 
\end{cor}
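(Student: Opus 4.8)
The plan is to reduce every assertion to the fiberwise Theorem of Castelnuovo and Mumford and then promote the fiberwise conclusions to relative ones over $Z$ using Cohomology and Base Change together with Nakayama's lemma. Concretely, for each point $q \in Z$ the sheaf $\mathcal{F}|_q$ is $m$-regular on $\mathbf{P}^n_{\mathbf{k}(q)}$, so the Theorem of Castelnuovo and Mumford supplies three fiberwise facts: $H^i(\mathbf{P}^n_{\mathbf{k}(q)}, \mathcal{F}|_q(d)) = 0$ for $i \geq 1$ and $d \geq m-i$; the module $\Gamma_{\geq m}(\mathcal{F}|_q)$ is generated in degree $m$; and $\mathcal{F}|_q(d)$ is globally generated for $d \geq m$. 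Throughout I will use that each twist $\mathcal{F}(d)$ is again coherent and $Z$-flat, so that Serre's Generation and Finiteness Theorems and Cohomology and Base Change apply to it.

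For the higher direct images I would argue as follows. Fix $i \geq 1$ and $d \geq m - i$. For every $q$ the base-change map $R^i\pi_*\mathcal{F}(d) \otimes_{\mathcal{O}_Z} \mathbf{k}(q) \to H^i(\mathbf{P}^n_{\mathbf{k}(q)}, \mathcal{F}|_q(d))$ has target $0$ by the first fiberwise fact, hence is trivially surjective; Cohomology and Base Change then promotes it to an isomorphism, giving $R^i\pi_*\mathcal{F}(d) \otimes_{\mathcal{O}_Z} \mathbf{k}(q) = 0$ at every $q$. Since $R^i\pi_*\mathcal{F}(d)$ is coherent by Serre finiteness and $Z$ is Noetherian, Nakayama's lemma forces $R^i\pi_*\mathcal{F}(d) = 0$. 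Notice this needs no induction on $i$: the vanishing of the fiber cohomology makes the surjectivity hypothesis of Cohomology and Base Change automatic.

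The degree-$0$ case is the hinge for the remaining two bullets. For $d \geq m$ the case $i=1$ of the first fiberwise fact gives $H^1(\mathbf{P}^n_{\mathbf{k}(q)}, \mathcal{F}|_q(d)) = 0$ for all $q$, so Corollary \ref{cor:cbc} shows $\pi_*\mathcal{F}(d)$ is flat — hence locally free over the Noetherian scheme $Z$ — and that base change holds, $\pi_*\mathcal{F}(d) \otimes_{\mathcal{O}_Z} \mathbf{k}(q) = H^0(\mathbf{P}^n_{\mathbf{k}(q)}, \mathcal{F}|_q(d))$. Flatness of every graded piece makes $\Gamma_{\geq m}(\mathcal{F}) = \bigoplus_{d \geq m} \pi_*\mathcal{F}(d)$ a $Z$-flat module, which is one half of the first bullet. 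For global generation I would check that the evaluation map $\pi^*\pi_*\mathcal{F}(d) \to \mathcal{F}(d)$ is surjective: using the degree-$0$ base change just obtained, its restriction to each fiber is exactly the fiberwise evaluation map of $\mathcal{F}|_q(d)$, which is surjective by the third fiberwise fact, so the coherent cokernel has vanishing restriction to every fiber and is therefore zero by Nakayama. For generation in degree $m$ I would run the same pattern on the multiplication maps $H^0(\mathbf{P}^n_Z, \mathcal{O}(d-m)) \otimes \pi_*\mathcal{F}(m) \to \pi_*\mathcal{F}(d)$ for $d > m$: by the Cohomology of Projective Space the factor $H^0(\mathbf{P}^n_Z, \mathcal{O}(d-m))$ is free and commutes with base change, so reducing modulo $\mathbf{k}(q)$ turns this map into the fiberwise multiplication map, which is surjective because $\Gamma_{\geq m}(\mathcal{F}|_q)$ is generated in degree $m$; Nakayama again lifts fiberwise surjectivity to surjectivity over $Z$.

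The substance of the argument is uniform and the individual steps are short; the part demanding the most care is the bookkeeping that identifies the reductions modulo $\mathbf{k}(q)$ of the pullback and multiplication maps with the genuine fiberwise maps. That identification rests on the degree-$0$ base-change isomorphism of Corollary \ref{cor:cbc} and on the flatness of $H^0(\mathbf{P}^n_Z, \mathcal{O}(d-m))$, and it is what allows the single recurring device — fiberwise surjectivity plus Nakayama — to carry each fiberwise conclusion of the Theorem of Castelnuovo and Mumford down to its relative form.
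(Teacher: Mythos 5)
Your proposal is correct and follows essentially the same route as the paper's proof: reduce each bullet to the fiberwise Theorem of Castelnuovo and Mumford, then promote to the relative statement via Cohomology and Base Change (using Corollary \ref{cor:cbc} for the degree-zero case) and Nakayama's lemma, with the multiplication maps twisted by $\mathcal{O}(d-m)$ handling generation in degree $m$. Your write-up is in fact more explicit than the paper's in the first bullet (vanishing of higher direct images) and in identifying the reductions modulo $\mathbf{k}(q)$ with the genuine fiberwise maps, but the underlying argument is the same.
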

\begin{proof}
The second statement follows from Cohomology and Base change.  

The last can be checked considering 
$b \in \mathbf{P}_Z^n.$  Write $q = \pi(b).$  
Cohomology and Base change gives the surjection $\Gamma(\mathbf{P}^n_Z, \mathcal{F}(d)) \to \Gamma(\mathbf{P}^n_{\mathbf{k}(q)}, \mathcal{F}(d)|_q),$ and the theorem of Castelnuovo and Mumford gives the surjection
$$
\Gamma(\mathbf{P}^n_{\mathbf{k}(q)}, \mathcal{F}(d)|_q) \otimes_{\mathcal{O}_{\mathbf{P}^n_{\mathbf{k}(q)}} }
\mathbf{k}(b)
\to (\mathcal{F}(d)|_q)|_b.$$ The $\mathbf{k}(b)$-vector spaces 
$(\mathcal{F}(d)|_q)|_b$ and $\mathcal{F}(d)|_b$ equal, so we get a surjection 
$\Gamma(\mathbf{P}^n_Z, \mathcal{F}(d)) \otimes_{\mathcal{O}_{\mathbf{P}^n_{Z} }} 
\mathbf{k}(b)
\to \mathcal{F}(d)|_b,$ and can apply
Nakayama's lemma.  

The first statement requires consideration of the sheaves
 $\mathcal{O}(d),$  $\mathcal{F}(m)$ and   $\mathcal{F}(d+m).$
 Cohomology commutes with base change for all these sheaves, so we can consider 
 the question on the fiber.  The last fact we need is that
 $ (\pi_*\mathcal{O}(d) \otimes_{\mathcal{O}_Z} \pi_*\mathcal{F}(m))|_q \to 
 (\pi_*\mathcal{F}(d+m))|_q$ factors through the epimorphism 
 $$
 (\pi_*\mathcal{O}(d) \otimes_{\mathcal{O}_Z} \pi_*\mathcal{F}(m))|_q \to 
 \pi_*\mathcal{O}(d)|_q \otimes_{\mathbf{k}(q)} \pi_*\mathcal{F}(m)|_q.
 $$
We can now appeal to the theorem of Castelnuovo and Mumford and apply Nakayama's lemma.
 \end{proof}

\begin{GR}(\cite{gotzmann-1978} see also \cite[Theorem 4.3.2]{bruns-herzog})
If $L$ is a closed subscheme of $\mathbf{P}^n_\mathbf{k}$ with Hilbert polynomial $p_{\mathcal{O}_L}(t),$ then
there is a unique expansion
$$
p_{\mathcal{O}_L}(t) = { t+a_1 \choose a_1} + {t +a_2-1\choose a_2} + \dotsm + {t + a_m - (m-1)\choose a_m}
$$  
for weakly decreasing integers $a_1 \geq a_2 \geq \dotsm \geq a_m.$  Furthermore,
for $m$ in the above expansion
 $\mathcal{I}_L,$ is $m$-regular.
The integer $m$ is called the {\bf Gotzmann number} of $p_{\mathcal{O}_L}.$
\end{GR}

\begin{rem}
It is interesting to note that the Gotzmann number depends only on the polynomial.  Not even 
on the dimension of the ambient projective space.
\end{rem}

\begin{cor}
Consider a  $B$-flat closed subscheme $L$ of $\mathbf{P}^n_B$ 
with Hilbert polynomial $p_{\mathcal{O}_L}$ and Gotzmann number $m.$  The sheaves
$\pi_*{\mathcal{I}_L}$ and $\pi_*\mathcal{O}_L$ are $m$-regular, 
and
$$
0 \to \pi_*{\mathcal{I}_L(d)} \to \pi_*\mathcal{O}_{\mathbf{P}^n_B}(d) \to \pi_*\mathcal{O}_L(d) \to 0
$$ 
is an exact sequence of $Z$-flat sheaves. 
\end{cor}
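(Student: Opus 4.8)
The plan is to reduce every assertion to the fibres of $L$ over the base and then feed the outcome into the preceding corollary. I write the base as $B$ throughout (so that ``$Z$-flat'' means flat over $B$), and since flatness, the vanishing of higher direct images, and exactness of a sequence of sheaves are all local on the base, I may assume $B = \operatorname{Spec} A$ is affine Noetherian in order to apply that corollary.

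First I would establish the flatness of $\mathcal{I}_L$ and identify its fibres. From the structure sequence
$$
0 \to \mathcal{I}_L \to \mathcal{O}_{\mathbf{P}^n_B} \to \mathcal{O}_L \to 0
$$
the sheaf $\mathcal{O}_L$ is relatively flat by hypothesis and $\mathcal{O}_{\mathbf{P}^n_B}$ is relatively flat, so the proposition on short exact sequences of relatively flat sheaves forces $\mathcal{I}_L$ to be relatively flat as well. Because $\mathcal{O}_L$ is $B$-flat, restricting the structure sequence to a fibre $\mathbf{P}^n_{\mathbf{k}(q)}$ keeps it exact; hence $\mathcal{I}_L|_q$ is the ideal sheaf $\mathcal{I}_{L_q}$ of the fibre $L_q \subseteq \mathbf{P}^n_{\mathbf{k}(q)}$. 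Relative flatness of $L$ makes the Hilbert polynomial of $L_q$ equal to $p_{\mathcal{O}_L}$ for every $q$, so each $L_q$ has the one Gotzmann number $m$, and Gotzmann Regularity gives that $\mathcal{I}_{L_q}$ is $m$-regular for all $q$.

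Next I would verify that $\mathcal{O}_{L_q}$ is also $m$-regular. The sheaf $\mathcal{O}_{\mathbf{P}^n_{\mathbf{k}(q)}}$ is $0$-regular by The Cohomology of Projective Space (the middle line-bundle cohomologies vanish, and $H^n(\mathbf{P}^n,\mathcal{O}(-n))=0$), and since regularity is monotone in $m$ by the theorem of Castelnuovo and Mumford, it is $m$-regular for $m \geq 0$. Feeding the $m$-regularity of $\mathcal{O}_{\mathbf{P}^n_{\mathbf{k}(q)}}$ and of $\mathcal{I}_{L_q}$ into the long exact cohomology sequence of the fibre structure sequence twisted by $\mathcal{O}(m-i)$ kills both neighbours of $H^i(\mathcal{O}_{L_q}(m-i))$ for each $i>0$, so $\mathcal{O}_{L_q}$ is $m$-regular. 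This settles the first assertion, that $\mathcal{I}_L$ and $\mathcal{O}_L$ are fibrewise $m$-regular.

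Finally I would assemble the exact sequence. All three sheaves $\mathcal{I}_L$, $\mathcal{O}_{\mathbf{P}^n_B}$, $\mathcal{O}_L$ are now $B$-flat with $m$-regular fibres, so the preceding corollary applies to each: every $\Gamma_{\geq m}$ is $B$-flat, whence each direct image $\pi_*\mathcal{I}_L(d)$, $\pi_*\mathcal{O}_{\mathbf{P}^n_B}(d)$, $\pi_*\mathcal{O}_L(d)$ is $B$-flat for $d \geq m$ as a summand of a flat module, and moreover $R^1\pi_*\mathcal{I}_L(d)=0$ for $d \geq m-1$. Pushing forward the twisted structure sequence $0 \to \mathcal{I}_L(d) \to \mathcal{O}_{\mathbf{P}^n_B}(d) \to \mathcal{O}_L(d) \to 0$ yields a long exact sequence whose left-hand portion is the claimed three-term sequence: exact on the left because $\pi_*$ is left exact, and exact on the right because the obstruction $R^1\pi_*\mathcal{I}_L(d)$ vanishes for $d \geq m$. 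I expect the only real friction to be the uniformity over $B$, namely that a single $m$ works for every fibre; this is exactly guaranteed by the remark that the Gotzmann number depends only on the Hilbert polynomial, together with the constancy of $p_{\mathcal{O}_{L_q}}$ in a flat family. The transfer of regularity from $\mathcal{I}_{L_q}$ to $\mathcal{O}_{L_q}$ is then routine cohomological diagram-chasing once $\mathcal{O}_{\mathbf{P}^n}$ is seen to be $0$-regular.
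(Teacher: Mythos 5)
Your proposal is correct and follows essentially the same route as the paper's proof: reduce to the fibres, invoke Gotzmann regularity to get fibrewise $m$-regularity of the ideal sheaf, and then let Cohomology and Base Change (which you access through the preceding corollary rather than directly) deliver the vanishing of $R^{i}\pi_{*}\mathcal{I}_L(d)$, the flatness of the direct images, and hence the exactness of the pushed-forward sequence. The only difference is one of completeness: you spell out steps the paper leaves implicit, namely the relative flatness of $\mathcal{I}_L$ via the two-out-of-three proposition, the transfer of $m$-regularity from $\mathcal{I}_{L_q}$ and $\mathcal{O}_{\mathbf{P}^n}$ to $\mathcal{O}_{L_q}$ by the long exact sequence, and the final assembly of the three-term sequence from $R^1\pi_{*}\mathcal{I}_L(d)=0$.
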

\begin{proof}
$\mathcal{O}_L$ is $B$-flat, so after pull back to $\mathbf{P}^n_{\mathbf{k}(q)}$
the sequence 
$$
0 \to \mathcal{I}_L|_q \to \mathcal{O}_{\mathbf{P}^n_\mathbf{k}(q)} \to \mathcal{O}_L|_q \to 0
$$
is exact.  Here Gotzmann regularity implies $H^i( \mathbf{P}^n_{\mathbf{k}(q)}, \mathcal{I}_L|_q(d)) = 0$
for all $i >0.$
These groups can be computed by the same \v{C}ech-complex as the $R^i\pi_*( \mathcal{I}_L(d)  \otimes_{\mathcal{O}_{\mathbf{P}^n_B}} \mathbf{k}(q))$'s, and are thus the same. 
So we may apply Cohomology and Base Change to conclude 
 $R^i\pi_*( \mathcal{I}_L(d)) = 0$ for $i >0.$  Consequently, both $\pi_* \mathcal{I}_L(d)$
  and $\pi_*\mathcal{O}_L(d)$
are flat.
\end{proof}

\begin{notation}
When we have a projective $Z$-scheme $B$ and a sheaf $\mathcal{F}$ on $B,$
we will say some version of the statement 
\begin{center}
``The Hilbert polynomial of $\mathcal{F}$ is independent of $Z.$''
\end{center}
to indicate that 
there is a fixed polynomial that  equals the Hilbert polynomial of $\mathcal{F}|_q$ regardless of the choice of point $q \in Z$.
\end{notation}

Later (Lemma \ref{lem:reduced-hilbert-constant-flat}), we will need to generalize the following theorem to reduced schemes.

\begin{HPRF}(\cite[proof of Theorem III.9.9]{hartshorne-1977})
If $Z$ is an integral Noetherian scheme.  Let $\mathcal{F}$ be a coherent sheaf on  a projective $Z$-scheme
$B$.  Then $\mathcal{F}$ is $Z$-flat if and only if Hilbert polynomial of $\mathcal{F}|_q$ is independent of $q \in Z$.
\end{HPRF}

\subsection{Representability of certain functors}
We will use certain schemes in a way that makes it convenient to think of them in terms of the functors they represent.  Specifically, {\bf flattening stratifications}, {\bf Quot schemes}, {\bf Hilbert schemes}, and {\bf scheme theoretic images}.

\begin{Flattening}(\cite{grothendieck-hilbert})
If $B \to Z$ is projective and  $\mathcal{F}$ is a coherent $\mathcal{O}_B$
module, then there  is $Z$-scheme  $Z^\mathcal{F}_{\text{flat}} \to Z$ which represents the functor 
$$
Z' \mapsto \{ h \colon Z' \to Z \ | \ h^*\mathcal{F} \text{ is $Z'$-flat}\}.
$$
Furthermore, $Z^\mathcal{F}_{\text{flat}}$ is a disjoint union of locally closed subschemes of $Z$
called {\bf strata}, one of which is topologically open and dense in $Z.$
\end{Flattening}

\begin{notation}
If $\mathcal{F}$ is a sheaf over a projective $Z$-scheme and $p_\mathcal{F}= p_{\mathcal{F}}(t)$ 
is a polynomial in $t$, we write
$Z_{p_{\mathcal{F}}}$ for the disjoint union of locally closed subschemes over which $\mathcal{F}$ is flat and has Hilbert polynomial $p_{\mathcal{F}}$.
 This is potentially confusing since suggests 
that $p_\mathcal{F}$ depends on $\mathcal{F}.$  However, this notation should simply indicate that we 
are introducing a  polynomial  $p_\mathcal{F}$ that we wish to associate with the sheaf $\mathcal{F}.$
\end{notation}

\begin{Quot}  (\cite{grothendieck-hilbert}) Given a coherent sheaf $\mathcal{F}$ over a projective $Z$-scheme $B$ the functor
$$
Z' \mapsto   \{ \text{$Z'$-flat quotients $\mathcal{G}$ of $\pi_B^*\mathcal{F}$ on $Z' \times_Z B$} \} 
$$
is representable by a projective $B$-scheme 
$\operatorname{Quot}(\mathcal{F}/B/Z).$
\end{Quot}

\begin{rem}
From this we have the Hilbert scheme which is $\operatorname{Hilb}(B/Z) = 
\operatorname{Quot}(\mathcal{O}_B/B/Z).$
\end{rem}

\begin{STI}(\cite[Tag 01R5]{stacks-project})
Given a morphism of schemes $\phi \colon V \to W.$ There exists a closed subscheme $\overline{\phi(V)} \subseteq W$ 
called the {\bf scheme theoretic image} such that $\phi$ factors through  $\overline{\phi(V)}$ and $\overline{\phi(V)}$
is initial among such closed subschemes of $W.$
\end{STI}

\section{The Moduli of Quasi-Splines Sheaves}
\label{section:moduli-quasi-spline-sheaves}

In this section, we construct in Theorem \ref{thm:cfqs} the moduli of cokernel-flat families of quasi-spline sheaves ${CFQS}.$
The functor
represented by ${CFQS}$ is
$$
\mathcal{CFQS}(Z) = \{ \mathcal{S} \in \mathcal{QS}(Z) \ | \ \text{$\mathcal{G}=\operatorname{cok}(\mathcal{S} \to \mathcal{O}_{Z \times_T Y}^s)$ is $Z-$flat}  \}.
$$
Where $\mathcal{QS}$ is the functor of families of quasi-spline schemes from the introduction.  

When $Y$ is $T$-flat  we have Theorem \ref{thm:main} which states
the the existence of the scheme $QS = CFQS$ representing $\mathcal{QS}.$  This is based on
Lemma \ref{lem:flat-cok} makes the observation that a quasi-spline sheaf $\mathcal{S}$ over projective, flat $Z$-scheme $B$
is a $Z$-family if and only if the cokernel $\mathcal{G}$ of the inclusion $\mathcal{S} \to \mathcal{O}_B^s$
is $Z$-flat.

\begin{defn}
A {\bf sheaf of quasi-splines} over a scheme $B$ is a quasi-coherent $\mathcal{O}_B$-subalgebra of 
$\mathcal{O}_B^s.$
\end{defn}

\begin{defn}
A {\bf $Z$-family of quasi-spline sheaves over a $Z$-scheme $B$} 
to be
\begin{itemize}
\item a sheaf of quasi-splines $\mathcal{S}$
over a  $B$ such that 
\item for any morphism $f\colon Z' \to Z$, the pullback 
$\pi_B^*\mathcal{S}$
is a sheaf of quasi-splines over $Z' \times_Z B.$
\end{itemize}
\end{defn}

\begin{defn}
We say that a $Z$-family $\mathcal{S}$ of quasi-spline sheaves over $B$ is 
{\bf cokernel-flat} if the sheaf $\mathcal{G}$ is the exact sequence
$$
0 \to \mathcal{S} \to \mathcal{O}^s_B \to \mathcal{G} \to 0
$$
is $Z$-flat.
\end{defn}

\begin{lem}
\label{lem:zero-morphism}
Let $\phi \colon \mathcal{F} \to \mathcal{G}$ be a morphism of coherent sheaves over a projective $Z$-scheme $B.$
If $\mathcal{G}$ is $Z$-flat, then the functor $Z' \mapsto \{ h \in \operatorname{Mor}_T(Z', Z) \ | \ 
h^*\phi = 0\}$
is representable by a closed subscheme $\mathbf{V}(\phi) \subseteq Z.$ 
\end{lem}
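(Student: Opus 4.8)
The plan is to reduce the vanishing of $\phi$ after base change to the vanishing of a single morphism of coherent sheaves on the base $Z$, and then to recognize the latter as a closed condition. Since $\pi\colon B\to Z$ is projective, Serre's Generation and Finiteness Theorem lets me fix one integer $d\gg 0$ with two properties: $\mathcal{F}(d)$ is generated by global sections, so the counit $\pi^*\pi_*\mathcal{F}(d)\to\mathcal{F}(d)$ is surjective; and, using that $\mathcal{G}$ is $Z$-flat, $R^{>0}\pi_*\mathcal{G}(d)=0$ and, by Cohomology and Base Change (cf. Corollary \ref{cor:cbc}), $\pi_*\mathcal{G}(d)$ is locally free with formation commuting with arbitrary base change $h\colon Z'\to Z$. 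Write $\mathcal{A}=\pi_*\mathcal{F}(d)$, $\mathcal{B}=\pi_*\mathcal{G}(d)$, and $\Phi=\pi_*\phi(d)\colon\mathcal{A}\to\mathcal{B}$; for a base change $h$ denote the two projections of $B_{Z'}$ by $\pi'\colon B_{Z'}\to Z'$ and $h_B\colon B_{Z'}\to B$, and (following the paper's convention) write $h^*\phi$ for the pullback $h_B^*\phi$ on $B_{Z'}$.

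The key reduction is the equivalence, valid for every $h\colon Z'\to Z$, that $h^*\phi=0$ if and only if $h^*\Phi=0$. For the ``if'' direction I pull the naturality square of the counit back along $h_B$; pullback preserves the surjectivity of $\pi^*\pi_*\mathcal{F}(d)\to\mathcal{F}(d)$, and $\pi'^*(h^*\Phi)=0$, so commutativity forces $h^*\phi(d)=0$, hence $h^*\phi=0$. Note that no flatness of $\mathcal{F}$ is used here. For the ``only if'' direction I use the naturality square relating the base-change maps $h^*\pi_*\mathcal{F}(d)\to\pi'_*(h_B^*\mathcal{F}(d))$ and $h^*\pi_*\mathcal{G}(d)\to\pi'_*(h_B^*\mathcal{G}(d))$: the arrow for $\mathcal{G}(d)$ is an isomorphism by the choice of $d$, while $\pi'_*(h^*\phi(d))=0$, so the square forces $h^*\Phi=0$. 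This is exactly the step where $Z$-flatness of $\mathcal{G}$ is essential.

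It remains to represent $Z'\mapsto\{h\mid h^*\Phi=0\}$, where $\Phi\colon\mathcal{A}\to\mathcal{B}$ is a morphism of coherent sheaves with $\mathcal{B}$ locally free. I take $\mathbf{V}(\phi)$ to be the closed subscheme of $Z$ cut out by the quasi-coherent ideal
$$
\mathcal{I}_\phi=\operatorname{im}\bigl(\mathcal{A}\otimes_{\mathcal{O}_Z}\mathcal{B}^\vee \xrightarrow{\Phi\otimes 1}\mathcal{B}\otimes_{\mathcal{O}_Z}\mathcal{B}^\vee\xrightarrow{\operatorname{ev}}\mathcal{O}_Z\bigr).
$$
Over an affine $\operatorname{Spec}R\subseteq Z$ on which $\mathcal{B}$ is free and $\mathcal{A}$ is generated by finitely many sections, $\mathcal{I}_\phi$ is the ideal generated by all coordinate entries of the $\Phi$-images of those generators. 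Since $\mathcal{B}^\vee$ commutes with base change, for any $h\colon Z'\to Z$ the pullback $h^*\Phi$ vanishes precisely when all these entries pull back to zero, that is, precisely when $h$ factors through $\mathbf{V}(\phi)=V(\mathcal{I}_\phi)$. Combined with the reduction of the previous paragraph, this exhibits $\mathbf{V}(\phi)$ as the desired representing closed subscheme.

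The main obstacle is precisely that $\mathcal{F}$ carries no flatness hypothesis, so $\pi_*\mathcal{F}(d)$ need not commute with base change; the argument is arranged so that only global generation of $\mathcal{F}(d)$, which is preserved by pullback, is used on the $\mathcal{F}$-side, while every base-change input comes from the flat sheaf $\mathcal{G}$. A minor point is that, $Z$ being only locally Noetherian, the choice of $d$ and the formation of $\mathcal{I}_\phi$ are carried out over Noetherian affine opens; since a closed subscheme representing a given functor is unique, these local pieces agree on overlaps and glue to the global $\mathbf{V}(\phi)$.
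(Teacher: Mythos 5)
Your proof is correct and follows essentially the same route as the paper's: twist by a large $d$ so that $\mathcal{F}(d)$ is globally generated and, via flatness of $\mathcal{G}$ and Cohomology and Base Change, $\pi_*\mathcal{G}(d)$ is locally free and commutes with base change, then cut out the locus where the matrix coefficients $c_{ij}$ of $\pi_*\phi(d)$ vanish. The only differences are presentational: you globalize the paper's local coordinate computation into the ideal $\mathcal{I}_\phi = \operatorname{im}(\mathcal{A}\otimes\mathcal{B}^\vee \to \mathcal{O}_Z)$ and make explicit, via the two naturality squares, the equivalence $h^*\phi = 0 \Leftrightarrow h^*\pi_*\phi(d) = 0$ that the paper leaves implicit.
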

\begin{proof}
It suffices to work locally on $Z$ and assume that $B \subseteq \mathbf{P}^n_Z.$  
Provided $d$ is sufficiently large, $\mathcal{F}(d)$ is generated by global sections and $\Gamma(B, \mathcal{G}(d))$
is a flat $\mathcal{O}_Z$-module.  Consider the image 
under $\phi(d) \colon \Gamma(B, \mathcal{F}(d)) \to \Gamma(B, \mathcal{G}(d))$
of generators $\{f_i\}_i \subseteq \Gamma(B, \mathcal{F}(d)).$  Since $Z$ is local,
$\Gamma(B, \mathcal{G}(d))$ is free and we can choose a basis $\{g_j\}_j.$
For each $f_i$ we have an expansion 
$$
\phi(f_i) = \sum_j c_{ij} g_j.
$$
The condition that $\phi = 0$ is the same as $c_{ij} = 0$ for all $ij.$
So we set $\mathbf{V}(\phi) = \mathbf{V}(\{c_{ij}\}).$  After any base change, $\mathcal{F}(d)$
is still generated by the $f_i$'s and the cohomology and base change theorem implies that 
the $g_j$'s remain linearly independent.  So the vanishing of $\phi$ is exactly the condition that the
$c_{ij}$'s vanish.
\end{proof}

\begin{defn}
\label{defn:cm}
Fix a scheme $B$ and consider a quasi-coherent subsheaf $\mathcal{S}$ of $\mathcal{O}_B^s.$
Write $\gamma \colon \mathcal{O}_B^s \to \mathcal{G}$ for the cokernel of the inclusion $\iota \colon \mathcal{S} \to \mathcal{O}_B^s.$
 Write $\delta \colon \mathcal{O}_B \to \mathcal{O}_B^s$
for the diagonal inclusion and $\mu \colon \mathcal{O}_B^s \otimes_{\mathcal{O}_B} \mathcal{O}_B
\to \mathcal{O}_B^s$ for entry-wise multiplication.  We define 
\begin{itemize}
\item $ \kappa \colon \mathcal{O}_B \to \mathcal{G}$ to be the composition $\gamma \circ \delta,$ and
\item $m \colon \mathcal{S} \otimes_\mathcal{O} \mathcal{S} \to \mathcal{G}$ to be 
$\gamma \circ \mu \circ (\iota \otimes \iota).$
\end{itemize}
\end{defn}

\begin{lem}
\label{lem:cm}
A quasi-coherent subsheaf $\mathcal{S}$ of $\mathcal{O}_B^s$ is a quasi-spline sheaf if and only if
$$
\text{ $ \kappa = 0$ and $m = 0.$ }
$$
for the maps $ \kappa$ and $m$ of Definition \ref{defn:cm}.
\end{lem}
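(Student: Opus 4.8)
The plan is to translate both conditions directly into the defining axioms of a subalgebra, using only that $\gamma$ is the cokernel of $\iota$. First I would record the short exact sequence
$$
0 \to \mathcal{S} \xrightarrow{\iota} \mathcal{O}_B^s \xrightarrow{\gamma} \mathcal{G} \to 0,
$$
which identifies $\mathcal{S}$ with $\ker \gamma$ as a subsheaf of $\mathcal{O}_B^s$. The single observation driving the whole argument is that for any morphism $\phi \colon \mathcal{A} \to \mathcal{O}_B^s$ of quasi-coherent sheaves one has $\gamma \circ \phi = 0$ if and only if the image of $\phi$ is contained in $\mathcal{S}$. This I would check on stalks (or on sections over an affine cover), where it is the elementary statement that a map into a module composes to zero with the quotient map exactly when its image lands in the corresponding submodule.

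Next I would apply this observation to the two maps of Definition \ref{defn:cm}. For $\kappa = \gamma \circ \delta$ it gives $\kappa = 0$ if and only if $\delta(\mathcal{O}_B) \subseteq \mathcal{S}$, i.e.\ $\mathcal{S}$ contains the diagonal copy of $\mathcal{O}_B$; since $\mathcal{S}$ is an $\mathcal{O}_B$-submodule, this is the same as the unit $\delta(1) = (1, \ldots, 1)$ of $\mathcal{O}_B^s$ lying in $\mathcal{S}$. For $m = \gamma \circ \mu \circ (\iota \otimes \iota)$ the observation gives $m = 0$ if and only if the image of $\mu \circ (\iota \otimes \iota)$ lies in $\mathcal{S}$; unwinding this on sections, it says exactly that the entry-wise product of any two local sections of $\mathcal{S}$ again lies in $\mathcal{S}$, i.e.\ $\mathcal{S}$ is closed under multiplication.

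Finally I would assemble these equivalences. By definition a quasi-coherent subsheaf $\mathcal{S} \subseteq \mathcal{O}_B^s$ is a quasi-spline sheaf precisely when it is an $\mathcal{O}_B$-subalgebra, that is, an $\mathcal{O}_B$-submodule containing the unit and closed under multiplication. As $\mathcal{S}$ is already an $\mathcal{O}_B$-submodule by hypothesis, the two remaining algebra axioms are exactly the conditions $\kappa = 0$ and $m = 0$ obtained above, which yields the asserted equivalence.

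Since the argument is essentially a translation of the subalgebra axioms into the vanishing of two canonical maps, I expect no serious obstacle. The only point meriting care is the passage from the sheaf-theoretic statement $\gamma \circ \phi = 0$ to the module-theoretic statement that the image of $\phi$ lies in $\mathcal{S}$, which I would justify by exactness of the displayed sequence together with a stalkwise check.
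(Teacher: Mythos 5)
Your proposal is correct and follows essentially the same argument as the paper: the vanishing of $\kappa$ and $m$ is equivalent to $\delta$ and $\mu \circ (\iota \otimes \iota)$ factoring through $\ker\gamma = \mathcal{S}$, which translates precisely into the subalgebra axioms (containing the diagonal and closure under entry-wise multiplication). Your stalkwise justification of the factoring criterion simply spells out a step the paper leaves implicit.
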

\begin{proof}
$ \kappa =0$  and $m=0$ if and only if they factor through the kernel of $\gamma.$  So
both  $\delta$ and $\mu \circ (\iota \otimes \iota)$ factor though $\mathcal{S}.$  This 
 means precisely that $\mathcal{O}_B \subseteq \mathcal{S}$
and $\mathcal{S}$ is closed under entry-wise multiplication in $\mathcal{O}_B^s.$
\end{proof}

\begin{defn}
Consider the Quot scheme $Q = \operatorname{Quot}(\mathcal{O}_Y^s/Y/T)$
 and the map 
$$
\phi  = \kappa \oplus m \colon \mathcal{O}_{Q \times_T Y} \oplus (\mathcal{S} \otimes_{\mathcal{O}_{Q \times_T Y}} \mathcal{S}) \to \mathcal{G} \oplus \mathcal{G}.
$$
where $ \kappa$ and $m$ are the maps from Definition \ref{defn:cm} for the universal kernel $\mathcal{S} \subseteq \mathcal{O}_{Q \times_T Y}^s.$
We set
$$
{CFQS} = \mathbf{V}(\phi) \subseteq Q
$$
as in Lemma \ref{lem:zero-morphism}. 
\end{defn}

\begin{thm}
\label{thm:cfqs}
The functor $\mathcal{CFQS}$ is represented by  ${CFQS}.$ 
\end{thm}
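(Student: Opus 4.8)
The plan is to identify $\mathcal{CFQS}$ with the subfunctor of the Quot functor cut out by the vanishing of $\phi$, and then to invoke Lemma \ref{lem:zero-morphism}. First I would use Representability of the Quot Functor to identify, naturally in $Z$, the set $\operatorname{Mor}_T(Z, Q)$ with the set of $Z$-flat quotients $\gamma \colon \mathcal{O}^s_{Z\times_T Y} \to \mathcal{G}$, equivalently (taking kernels) with subsheaves $\mathcal{S} = \ker\gamma \subseteq \mathcal{O}^s_{Z \times_T Y}$ whose cokernel $\mathcal{G}$ is $Z$-flat. Under this dictionary, an object of $\mathcal{CFQS}(Z)$ is exactly a point of $Q(Z)$ whose associated kernel $\mathcal{S}$ is moreover a quasi-spline sheaf that remains one after every base change.

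The crux is that the quasi-spline condition is cut out, compatibly with base change, by the single map $\phi$. By Lemma \ref{lem:cm}, for fixed $Z$ the kernel $\mathcal{S}$ is a quasi-spline sheaf if and only if $\kappa = 0$ and $m = 0$, i.e. $\phi = 0$. I would then check that the universal $\phi$ on $Q$ pulls back along $h \colon Z \to Q$ to the $\phi$ of the family on $Z$. Here the flatness of $\mathcal{G}$ does the real work: because $\mathcal{G}$ is $Z$-flat, the sequence $0 \to \mathcal{S} \to \mathcal{O}^s \to \mathcal{G} \to 0$ stays exact after any pullback, so $h^*\mathcal{S} = \ker(h^*\gamma)$, the inclusion $\iota$ is a universal inclusion, and each of the structural maps $\delta$, $\mu$, $\gamma$ commutes with pullback. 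Consequently $h^*\phi$ is exactly the $\phi$ attached to $\mathcal{S}$ on $Z$, and $h^*\phi = 0$ iff $\mathcal{S}$ is a quasi-spline sheaf.

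The family condition then comes for free, which is why no flatness hypothesis on $Y$ is needed. If $\phi = 0$ over $Z$, then for every further base change $g \colon Z' \to Z$ we have $g^*\phi = g^*(0) = 0$, so by Lemma \ref{lem:cm} the pullback $\mathcal{S}|_{Z'}$ is again a quasi-spline sheaf; thus $\mathcal{S}$ is a genuine $Z$-family and lies in $\mathcal{CFQS}(Z)$. Conversely a family is in particular a quasi-spline sheaf over $Z$ itself, forcing $\phi = 0$. Therefore $\mathcal{CFQS}(Z)$ is identified, naturally in $Z$, with $\{h \in \operatorname{Mor}_T(Z, Q) \mid h^*\phi = 0\}$. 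Since the target $\mathcal{G} \oplus \mathcal{G}$ of $\phi$ is $Q$-flat (it is the universal quotient, flat by construction of $Q$) and the source is coherent, Lemma \ref{lem:zero-morphism} applies and represents this last functor by the closed subscheme $\mathbf{V}(\phi) = {CFQS} \subseteq Q$.

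I expect the main obstacle to be the base-change compatibility of the second paragraph: verifying that the universal $\phi$ restricts on each fiber to the $\phi$ of the corresponding family, which forces one to use the flatness of $\mathcal{G}$ both to commute formation of the kernel $\mathcal{S}$ with pullback and to know that the source $\mathcal{S} \otimes \mathcal{S}$ and the map $\iota \otimes \iota$ behave correctly under $h^*$. Once that compatibility is in hand, everything else is a formal consequence of Quot representability and Lemma \ref{lem:zero-morphism}.
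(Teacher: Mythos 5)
Your proposal is correct and follows the paper's route in its essentials: identify $\mathcal{CFQS}$ with the subfunctor of the Quot functor cut out by $\phi = \kappa \oplus m$, characterize the quasi-spline condition by Lemma \ref{lem:cm}, establish that $\kappa$ and $m$ commute with base change (the paper does this through the explicit identification $h^*(\mathcal{S} \otimes \mathcal{S}) \cong h^*\mathcal{S} \otimes h^*\mathcal{S}$, which holds precisely because the $Z$-flatness of $\mathcal{G}$ makes $h^*\mathcal{S}$ the kernel of $h^*\gamma$), and conclude by Lemma \ref{lem:zero-morphism}. The one step where you genuinely differ is in showing that a $Z$-point of $\mathbf{V}(\phi)$ yields an honest family: you observe that $\phi = 0$ persists under every further base change $Z' \to Z$, so Lemma \ref{lem:cm} applies over each $Z'$, whereas the paper cites Lemma \ref{lem:flat-cok} for this. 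Your variant is cleaner on this point: Lemma \ref{lem:flat-cok} is stated for a \emph{flat} projective $Z$-scheme $B$, a hypothesis unavailable when $Y$ is not $T$-flat, while your argument uses only the $Z$-flatness of $\mathcal{G}$ (to keep $\mathcal{S}|_{Z'}$ a subsheaf of $\mathcal{O}^s$ after pullback) together with the same base-change compatibility of $\kappa$ and $m$ already established. This makes transparent why $CFQS$ exists with no flatness assumption on $Y$, which is the whole reason for separating $\mathcal{CFQS}$ from $\mathcal{QS}$ in the first place.
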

\begin{proof}
For any such family we know by Lemma \ref{lem:flat-cok} that the cokernel $\mathcal{G}$ of the inclusion
$\mathcal{S} \to \mathcal{O}^s_{Z\times_TY}$ is $Z$-flat.  Furthermore, the quasi-spline sheaf
is determined by the map $\mathcal{O}^s_{Z\times_TY} \to \mathcal{G}.$
This means that there is a natural transformation 
from this functor into the Quot scheme.

The identification
$$\mathcal{S} \otimes_{\mathcal{O}_{Q \times_T Y}} \mathcal{S} \otimes_{\mathcal{O}_{Q \times Y}} \mathcal{O}_{Z \times_TY} 
\cong 
(\mathcal{S} \otimes_{\mathcal{O}_{Q \times_T Y}} \mathcal{O}_{Z \times_T Y})
\otimes_{\mathcal{O}_{Z \times_T Y}} (\mathcal{S} \otimes_{\mathcal{O}_{Q \times_T Y}} \mathcal{O}_{Z \times_T Y})
$$
shows that the $ \kappa$ and $m$ maps of Definition \ref{defn:cm} over $Q \times_T Y$ pull back to the $ \kappa$ and $m$ maps over $Z \times T.$
For any quasi-spline sheaf  over $Z \times T$ these maps vanish by Lemma \ref{lem:cm},
so the morphism factors through ${CFQS}$ by Lemma \ref{lem:zero-morphism}. 

On the other hand, the universal kernel $\mathcal{S}$ restricted to ${CFQS}$
is a quasi-spline sheaf, again by Lemma \ref{lem:cm}.  Lemma \ref{lem:flat-cok} guarantees this sheaf is a ${CFQS}$-family.
 Consequently, points in ${CFQS}(Z)$ produce distinct $Z$-families of quasi-spline sheaves over
$Z \times_T Y$, and so the natural transformation is a bijection.
\end{proof}

\begin{lem}
\label{lem:flat-cok}
\label{lem:yfcf}
Given a flat, projective $Z$-scheme $B$, a quasi-spline sheaf $\mathcal{S}$ is a 
$Z$-family over $B$ if and only if the cokernel $\mathcal{G}$
of the inclusion $\mathcal{S} \to \mathcal{O}_B^s$ is $Z$-flat. Thus 
if $Y$ is flat over $T$, then 
$\mathcal{QS}= 
\mathcal{CFQS}.$
\end{lem}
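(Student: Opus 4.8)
The plan is to reduce the stated equivalence to a statement purely about the inclusion $\iota\colon \mathcal{S}\to\mathcal{O}_B^s$ — namely that $\mathcal{S}$ is a $Z$-family if and only if $\iota$ is a universal inclusion — and then to analyze the universal-ness of $\iota$ through its cokernel $\mathcal{G}$. First I would unwind the definition of a $Z$-family. For any $f\colon Z'\to Z$ there is a natural map $\pi_B^*\mathcal{S}\to\mathcal{O}_{B_{Z'}}^s$, and requiring that $\pi_B^*\mathcal{S}$ be a sheaf of quasi-splines over $B_{Z'}$ forces this map to be injective. Conversely, once it is injective, $\pi_B^*\mathcal{S}$ is a genuine subsheaf of $\mathcal{O}_{B_{Z'}}^s$, and the maps $\kappa$ and $m$ of Definition \ref{defn:cm} — which vanish for $\mathcal{S}$ by Lemma \ref{lem:cm} — pull back to the corresponding maps for $\pi_B^*\mathcal{S}$ exactly as in the proof of Theorem \ref{thm:cfqs}, hence also vanish; Lemma \ref{lem:cm} applied over $B_{Z'}$ then shows $\pi_B^*\mathcal{S}$ is a subalgebra. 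Thus ``$\mathcal{S}$ is a $Z$-family'' is precisely ``$\iota$ is a universal inclusion.''

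The heart of the argument is the equivalence ``$\iota$ is a universal inclusion'' $\Leftrightarrow$ ``$\mathcal{G}$ is $Z$-flat.'' Here I would work locally, writing $Z=\operatorname{Spec}R$ and an affine patch of $B$ as $\operatorname{Spec}A$ with $A$ flat over $R$, so that the relative-flatness criterion of the earlier proposition lets me test everything on modules. Tensoring $0\to S\to A^s\to G\to 0$ with a base change $R\to R'$ and noting that $\operatorname{Tor}_1^R(A^s,R')=0$ — because $A$, and hence $A^s$, is $R$-flat — yields an identification $\operatorname{Tor}_1^R(G,R')\cong\ker\!\bigl(S\otimes_R R'\to (A\otimes_R R')^s\bigr)$, the right-hand kernel being exactly the failure of $\iota$ to remain injective after the base change $Z'\to Z$. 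The backward direction is then immediate and is already the content of the paper's observation that cokernel-flat maps are universal inclusions: if $G$ is $R$-flat the Tor vanishes for every $R'$. For the forward direction, universal-ness gives $\operatorname{Tor}_1^R(G,R')=0$ for all $R'$, and specializing to $R'=R/I$ for every ideal $I$, together with the standard local criterion for flatness over the Noetherian ring $R$, forces $G$ to be $R$-flat.

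Finally I would deduce $\mathcal{QS}=\mathcal{CFQS}$ when $Y$ is $T$-flat. For any $Z$ the scheme $B=Z\times_T Y$ is flat over $Z$, since flatness is stable under base change, so the equivalence just proved applies and shows that every $\mathcal{S}\in\mathcal{QS}(Z)$ has $Z$-flat cokernel. As $\mathcal{CFQS}(Z)$ is by definition the set of families whose cokernel is $Z$-flat, the two functors agree on objects, and the identification is visibly natural in $Z$.

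The step I expect to be the main obstacle is the forward implication, that being a family forces the cokernel to be flat. In general a universal inclusion need not have flat cokernel — the paper already flags this, noting one must ``work'' outside the ideal case — so the argument must genuinely exploit flatness of the ambient sheaf $\mathcal{O}_B^s$, i.e.\ of $B$ over $Z$, to annihilate the $\operatorname{Tor}_1(\mathcal{O}_B^s,-)$ term and convert the kernel into $\operatorname{Tor}_1(\mathcal{G},-)$. The only additional subtlety is ensuring I am entitled to restrict to base changes $R\to R/I$ when invoking the flatness criterion, which is legitimate precisely because these closed immersions are among the morphisms $Z'\to Z$ allowed in the definition of a $Z$-family.
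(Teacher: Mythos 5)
Your proposal is correct and follows essentially the same route as the paper: both exploit the $Z$-flatness of $\mathcal{O}_B^s$ to identify, via the $\operatorname{Tor}$ exact sequence, the kernel of the base-changed inclusion with $\operatorname{Tor}_1(\mathcal{G},-)$, and then convert universal injectivity into flatness of $\mathcal{G}$. The only minor differences are that you test against all quotients $R/I$ and invoke the ideal-theoretic flatness criterion, whereas the paper tests against residue fields $\mathbf{k}(q)$ at stalks and uses the local criterion over $\mathcal{O}_{Z,q}$, and that your explicit reduction of ``$Z$-family'' to ``universal inclusion'' (checking the subalgebra conditions pull back) is left implicit in the paper's proof.
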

\begin{proof}
$B$ is flat, thus so is $\mathcal{O}_B^s.$   Consider the exact sequence 
at a point $b \in B$:
$$
0 \to \mathcal{S}_b \to \mathcal{O}_{B,b}^s \to \mathcal{G}_b \to 0.
$$
Denote the image of $b$ in $Z$ by $q.$ Tensoring with the sheaf $\mathbf{k}(q)$
we get the $\operatorname{Tor}$ exact sequence
$$
0 \to \operatorname{Tor}^{\mathcal{O}_Z}_1(\mathcal{G}_b, \mathbf{k}(q)) \to 
\mathcal{S}_b \otimes_{\mathcal{O}_Z} \mathbf{k}(q) \to 
\mathcal{O}_{B,b}^s \otimes_{\mathcal{O}_Z} \mathbf{k}(q) \to 
\mathcal{G}_b \otimes_{\mathcal{O}_Z} \mathbf{k}(q) \to 
0.
$$
So we see that $\mathcal{S} \to \mathcal{O}_B^s$ is a universal inclusion
if and only if  $\operatorname{Tor}^{\mathcal{O}_Z}_1(\mathcal{G}_b, \mathbf{k}(q)) = 0$
for all $b \in B,$ i.e. $\mathcal{G}$ is $Z$-flat.
\end{proof}

\begin{thm}
\label{thm:main}
If $Y$ is $T$-flat, $QS = CFQS$ represents $\mathcal{QS}.$
\end{thm}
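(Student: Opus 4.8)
The plan is to obtain this theorem as an immediate consequence of the two preceding results, Lemma \ref{lem:flat-cok} and Theorem \ref{thm:cfqs}. The substantive work has already been done: Theorem \ref{thm:cfqs} exhibits $CFQS$ as a closed subscheme of the Quot scheme representing $\mathcal{CFQS}$, and Lemma \ref{lem:flat-cok} identifies the cokernel-flat families with all families once the base is $Z$-flat. So the only thing left is to assemble these into a statement about $\mathcal{QS}$ itself.

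First I would fix an arbitrary $T$-scheme $Z$ and set $B = Z \times_T Y$. Because $Y$ is $T$-flat and projective, the projection $B \to Z$ is flat and projective, these properties being stable under the base change along $Z \to T$. This places us squarely in the hypotheses of Lemma \ref{lem:flat-cok}. Applying that lemma with this $B$, a quasi-spline sheaf $\mathcal{S} \subseteq \mathcal{O}_B^s$ is a $Z$-family precisely when the cokernel $\mathcal{G}$ of $\mathcal{S} \to \mathcal{O}_B^s$ is $Z$-flat. The first condition is exactly what defines membership in $\mathcal{QS}(Z)$, and the second is exactly the defining condition of $\mathcal{CFQS}(Z)$, so the two sets coincide. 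I would then note that this identification is natural in $Z$: both $\mathcal{QS}$ and $\mathcal{CFQS}$ are subfunctors of the same assignment $Z \mapsto \{\text{quasi-spline sheaves on } Z \times_T Y\}$, cut out by conditions that are compatible with pullback, so the equality of sets upgrades to an equality of functors $\mathcal{QS} = \mathcal{CFQS}$.

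Combining this with Theorem \ref{thm:cfqs}, which asserts that $CFQS$ represents $\mathcal{CFQS}$, yields that $QS := CFQS$ represents $\mathcal{QS}$, which is the claim. I do not expect any genuine obstacle, as the Tor-sequence computation underlying Lemma \ref{lem:flat-cok} and the vanishing-locus construction of $C= \mathbf{V}(\kappa \oplus m)$ in Theorem \ref{thm:cfqs} already carry all of the content. The one point deserving a moment's care is the verification that $Z \times_T Y \to Z$ really is flat, since this is the single place where the $T$-flatness of $Y$ enters, and it is precisely the hypothesis distinguishing this theorem from the flatness-free statement of Theorem \ref{thm:condition-moduli}.
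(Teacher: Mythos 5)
Your proposal is correct and follows exactly the paper's route: the paper's entire proof of Theorem \ref{thm:main} is the one-line instruction to combine Lemma \ref{lem:flat-cok} (which already records that $\mathcal{QS}=\mathcal{CFQS}$ when $Y$ is $T$-flat) with Theorem \ref{thm:cfqs}. Your additional checks---that $Z\times_T Y \to Z$ is flat and projective by base change, and that the identification of $\mathcal{QS}(Z)$ with $\mathcal{CFQS}(Z)$ is natural in $Z$---are exactly the details the paper leaves implicit.
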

\begin{proof}
Combine Lemma \ref{lem:yfcf}.
and Theorem \ref{thm:cfqs}.
\end{proof}

\begin{rem}
If $Y$ is not $T$-flat, there is no chance that $QS = CFQS$ (in the way presented here).  For instance, 
one can take $\mathcal{S}=$ the diagonal copy of $\mathcal{O}_Y \subseteq \mathcal{O}_Y^s.$
The inclusion is universal and it spits $\mathcal{O}_Y^s$ into a direct sum $\mathcal{O}_Y \oplus \mathcal{O}_Y^{s-1}.$
So the cokernel is isomorphic to $\mathcal{O}_Y^{s-1},$ and not $T$-flat.
\end{rem}

\section{The Moduli of Ideal Difference-Conditions}
\label{section:ideal-difference}
We begin by constructing the moduli of ideal difference-conditions $C$ in Theorem \ref{thm:condition-moduli} as a flattening stratification
of a certain sheaf over 
$\operatorname{Hilb}(Y/T)^{s \choose 2}.$  To produce our ``compactification'' of this scheme,
we show in Proposition \ref{prop:C-subscheme} it is a subscheme of ${CFQS} \times_T \operatorname{Hilb}(Y/T)^{s \choose 2},$ and define the compactification to be the scheme theoretic image $\overline{C}$ of the inclusion.
Finally, we argue via Proposition \ref{prop:C-correct} that this compactification is the ``correct'' one.

\subsection{The Moduli of Ideal Difference-Conditions}
For ideal difference-conditions defined by a collection of ideals $(\mathfrak{I}_{jk})_{jk}$
over a $Z$-scheme $B,$ we consider 
the morphism 
\begin{equation}
\label{equation:ideal-difference}
\Delta \colon \mathcal{O}_{B}^s \to \bigoplus_{jk} \mathcal{O}_{B} / \mathfrak{I}_{jk}.
\end{equation}
which sends $(g_1, \dotsc, g_s) \mapsto (g_{j}-g_k + \mathfrak{I}_{jk})_{jk}.$  Not all collections of ideals are well behaved.

\begin{defn}
Recall the notion of  a {\bf $Z$-family} of ideal difference-conditions: Under  base change along any morphism $Z'  \to Z$  the sequence
$$ 
0 \to \mathcal{S}_\mathfrak{I} \to \mathcal{O}_{B}^s \to \bigoplus_{jk} \mathcal{O}_{B} / \mathfrak{I}_{jk}.
$$
remains exact, and for each ${jk}$ the sequence
$$
0 \to \mathfrak{I}_{jk} \to  \mathcal{O}_{B} \to \mathcal{O}_{B} / \mathfrak{I}_{jk} \to 0
$$
also remains exact.  This means sheaves $(\mathcal{S}_\mathfrak{I}, (\mathfrak{I}_{jk})_{jk})$ ``remain themselves.''
after  such a change of base.
\end{defn}

\begin{lem}
\label{lem:characterize-diff-con}
A collection of ideals $(\mathfrak{I}_{jk})_{jk}$ over a $Z$-scheme $B$ define a $Z$-family of ideal difference-conditions if and only if 
both 
$\bigoplus_{jk} \mathcal{O}_{B} / \mathfrak{I}_{jk}$
and the cokernel of 
$$
\Delta \colon \mathcal{O}_{B}^s \to \bigoplus_{jk} \mathcal{O}_{B} / \mathfrak{I}_{jk}
$$ are $Z$-flat.  In this case, the cokernel $\mathcal{G}$ of 
$\mathcal{S}_\mathfrak{I} \to \mathcal{O}_{B}^s$ is automatically $Z$-flat.
\end{lem}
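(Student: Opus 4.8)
The plan is to factor $\Delta$ through its image and reduce the whole statement to two short exact sequences. Write $\mathcal{M}=\bigoplus_{jk}\mathcal{O}_B/\mathfrak{I}_{jk}$, $\mathcal{K}=\operatorname{im}\Delta$, $\mathcal{S}=\mathcal{S}_\mathfrak{I}=\ker\Delta$ and $\mathcal{C}=\operatorname{cok}\Delta$. Since $\Delta$ factors as $\mathcal{O}_B^s\twoheadrightarrow\mathcal{K}\hookrightarrow\mathcal{M}$, I obtain
$$0\to\mathcal{S}\to\mathcal{O}_B^s\to\mathcal{K}\to0\qquad\text{and}\qquad0\to\mathcal{K}\to\mathcal{M}\to\mathcal{C}\to0,$$
which I will call (I) and (II). The first isomorphism theorem identifies $\mathcal{K}=\operatorname{im}\Delta$ with $\mathcal{O}_B^s/\ker\Delta$, that is, with the cokernel $\mathcal{G}$ of $\mathcal{S}_\mathfrak{I}\to\mathcal{O}_B^s$; so the closing assertion of the lemma reduces to showing that $\mathcal{K}$ is $Z$-flat, and it will come for free from the rest.

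First I would unwind the definition into two independent requirements: (D1) that $0\to\mathcal{S}\to\mathcal{O}_B^s\to\mathcal{M}$ stay exact after every base change, and (D2) that each $0\to\mathfrak{I}_{jk}\to\mathcal{O}_B\to\mathcal{O}_B/\mathfrak{I}_{jk}\to0$ stay exact after every base change. By the stated Proposition that an ideal sheaf is a universal inclusion exactly when its quotient is relatively flat, together with the fact that a direct sum is $Z$-flat iff each summand is, (D2) is equivalent to $Z$-flatness of $\mathcal{M}$. Thus I may assume $\mathcal{M}$ is $Z$-flat throughout, and it remains to prove, under this hypothesis, that (D1) holds iff $\mathcal{C}$ is $Z$-flat.

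The heart is an analysis of (I) and (II) by the same Tor and local-criterion method as in Lemma \ref{lem:flat-cok}, now with $\mathcal{M}$ flat playing the role that $\mathcal{O}_B^s$ played there. Tensoring (II) at a stalk $b$ (over $q=\pi(b)$) with $\mathbf{k}(q)$ and using $\operatorname{Tor}_1(\mathcal{M}_b,\mathbf{k}(q))=0$ yields an exact
$$0\to\operatorname{Tor}_1(\mathcal{C}_b,\mathbf{k}(q))\to\mathcal{K}_b\otimes\mathbf{k}(q)\to\mathcal{M}_b\otimes\mathbf{k}(q),$$
so $\mathcal{K}\to\mathcal{M}$ is fiberwise injective exactly when $\operatorname{Tor}_1(\mathcal{C}_b,\mathbf{k}(q))=0$ for all $b$, i.e. exactly when $\mathcal{C}$ is $Z$-flat. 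For the forward direction I extract $\mathcal{K}\hookrightarrow\mathcal{M}$ on fibers from (D1): right-exactness of (I) gives $\operatorname{im}(\mathcal{S}|_q)=\ker(\mathcal{O}_B^s|_q\to\mathcal{K}|_q)$ and $\mathcal{O}_B^s|_q\to\mathcal{K}|_q$ is onto, so the exactness asserted in (D1) forces $\ker(\mathcal{K}|_q\to\mathcal{M}|_q)=0$, whence $\mathcal{C}$ is flat. For the converse, $\mathcal{C}$ flat together with (II) and $\mathcal{M}$ flat gives $\mathcal{K}$ flat by the two-out-of-three flatness property for short exact sequences; flatness of the quotient $\mathcal{K}$ makes $\operatorname{Tor}_1(\mathcal{K},-)$ vanish, so (I) stays exact under every base change and $\mathcal{S}\hookrightarrow\mathcal{O}_B^s$ is a universal inclusion, while $\mathcal{C}$ flat makes $\mathcal{K}\hookrightarrow\mathcal{M}$ universal; splicing these recovers (D1). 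Flatness of $\mathcal{K}=\mathcal{G}$ then yields the final sentence.

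The main obstacle is precisely that $Y$ is not assumed $T$-flat, so $\mathcal{O}_B^s$ is not $Z$-flat and one cannot imitate Lemma \ref{lem:flat-cok} directly. The device that rescues the argument is to route through $\mathcal{K}=\operatorname{im}\Delta$ and to notice $\mathcal{K}=\mathcal{G}$: this converts the delicate requirement that $\mathcal{S}\to\mathcal{O}_B^s$ stay injective into the vanishing of $\operatorname{Tor}_1(\mathcal{K})$, which is controlled by flatness of the \emph{quotient} $\mathcal{K}$ rather than of the non-flat $\mathcal{O}_B^s$. A second point to get right is that $\mathcal{K}$ flat alone does not imply (D1) — a cokernel of an inclusion of flat sheaves need not be flat — so the invariant truly governing (D1) is the flatness of $\mathcal{C}$, which I extract through sequence (II).
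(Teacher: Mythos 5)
Your proof is correct and takes essentially the same route as the paper's: the paper likewise reduces first to flatness of the ideal quotients, introduces the same two short exact sequences (its $\mathcal{G}$ and $\mathcal{H}$ are your $\mathcal{K}$ and $\mathcal{C}$), observes that given $\bigoplus_{jk}\mathcal{O}_B/\mathfrak{I}_{jk}$ flat the universal exactness of the second sequence is equivalent to flatness of the cokernel of $\Delta$, and then gets flatness of $\mathcal{G}$ by the two-out-of-three property, which in turn gives universal exactness of the first sequence. Your explicit Tor and local-criterion verifications are precisely the mechanism of Lemma \ref{lem:flat-cok} that the paper invokes implicitly, so the two arguments differ only in the level of detail.
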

\begin{proof}
For ideals, universal inclusions are equivalent to flatness of their cokernels, so
a $Z$-family requires $
\mathcal{O}_{Z \times_T Y} / \mathfrak{I}_{jk} 
$
is $Z$-flat for all $jk.$  Given this, the additional required conditions 
reveal themselves after considering the  two standard exact sequences associated to the morphism in Equation (\ref{equation:ideal-difference}):
\begin{equation}
\label{eq:spline-inclusion}
0 \to \mathcal{S}_{\mathfrak{I}} \to \mathcal{O}_{B}^s  \to \mathcal{G} \to 0
\end{equation}
and 
\begin{equation}
\label{eq:cokernel-quotient}
0 \to \mathcal{G} \to \bigoplus_{jk} \mathcal{O}_{B} / \mathfrak{I}_{jk} \to \mathcal{H} \to 0.
\end{equation}
If either of these exact sequences fail to be exact after base change, $\mathcal{S}_{\mathfrak{I}}$
will no longer be the kernel of Equation (\ref{equation:ideal-difference}).

Since  $\bigoplus_{jk} \mathcal{O}_{Z \times_T Y} / \mathfrak{I}_{jk}$ must be $Z$-flat, universal exactness of the second  standard sequence
is equivalent to  the  $Z$-flatness of $\mathcal{H}.$  This implies the $Z$-flatness of $\mathcal{G},$ and thus the
exactness of the first standard sequence.
\end{proof}

\begin{defn}
Given a projective scheme $Y/T,$ denote the structure sheaf of 
product $\operatorname{Hilb}(Y/T)^{s \choose 2}$ 
of Hilbert Schemes by $\mathcal{O}.$  Over this product we have the morphism
$$
\Delta \colon \mathcal{O}^s \to \bigoplus_{jk} \mathcal{O}/\mathfrak{I}_{jk}.
$$
Denote the cokernel by $\mathcal{H}$ and we define 
{\bf the moduli of ideal difference-conditions} ${C} = $
the universal flattening stratification for $\mathcal{H}.$
\end{defn}

\begin{thm}
\label{thm:condition-moduli}
The functor
$$
\mathcal{C}(Z) = \{\text{$Z$-families of  ideal difference conditions on $Z \times_T Y$}\}
$$
is representable by ${C}.$
\end{thm}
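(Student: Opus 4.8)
The plan is to exhibit a bijection $\operatorname{Mor}_T(Z, C) \cong \mathcal{C}(Z)$, natural in $Z$, by chaining two universal properties: that of the Hilbert scheme $\operatorname{Hilb}(Y/T)^{s \choose 2}$ for the individual ideals $\mathfrak{I}_{jk}$, and that of the flattening stratification for the cokernel sheaf $\mathcal{H}$. The essential reduction is Lemma \ref{lem:characterize-diff-con}, which recasts the a priori base-change definition of a $Z$-family of ideal difference-conditions as the conjunction of two flatness statements: $Z$-flatness of $\bigoplus_{jk}\mathcal{O}_{Z \times_T Y}/\mathfrak{I}_{jk}$ and $Z$-flatness of the cokernel of $\Delta$. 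The strategy is to let the first condition be absorbed by the morphism to the Hilbert scheme, and the second be enforced exactly by passing to the stratum $C = \operatorname{Hilb}(Y/T)^{s \choose 2}\,{}^{\mathcal{H}}_{\text{flat}}$.

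First I would start from a $Z$-family $(\mathfrak{I}_{jk})_{jk}$. Since each quotient $\mathcal{O}_{Z \times_T Y}/\mathfrak{I}_{jk}$ is $Z$-flat, each $\mathfrak{I}_{jk}$ determines a flat family of closed subschemes of $Y$ over $Z$, hence by the universal property of the Hilbert scheme a $T$-morphism $Z \to \operatorname{Hilb}(Y/T)$; assembling the ${s \choose 2}$ of these gives a single morphism $\psi \colon Z \to \operatorname{Hilb}(Y/T)^{s \choose 2}$ under which the universal ideals pull back to the given $\mathfrak{I}_{jk}$. Because $\Delta$ is defined entry-wise by the universal assignment $(g_1,\dotsc,g_s) \mapsto (g_j - g_k + \mathfrak{I}_{jk})_{jk}$, it is compatible with base change, and as pullback is right exact the formation of the cokernel commutes with $\psi^*$; thus $\psi^*\mathcal{H}$ is precisely the cokernel of $\Delta$ over $Z \times_T Y$. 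By Lemma \ref{lem:characterize-diff-con}, the family condition now reduces to $Z$-flatness of $\psi^*\mathcal{H}$, and by the defining property of the flattening stratification this is equivalent to $\psi$ factoring uniquely through $C$.

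Conversely, given a $T$-morphism $Z \to C$, I would compose with the structural map $C \to \operatorname{Hilb}(Y/T)^{s \choose 2}$ to obtain ideals $\mathfrak{I}_{jk}$ whose quotients $\mathcal{O}_{Z \times_T Y}/\mathfrak{I}_{jk}$ are $Z$-flat, and use that the morphism factors through the flattening stratum to conclude that $\psi^*\mathcal{H}$ is $Z$-flat. Lemma \ref{lem:characterize-diff-con} then certifies that this tuple is a genuine $Z$-family of ideal difference-conditions. These two assignments are mutually inverse, and naturality in $Z$ follows from the naturality built into both the Hilbert scheme and the flattening stratification, so $C$ represents $\mathcal{C}$.

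The main obstacle I anticipate is not a single deep step but the careful bookkeeping of the compatibilities: verifying that forming $\mathcal{H}$ genuinely commutes with the base change $\psi^*$, so that the flatness tested by the stratification is the same flatness relevant to the family; and confirming that the two separate flatness conditions of Lemma \ref{lem:characterize-diff-con} are correctly distributed, with flatness of $\bigoplus_{jk}\mathcal{O}/\mathfrak{I}_{jk}$ being subsumed into the Hilbert-scheme morphism and the residual flatness of $\mathcal{H}$ being exactly what $C$ cuts out. Once these compatibilities are pinned down, representability follows formally from the stated universal properties.
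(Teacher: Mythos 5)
Your proposal is correct and follows exactly the paper's own argument, which cites precisely the same three ingredients — the universal property of the Hilbert scheme, the universal property of the flattening stratification, and Lemma \ref{lem:characterize-diff-con} — and leaves the assembly to the reader. Your write-up simply makes explicit the compatibilities (pullback of $\mathcal{H}$ being the cokernel of $\Delta$ after base change, and the distribution of the two flatness conditions) that the paper's one-line proof takes for granted.
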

\begin{proof}
The definition of the Hilbert Scheme, the universal flattening stratification, and Lemma \ref{lem:characterize-diff-con} give the result.
\end{proof}

\subsection{Compatible Pairs and Compactification of the Moduli of Ideal Difference-Conditions}
We now have a construction of the moduli scheme of ideal difference conditions.  However, to construct a satisfying ``compactification,''  we present it in a slightly different way.  This involves the observation that the assignment
$$
(\mathfrak{I}_{jk})_{jk} \mapsto \mathcal{S}_\mathfrak{I}
$$
defines a morphism ${C} \to {CFQS}.$  It will turn out that the resulting morphism
$$
{C} \to {CFQS} \times_T \operatorname{Hilb}(Y/T)^{s \choose 2}
$$
is an inclusion of ${C}$ as a locally closed subscheme, and its scheme theoretic closure $\overline{{C}}$
it the ``correct'' compactification.  The correctness of $\overline{{C}}$ is based on the existence of a universal family of 
compatible pairs (Definition \ref{defn:compatible-pairs}) and its universality (Proposition \ref{prop:C-correct}).

\begin{defn}
\label{defn:compatible-pairs}
A pair of a quasi-spline sheaf $\mathcal{S}$ and a ${s \choose 2}$-tuple of ideal sheaves $(\mathfrak{I}_{jk})_{jk}$
over a scheme $B$ is called a {\bf compatible pair}
if the composition 
$$
\mathcal{S} \to \mathcal{O}^s \to  \bigoplus_{jk} \mathcal{O}_{B} / \mathfrak{I}_{jk}
$$
is zero.
\end{defn}

\begin{defn}
Denote by $\mathcal{O}$ the structure sheaf of ${CFQS} \times_T \operatorname{Hilb}(Y/T)^{s \choose 2} \times_T Y.$  
Over ${CFQS} \times_T \operatorname{Hilb}(Y/T)^{s \choose 2} \times_T Y$ we have the universal pair $(\mathcal{S}, (\mathfrak{I}_{jk})_{jk})$
and the compatibility map 
 $$\psi \colon \mathcal{S} \to \bigoplus_{jk} \mathcal{O} / \mathfrak{I}_{jk}.$$

The sheaf
$\bigoplus_{jk} \mathcal{O} / \mathfrak{I}_{jk}$ is relatively flat over ${CFQS} \times_T \operatorname{Hilb}(Y/T)^{s \choose 2},$ so
Lemma \ref{lem:zero-morphism} produces the {\bf moduli of compatible pairs}
$$
P = \mathbf{V}(\psi) \subseteq {CFQS} \times_T \operatorname{Hilb}(Y/T)^{s \choose 2}.
$$
\end{defn}

There is a natural morphism $f \colon {C} \to P$ which sends 
$(\mathfrak{I}_{jk})_{jk} \mapsto (\mathcal{S}_{\mathfrak{I}}, (\mathfrak{I}_{jk})_{jk}).$

\begin{lem}
\label{lem:reduced-hilbert-constant-flat}
Let $Z$ be a locally Noetherian and  $\mathcal{F}$ be a coherent sheaf over a reduced projective $Z$-scheme $B.$  
Then $\mathcal{F}$ is $Z$-flat if and only if the Hilbert polynomial of $\mathcal{F}$ is locally independent of $Z$.
\end{lem}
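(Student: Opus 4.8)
The plan is to treat the two implications separately after reducing to the affine case. Both relative flatness and the local constancy of the Hilbert polynomial are local conditions on the base, so I may assume $Z=\operatorname{Spec}A$ with $A$ a reduced Noetherian ring (the generalization at issue relaxes the integrality hypothesis of the Hilbert Polynomials and Relative Flatness theorem on the base to reducedness), that $Z$ is connected, and that the Hilbert polynomial under consideration is a single fixed $p$. Fixing an embedding $B\subseteq\mathbf{P}^n_Z$ lets me twist by $\mathcal{O}(d)$ and push forward along $\pi\colon B\to Z$.

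The implication ``flat $\Rightarrow$ locally constant Hilbert polynomial'' needs no reducedness, and I would dispatch it exactly as in Corollary \ref{cor:cbc}. By Serre's Generation and Finiteness Theorems there is a uniform $d_0$ so that $R^i\pi_*\mathcal{F}(d)=0$ for all $i>0$ and all $d\geq d_0$; since $\mathcal{F}$ is $Z$-flat, Cohomology and Base Change then makes $\pi_*\mathcal{F}(d)$ locally free with formation commuting with base change, whence
\[
\dim_{\mathbf{k}(q)}\bigl(\pi_*\mathcal{F}(d)\otimes\mathbf{k}(q)\bigr)=\dim_{\mathbf{k}(q)}H^0(B_q,\mathcal{F}|_q(d))=p_q(d)
\]
is locally constant for every $d\geq d_0$. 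As a polynomial is determined by its values at infinitely many integers, $p_q$ itself is locally constant.

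For the converse, which is the real content, I would invoke the Existence of the Universal Flattening Stratification. Writing the flattening stratification of $\mathcal{F}$ as a disjoint union of locally closed strata $Z_{p'}$, each point $q\in Z$ lies in exactly one of them: pulling $\mathcal{F}$ back along $\operatorname{Spec}\mathbf{k}(q)\to Z$ gives a sheaf automatically flat over the field $\mathbf{k}(q)$ with Hilbert polynomial $p_q$, so by the universal property $q$ factors through $Z_{p_q}$. Under the hypothesis $p_q=p$ for every $q$, the locally closed immersion $Z_p\hookrightarrow Z$ is surjective on points; a locally closed immersion factors as a closed immersion into an open subscheme, and surjectivity forces that open subscheme to be all of $Z$, so $Z_p\hookrightarrow Z$ is a closed immersion. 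It is therefore cut out by an ideal sheaf $\mathcal{I}\subseteq\mathcal{O}_Z$ with $\mathbf{V}(\mathcal{I})=Z$, and here reducedness of the base forces $\mathcal{I}$ into the nilradical of $\mathcal{O}_Z$, which is zero. Hence $Z_p=Z$ as schemes, the identity $Z\to Z$ factors through the stratum $Z_p$, and $\mathcal{F}$ is $Z$-flat by the defining property of that stratum.

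The step I expect to be the main obstacle, and the unique place where reducedness is indispensable, is upgrading the set-theoretic equality $Z_p=Z$ to a scheme-theoretic one. Over a non-reduced base this genuinely fails (the standard $\operatorname{Spec}\mathbf{k}\hookrightarrow\operatorname{Spec}\mathbf{k}[\epsilon]$ phenomenon), which is precisely why the integral hypothesis can be weakened only as far as reducedness. I would also make sure the threshold ``$d\gg 0$'' in the easy direction is uniform over $Z$, which is exactly what Serre's finiteness statement guarantees on a Noetherian base. The underlying algebraic reason is the familiar fact that over a reduced Noetherian ring a finitely generated module is locally free once its fiber dimension is locally constant; the stratification argument above is attractive because it extracts precisely this fact (vanishing of the nilradical) while letting the representability of the flattening functor bypass the base-change bookkeeping for $\pi_*\mathcal{F}(d)$.
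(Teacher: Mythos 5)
Your proof is correct, but its hard direction follows a genuinely different route from the paper's. The paper also localizes to a reduced affine Noetherian base, but then bootstraps from the integral case: each irreducible component $Z_i$, given its reduced (hence integral) structure, has constant Hilbert polynomial, so the Hilbert Polynomials and Relative Flatness theorem (Hartshorne III.9.9) makes $\mathcal{F}$ flat over $Z_i$; the universal property of the flattening stratification converts this into morphisms $\phi_i \colon Z_i \to Z^\mathcal{F}_\text{flat}$, which are then glued over the finitely many components by a Chinese-Remainder-Theorem patching argument into a section $\phi \colon Z \to Z^\mathcal{F}_\text{flat}$, and a section forces flatness. Reducedness is spent there on the gluing: the intersection of the ideals of the components is the nilradical, so the patched map lives on all of $Z$ rather than only on $Z_{\text{red}}$. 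You never touch the integral case: you use that residue fields are automatically flat to place every point of $Z$ in the stratum labelled by $p$, and you spend reducedness instead on killing the ideal of the resulting surjective closed immersion $Z_p \hookrightarrow Z$. Each route has its advantages. Yours is more self-contained (it effectively re-proves Hartshorne III.9.9 for reduced bases rather than citing it) and avoids the patching bookkeeping; but it relies on the finer, standard form of the flattening stratification in which each occurring Hilbert polynomial labels a single locally closed stratum, whereas the version stated in the paper's preliminaries promises only a disjoint union of locally closed subschemes, and the paper's argument needs nothing beyond that bare universal property. (With only the weaker statement, a fixed polynomial could a priori label several disjoint locally closed pieces, and a disjoint union of locally closed subschemes covering $Z$ bijectively need not map homeomorphically onto $Z$; so your appeal to the polynomial-indexed form is genuinely needed, though entirely standard.) Two small further points: the easy direction you prove is silently delegated by the paper to the cited Hartshorne theorem, and your passage from $R^i\pi_*\mathcal{F}(d)=0$ for $i>0$ to local freeness and base-change compatibility of $\pi_*\mathcal{F}(d)$ should, to match the paper's fiberwise formulation of Cohomology and Base Change, go through fiberwise vanishing first, exactly as packaged in Corollary \ref{cor:cbc}.
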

\begin{proof}
The question is local on $Z$, so assume $Z$ is affine and thus has finitely many irreducible components. 
Write $Z_1, \dotsc,  Z_k$ for the irreducible components of $Z,$ and write $Z^\mathcal{F}_\text{flat}$ for the flattening 
stratification of $Z$ for $\mathcal{F}.$  Over each $Z_i$ the restriction $\mathcal{F}$ is flat  by Hartshorne III.9.9, so we get a morphism $\phi_i \colon Z_i \to Z^\mathcal{F}_\text{flat}.$
$\phi_i$ and $\phi_j$ agree on  $Z_i \cap Z_j.$ 
It is a quick check (the Chinese Remainder Theorem) to see that one can patch maps on a pair of closed subschemes to produce one on their
union if the maps agree on the intersection.
So these maps define  $\phi_{ij} \colon Z_i \cup Z_j \to Z^\mathcal{F}_\text{flat}.$  
If we take this as a base case, the same argument produces a map $\phi_{i_1 \dotsm i_\ell} \colon Z_{i_1} \cup \dotsm \cup Z_{i_\ell} \to Z^\mathcal{F}_\text{flat}$ from 
$\phi_{i_1 \dotsm i_{\ell-1}}$ and $\phi_{i_\ell}.$  There are only finitely many components, so we get a morphism $\phi \colon Z \to Z^\mathcal{F}_\text{flat}.$
This morphism is a section of the map $Z^\mathcal{F}_\text{flat} \to Z.$  Since $Z$ is reduced and $Z^\mathcal{F}_\text{flat}$ is a locally closed subscheme of $Z$, this map is an isomorphism.
\end{proof}

\begin{defn}
Denote the {\bf compactification by compatible pairs} $\overline{{C}}$ to be the scheme theoretic image of ${C}$ in $P.$  
\end{defn}

\begin{prop}
\label{prop:C-subscheme}
${C} \to \overline{{C}}$ is a locally closed immersion.
\end{prop}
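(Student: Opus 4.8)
The plan is to prove the stronger statement that the natural morphism $f \colon {C} \to P$ is a locally closed immersion, and then to deduce the proposition by passing to the scheme-theoretic image $\overline{C}$. The whole argument is organized around the projection $\pi \colon P \to \operatorname{Hilb}(Y/T)^{s \choose 2}$ (the restriction to $P$ of the second projection of ${CFQS} \times_T \operatorname{Hilb}(Y/T)^{s \choose 2}$) and the structure morphism $\iota_C \colon {C} \to \operatorname{Hilb}(Y/T)^{s \choose 2}$ of the flattening stratification. By construction $\pi \circ f = \iota_C$, and since a universal flattening stratification represents a subfunctor of the functor of points of its base, $\iota_C$ is a monomorphism; hence $f$ is a monomorphism as well.

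First I would record that $\pi$ is separated. Indeed $P$ is a closed subscheme of ${CFQS} \times_T \operatorname{Hilb}(Y/T)^{s \choose 2}$, and ${CFQS}$ is a closed subscheme of $\operatorname{Quot}(\mathcal{O}_Y^s/Y/T)$, which is projective and in particular separated over $T$. Thus ${CFQS} \to T$ is separated, the projection ${CFQS} \times_T \operatorname{Hilb}(Y/T)^{s \choose 2} \to \operatorname{Hilb}(Y/T)^{s \choose 2}$ is separated, and $\pi$ is its composite with a closed immersion.

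Next I would run the graph argument relative to $\operatorname{Hilb}(Y/T)^{s \choose 2}$. Viewing $f$ as a morphism over $\operatorname{Hilb}(Y/T)^{s \choose 2}$, it factors as
$$
{C} \xrightarrow{\ \sigma\ } {C} \times_{\operatorname{Hilb}(Y/T)^{s \choose 2}} P \xrightarrow{\ \operatorname{pr}_2\ } P,
$$
where $\sigma$ is the section of $\operatorname{pr}_1$ determined by $f$. Since $\operatorname{pr}_1$ is the base change of the separated morphism $\pi$, it is separated, so its section $\sigma$ is a closed immersion. On the other hand $\operatorname{pr}_2$ is the base change of the flattening stratification $\iota_C$ along $\pi$; because flattening stratifications are stable under base change, $\operatorname{pr}_2$ is again a disjoint union of locally closed immersions onto the strata of $\pi^*\mathcal{H}$. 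Composing the closed immersion $\sigma$ with $\operatorname{pr}_2$ exhibits $f$ as a locally closed immersion. Finally, since $\overline{C}$ is by definition the scheme-theoretic image of $f$ in $P$, the induced factorization ${C} \to \overline{C}$ is again a locally closed immersion, which is the assertion of the proposition.

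The step requiring the most care is the final composition of $\operatorname{pr}_2$ with $\sigma$: the flattening stratification $\iota_C$ is in general \emph{not} a single immersion but a disjoint union of locally closed immersions whose images partition $\operatorname{Hilb}(Y/T)^{s \choose 2}$ set-theoretically. The last step must therefore be carried out stratum by stratum, using that the section $\sigma$ respects the stratification (its image meets each piece $C_\alpha \times_{\operatorname{Hilb}(Y/T)^{s \choose 2}} P$ in a closed subscheme isomorphic to $C_\alpha$) and that the resulting locally closed images in $P$ are disjoint. One must also confirm the quasi-compactness needed for the scheme-theoretic image to behave well, which holds locally in the locally Noetherian setting since only finitely many strata are relevant over any quasi-compact open.
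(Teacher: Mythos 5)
Your per-stratum reasoning is sound: $\pi \colon P \to \operatorname{Hilb}(Y/T)^{s \choose 2}$ is separated, the graph section $\sigma$ is a closed immersion, the base change of the flattening stratification along $\pi$ is again a disjoint union of locally closed immersions, and composing shows that each restriction $f|_{C_\alpha}$ to a single stratum is a locally closed immersion. The gap is in the assembly step, exactly where you flag it, and the patch you propose does not work: a disjoint union of locally closed immersions with pairwise disjoint images is \emph{not} in general an immersion. Disjointness gives you neither that each image $f(C_\alpha)$ is open in the union $\bigcup_\alpha f(C_\alpha)$, nor that this union is locally closed in $P$; without both, $f$ fails to be a homeomorphism onto a locally closed subset. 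The basic counterexample is $\operatorname{Spec}\mathbf{k} \sqcup (\mathbf{A}^1_\mathbf{k}\setminus\{0\}) \to \mathbf{A}^1_\mathbf{k}$, which is itself a flattening stratification (of the structure sheaf of the origin over $Z = \mathbf{A}^1_\mathbf{k}$). Note that your argument uses nothing about the ${CFQS}$ coordinate of $f$ beyond formal separatedness, so if your closing principle were valid it would apply verbatim with $P$ replaced by $\operatorname{Hilb}(Y/T)^{s \choose 2}$ and $f$ by $\iota_C$, proving that the flattening stratification map itself is a locally closed immersion---which is false in general, and is precisely the reason one passes to $P$ at all.

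The missing ingredient, which is the actual substance of the paper's proof, is that the ${CFQS}$ factor topologically separates the strata. ${CFQS}$ is a closed subscheme of the Quot scheme, which is a disjoint union of open-and-closed pieces ${CFQS}_{p_\mathcal{G}}$ indexed by the Hilbert polynomial $p_\mathcal{G}$ of the universal quotient, and $\operatorname{Hilb}(Y/T)^{s \choose 2}$ likewise decomposes by the polynomials $p_{\mathfrak{I}_{jk}}$. The relation
$$
p_\mathcal{G}(t) = {s \choose 2}\, p_{\mathcal{O}_Y}(t) - \sum_{jk} p_{\mathfrak{I}_{jk}}(t) - p_\mathcal{H}(t)
$$
shows that strata of $C$ carrying different data $\bigl(p_\mathcal{H}, (p_{\mathfrak{I}_{jk}})_{jk}\bigr)$ are sent by $f$ into \emph{different} clopen pieces of the ambient product ${CFQS} \times_T \operatorname{Hilb}(Y/T)^{s \choose 2}$, hence into pairwise disjoint open subsets of $P$. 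Only after this separation do the per-stratum immersions glue to a global one, since local closedness and the homeomorphism condition can then be checked on these disjoint opens. This is what the paper establishes, together with a concrete identification of each piece: $C_{p_\mathcal{G}, p_\mathfrak{I}}$ is the open subscheme of the locally closed subscheme $P \cap ({CFQS}_{p_\mathcal{G}} \times_T C_{p_\mathcal{G}, p_\mathfrak{I}})$ on which $\mathcal{S} \to \mathcal{S}_\mathfrak{I}$ is an isomorphism. Your proof needs this Hilbert-polynomial separation added to the final step; as written, the conclusion does not follow from what you have proved.
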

\begin{proof}
Since $\overline{C}$ is a closed subset of $P$, we will show that $f \colon {C} \to P$ is a locally closed immersion.
The question is local on $T,$ so assume $Y \subseteq \mathbf{P}^n_T.$  This allows us to talk about Hilbert polynomials 
for $Z$-flat sheaves on schemes of the form $Z \times_T Y.$

First we establish that if we fix polynomials $p_\mathcal{G}$ and $p_{\mathfrak{I}_{jk}}$
for each $jk,$ the scheme
${C}_{p_\mathcal{G}, p_\mathfrak{I}}$
is a union of  connected components of ${C}$.  To do this, we 
must verify that the Hilbert polynomials of $\mathcal{G}$ and the $\mathfrak{I}_{jk}$'s
are locally independent of the base.  

The Hilbert polynomials 
$p_{\mathfrak{I}_{jk}}(t)$ for each $jk$ 
are locally independent of ${C}$  by virtue of the fact that the $\mathfrak{I}_{jk}$'s
pull back from $\operatorname{Hilb}(Y/T)^{s \choose 2}.$  For 
the Hilbert polynomial of $\mathcal{G},$ observe that we have the equation 
\begin{equation}
\label{eq:hilb-poly-lin-relat}
 p_\mathcal{G}(t) = {s \choose 2}p_{\mathcal{O}_Y}(t) - \sum_{jk}p_{\mathfrak{I}_{jk}}(t) - p_\mathcal{H}(t).
\end{equation}
where we continue to denote the cokernel of the morphism $\mathcal{G} \to \bigoplus_{jk} \mathcal{O}_{B} / \mathfrak{I}_{jk}$ by 
$\mathcal{H}.$  This equation shows that the local constancy of 
$p_\mathcal{H}(t)$ is equivalent to the local constancy of 
$p_\mathcal{G}(t)$ (provided the $p_{\mathfrak{I}_{jk}}(t)$'s
are locally independent of the base).  As the flattening stratification of $\mathcal{H}$, $p_\mathcal{H}(t)$ is locally independent of $C.$

Before considering $P,$ we establish the  map ${C}_{p_\mathcal{G}, p_\mathfrak{I}} \to \operatorname{Hilb}(Y/T)^{s \choose 2}$
is a locally closed immersion.  This is topological statement since we know that, as a flattening stratification, ${C}$ is the disjoint union of locally closed 
subschemes.  Consider the image $\Lambda$ of 
${C}_{p_\mathcal{G}, p_\mathfrak{I}}$ as a topological space.  Choose a connected component $\Lambda'$ of $\Lambda.$
The Hilbert polynomials of $\mathcal{H}$ and the ideals $\mathfrak{I}_{jk}$ are locally independent of $\Lambda',$ so there is an open subset $U$ 
of the closure $\overline{\Lambda'}$ in $\operatorname{Hilb}(Y/T)^{s \choose 2}$ on which all these polynomials are locally independent of the point in $U$. 
This is a constructible set containing $\Lambda'.$ 
Equipped with its reduced scheme structure, the local independence of the Hilbert polynomials of $\mathcal{H}$
and the $\mathfrak{J}_{jk}$'s imply 
by Lemma \ref{lem:reduced-hilbert-constant-flat} these sheaves are flat over $U.$
So it admits a section $U\to {C}_{p_\mathcal{G}, p_\mathfrak{I}}$ and we can conclude that
$U = \Lambda'.$  In other words, any connected component $\Lambda'$ of $\Lambda$ is the homeomorphic
image of a connected component of ${C}_{p_\mathcal{G}, p_\mathfrak{I}}.$  Thus 
${C}_{p_\mathcal{G}, p_\mathfrak{I}} \to \operatorname{Hilb}(Y/T)^{s \choose 2}$
is a locally closed immersion.

Finally, we consider the morphism ${C} \to P.$  Denote by ${CFQS}_{p_\mathcal{G}}$ the component of 
${CFQS}$ over which $\mathcal{G}$ has  Hilbert polynomial ${p_\mathcal{G}}.$  We see that 
${C}_{p_\mathcal{G}, p_\mathfrak{I}}$ is carried to 
${CFQS}_{p_\mathcal{G}} \times_T {C}_{p_\mathcal{G}, p_\mathfrak{I}}.$
This is a locally closed subscheme of ${CFQS} \times_T\operatorname{Hilb}(Y/T)^{s \choose 2},$
so $P \cap ({CFQS}_{p_\mathcal{G}} \times_T {C}_{p_\mathcal{G}, p_\mathfrak{I}})$
is a locally closed subscheme of $P.$
${C}_{p_\mathcal{G}, p_\mathfrak{I}}$ itself is identified with the open subset of 
$P \cap ({CFQS}_{p_\mathcal{G}} \times_T {C}_{p_\mathcal{G}, p_\mathfrak{I}})$
over which $\mathcal{S} \to \mathcal{S}_\mathfrak{I}$ is an isomorphism.   To be sure that this 
is an open set, apply $\Gamma_*$ to the map, and notice this set coincides with the points where 
the cokernel and kernel vanish.
Thus as a open subscheme  of a
locally closed subscheme of $P,$ it is locally closed. 
\end{proof}

We conclude with the universal property of $\overline{C}.$

\begin{prop}
\label{prop:C-correct}
Let ${C} \to H$ be a morphism to a scheme such that $H \times_T Y$
is equipped 
with an $H$-family of compatible pairs whose restriction to ${C} \times_T Y$ 
is the universal family
of ideal difference-conditions.
Assume $H$ equals the scheme theoretic image of ${C}$ in $H$.  Then 
the morphism $H \to P$ factors through $\overline{{C}}.$
\end{prop}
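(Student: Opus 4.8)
The plan is to classify the given $H$-family of compatible pairs by a morphism $g \colon H \to P$, and then run a short diagram chase using the preimage $g^{-1}(\overline{C})$ together with the minimality of scheme-theoretic images.

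First I would produce $g$. By Lemma~\ref{lem:zero-morphism} the scheme $P = \mathbf{V}(\psi)$ represents the functor of compatible pairs, so the $H$-family of compatible pairs on $H \times_T Y$ is classified by a unique morphism $g \colon H \to P$. Write $h \colon {C} \to H$ for the given morphism and $f \colon {C} \to P$ for the natural morphism $(\mathfrak{I}_{jk})_{jk} \mapsto (\mathcal{S}_\mathfrak{I}, (\mathfrak{I}_{jk})_{jk})$. The hypothesis that the restriction of the $H$-family along $h$ is the universal family of ideal difference-conditions says precisely that $g \circ h = f$, because $f$ is by construction the classifying map of the compatible pair $(\mathcal{S}_\mathfrak{I}, (\mathfrak{I}_{jk})_{jk})$ underlying that universal family.

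Next, by the definition of $\overline{{C}}$ as the scheme-theoretic image of $f$, the morphism $f$ factors as $C \xrightarrow{\bar f} \overline{{C}} \hookrightarrow P$, where $\iota \colon \overline{{C}} \hookrightarrow P$ is the defining closed immersion. I then form the closed subscheme $Z = g^{-1}(\overline{{C}}) = H \times_P \overline{{C}}$ of $H$. Since $g \circ h = f = \iota \circ \bar f$, the pair $(h, \bar f)$ agrees over $P$ and hence defines a morphism ${C} \to Z$ through which $h$ factors; that is, $h$ factors through the closed subscheme $Z \subseteq H$.

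Finally I would invoke the assumption that $H$ is the scheme-theoretic image of $h$. By the minimality in the defining property of scheme-theoretic images, this image is contained in every closed subscheme of $H$ through which $h$ factors, so $H \subseteq Z \subseteq H$ and therefore $Z = H$. The equality $g^{-1}(\overline{{C}}) = H$ is exactly the assertion that $g$ factors through the closed immersion $\iota \colon \overline{{C}} \hookrightarrow P$, which is what we want. The one step requiring genuine care is the identification $g \circ h = f$: one must check that restricting the compatible-pair family along $h$ recovers not merely the ideals but the quasi-spline sheaf $\mathcal{S}_\mathfrak{I}$ that they define, so that the restricted pair is literally the one classified by $f$. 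Once this is in hand, the conclusion is a formal consequence of the universal properties of the fiber product and of the scheme-theoretic image.
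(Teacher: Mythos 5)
Your proposal is correct and follows essentially the same route as the paper: both form the fiber product $H \times_P \overline{C}$, observe that $C$ factors through this closed subscheme of $H$, and conclude from the scheme-theoretic image hypothesis that it equals $H$, so that $H \to P$ factors through $\overline{C}$. Your write-up simply makes explicit what the paper's terse diagram leaves implicit — the classifying map $g$, the identity $g \circ h = f$, and the minimality property of the scheme-theoretic image — which is a faithful elaboration rather than a different argument.
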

\begin{proof}
$H \times_P \overline{{C}}$ equals $H$ because it is a closed subscheme of $H$ containing the image of ${C}:$
$$
\begin{tikzcd}
 {C} \arrow{dr} \arrow{r} & H \times_P \overline{{C}} \arrow{r} \arrow{d} &\overline{{C}} \arrow{d}\\
\  & H \arrow{r} &P.
\end{tikzcd}
$$
\end{proof}

\subsection{Degeneracy Loci and Rank Strata}
The construction of the moduli space gives a space in which one can move without changing the Hilbert polynomial.  It seems likely that one would be interested in the the whole Hilbert series, not just the polynomial. 
To address this, we present a way in which one can stratify the moduli space by pieces on which the Hilbert series is unchanged using degeneracy loci.  This can be done provided cohomology and base change commute for $\mathcal{O}_B^s(d)$ and 
$\bigoplus_{jk}   (\mathcal{O}_B/\mathfrak{I}_{jk})(d)$ for all $d.$  This condition holds in the most important case of $B = \mathbf{P}^n$
as shown in Corollary \ref{cor:deg-loci-pn}.

\begin{defn}
Given a morphism  $\phi \colon \mathcal{F} \to \mathcal{G},$ 
of flat coherent sheaves on a scheme $Z,$
 we say  
$$\operatorname{rank}(\phi) \leq r $$
if the induced morphism 
$$
\wedge^r \phi \colon \wedge^r \mathcal{F} \to \wedge^r \mathcal{G}
$$
is zero.  This map is a  global section 
$$
\wedge^r \phi \in \Gamma(Z,  \ \wedge^r (\mathcal{G} \otimes_{\mathcal{O}_Z} \mathcal{F}^\vee) \ ).
$$
The sheaf $\wedge^r (\mathcal{G} \otimes_{\mathcal{O}_Z} \mathcal{F}^\vee)$ is locally free, so 
$\wedge^r \phi$ defines a {\bf scheme of zeros} $(\wedge^r \phi)_0.$
This scheme is called the  {\bf $r^\text{th}$ degeneracy locus of $\phi$}, and we will denote it
by $DL_r(\phi).$
\end{defn}

\begin{prop}
\label{prop:degeneracy-loci}
Let $B$ be a projective, flat $Z$-scheme.  Consider ${s \choose 2}$ quasi-coherent ideals
$\mathfrak{I}_{jk} \subseteq \mathcal{O}_B$ with $Z$-flat quotients
$\mathcal{O}_B/\mathfrak{I}_{jk}.$
If $H^1(B_q, \bigoplus_{jk} \mathcal{O}_{B_q}/(\mathfrak{I}_{jk})_q)$ and
$H^1(B_q,\mathcal{O}_{B_q}^s)$
vanish for all $q \in Z$, then the locally closed subset on which  $\pi_*\mathcal{S}(d)$
has rank $\rho$ is 
$$
DL_r(\Delta(d)) \setminus DL_{r-1}(\Delta(d))
$$ 
where 
$$
\Delta(d) \colon \pi_*{\mathcal{O}_B}^s(d) \to  \pi_* \bigoplus_{jk}   (\mathcal{O}_B/\mathfrak{I}_{jk})(d)
$$
and 
$$
r = \operatorname{rank}( \pi_*{\mathcal{O}_B^s(d)}) - \rho.
$$
\end{prop}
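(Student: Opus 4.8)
The plan is to push the defining sequence of $\mathcal{S}=\mathcal{S}_\mathfrak{I}$ forward to $Z$, use the cohomological hypotheses to make the relevant direct images locally free, and then read off the rank stratification as a degeneracy-locus stratification of the resulting morphism of vector bundles. First I would record that both $\mathcal{O}_B^s$ and $\bigoplus_{jk}\mathcal{O}_B/\mathfrak{I}_{jk}$ are $Z$-flat: the former because $B$ is a flat $Z$-scheme, the latter by hypothesis. Twisting by $\mathcal{O}(d)$ preserves $Z$-flatness, and the $H^1$-vanishing hypotheses (applied after this twist, as holds automatically on $\mathbf{P}^n$; cf. Corollary \ref{cor:deg-loci-pn}) let me invoke Corollary \ref{cor:cbc} for each of the two sheaves. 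This yields that
$$E := \pi_*\mathcal{O}_B^s(d), \qquad F := \pi_*\textstyle\bigoplus_{jk}(\mathcal{O}_B/\mathfrak{I}_{jk})(d)$$
are locally free and that their formation commutes with base change, so that $\Delta(d)\colon E\to F$ is a genuine morphism of vector bundles and the degeneracy loci $DL_r(\Delta(d))$ of the preceding Definition are defined.

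The key identification is $\pi_*\mathcal{S}(d)=\ker\Delta(d)$. Since $\mathcal{O}(d)$ is invertible, tensoring the left-exact defining sequence $0\to\mathcal{S}\to\mathcal{O}_B^s\to\bigoplus_{jk}\mathcal{O}_B/\mathfrak{I}_{jk}$ with $\mathcal{O}(d)$ keeps it left exact, exhibiting $\mathcal{S}(d)$ as the kernel of the twisted $\Delta$. Applying the left-exact functor $\pi_*$, and using that $\Delta(d)$ is by definition $\pi_*$ of this twisted map, identifies $\pi_*\mathcal{S}(d)$ with $\ker\Delta(d)$ as a subsheaf of $E$. Note that no flatness of the intermediate image $\mathcal{G}$ enters this step.

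It then remains to translate ranks. Over a point $q\in Z$, base-change compatibility makes $\Delta(d)\otimes\mathbf{k}(q)$ the fiber map $\Delta(d)_q$, and rank--nullity gives $\dim_{\mathbf{k}(q)}\ker\Delta(d)_q = \operatorname{rank}E - \operatorname{rank}\Delta(d)_q$. Hence the locus where $\pi_*\mathcal{S}(d)$ has rank $\rho$ is exactly the locus where $\Delta(d)$ drops to rank $r=\operatorname{rank}(\pi_*\mathcal{O}_B^s(d))-\rho$; by the convention fixing the degeneracy loci this exact-rank locus is $DL_r(\Delta(d))\setminus DL_{r-1}(\Delta(d))$, and on each such (reduced) stratum $\ker\Delta(d)$ is a genuine subbundle of the announced rank.

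I expect the main obstacle to be the bookkeeping in this last step: precisely matching the sheaf-theoretic ``rank of $\pi_*\mathcal{S}(d)$'' with the fiberwise kernel dimension $\dim\ker\Delta(d)_q$, equivalently with $h^0(B_q,\mathcal{S}|_q(d))$, which is the quantity relevant to the Hilbert series. The fiber of a kernel does not commute with base change in general, and it is exactly the $H^1$-vanishing---through Corollary \ref{cor:cbc} applied to $\mathcal{O}_B^s$ and $\bigoplus_{jk}\mathcal{O}_B/\mathfrak{I}_{jk}$---that forces $\Delta(d)_q$ to be the map on global sections of the fiberwise $\Delta_q(d)$, so that $\ker\Delta(d)_q = H^0(B_q,\ker\Delta_q(d)(d))$. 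Verifying that this fiberwise kernel agrees with $\mathcal{S}|_q(d)$ (and, if the hypothesis is to be read for the untwisted sheaves only, checking the twist-by-$d$ version in the cases of interest) is where the genuine care is required; everything else is formal.
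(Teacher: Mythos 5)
Your proposal is correct and follows essentially the same route as the paper's proof: use the $H^1$-vanishing together with Corollary \ref{cor:cbc} to make $\pi_*\mathcal{O}_B^s(d)$ and $\pi_*\bigoplus_{jk}(\mathcal{O}_B/\mathfrak{I}_{jk})(d)$ locally free and compatible with base change, identify the exact-rank-$r$ locus of $\Delta(d)$ with $DL_r(\Delta(d))\setminus DL_{r-1}(\Delta(d))$, and recover $\rho$ from $r$ by rank--nullity. Your write-up is in fact more explicit than the paper's (notably the identification $\pi_*\mathcal{S}(d)=\ker\Delta(d)$ via left exactness, and the flagging that the $H^1$ hypothesis must be read for the twisted sheaves), but these are elaborations of the same argument, not a different one.
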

\begin{proof}
The vanishing of
$H^1(B_q, \bigoplus_{jk} \mathcal{O}_{B_q}/(\mathfrak{I}_{jk})_q)$ and
$H^1(B_q,\mathcal{O}_{B_q}^s)$
 plus the
Corollary \ref{cor:cbc}
 guarantee that 
$ \pi_*\bigoplus_{jk} (\mathcal{O}_B/\mathfrak{I}_{jk})(d)$ and 
$\pi_*{\mathcal{O}_B^s(d)}$ are locally free $\mathcal{O}_Z$-modules.

Locally, $\Delta(d)$ is a matrix, and its cokernel is flat over a scheme $Z' \to Z$
exactly when this matrix pulls back to a constant rank matrix over $Z'.$  This 
is the same a requiring  that $Z'$ maps into 
$
DL_r(\Delta(d)) \setminus DL_{r-1}(\Delta(d))
$
for some $r.$  

The formula relating $r$ and $\rho$ follows from the fact that if $\Delta(d)$
is constant rank, then $\mathcal{S}(d)$ is flat of the given rank.
\end{proof}

\begin{cor}
\label{cor:deg-loci-pn}
Set  $B=\mathbf{P}^n_Z$  and $\mathfrak{I}_{jk} = \mathcal{O}_B(-D_{jk})$ for relatively effective Cartier divisors $D_{jk}.$
If $n \neq 2$ or  $\deg(D_{jk}) < n-1$ for all $jk$, then 
the locus on which $\pi_*\mathcal{S}(d)$ has rank $\rho$ is  
$$
DL_r(\Delta(d)) \setminus DL_{r-1}(\Delta(d))
$$ 
where
$$
r = s{d + n \choose 2} - \rho.
$$
\end{cor}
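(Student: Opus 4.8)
The plan is to obtain Corollary \ref{cor:deg-loci-pn} as a direct specialization of Proposition \ref{prop:degeneracy-loci} to $B = \mathbf{P}^n_Z$, so the work is entirely in checking that proposition's hypotheses in this setting and then evaluating the rank appearing in its conclusion. First I would dispatch the structural hypotheses: $\mathbf{P}^n_Z$ is projective and flat over $Z$, and because each $D_{jk}$ is a relatively effective Cartier divisor the quotient $\mathcal{O}_B/\mathfrak{I}_{jk} = \mathcal{O}_{D_{jk}}$ is $Z$-flat (equivalently $\mathfrak{I}_{jk} = \mathcal{O}_B(-D_{jk}) \to \mathcal{O}_B$ is a universal inclusion). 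With these in place, only the two fiberwise first-cohomology vanishings remain to be verified, on each fiber $\mathbf{P}^n_{\mathbf{k}(q)}$ and for the twists $d$ entering $\Delta(d)$.

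For the summand $\mathcal{O}_B^s$ the vanishing is immediate: $\mathcal{O}_{B_q}^s(d)$ is a direct sum of copies of $\mathcal{O}_{\mathbf{P}^n_{\mathbf{k}(q)}}(d)$, and the Cohomology of Projective Space kills $H^1$ of $\mathcal{O}(d)$ outright (it is middle cohomology, $0 < 1 < n$, once $n \geq 2$, and vanishes for $n = 1$ in the relevant degrees). For the summand $\mathcal{O}_{D_{jk}}$ I would twist the defining sequence
\[
0 \to \mathcal{O}_{B_q}(-D_{jk})(d) \to \mathcal{O}_{B_q}(d) \to \mathcal{O}_{D_{jk}}(d) \to 0
\]
and read the long exact sequence: $H^1(\mathcal{O}_{D_{jk}}(d))$ is squeezed between $H^1(\mathcal{O}_{B_q}(d))$, already shown to vanish, and $H^2(\mathcal{O}_{B_q}(-D_{jk})(d))$.

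This last group is precisely where the dichotomy in the hypothesis is needed, and I expect it to be the only delicate point. By the Cohomology of Projective Space, $H^2(\mathbf{P}^n_{\mathbf{k}(q)}, \mathcal{O}(m))$ vanishes for every $m$ unless $n = 2$ --- it is middle cohomology when $n \geq 3$ and there is simply no $H^2$ when $n \leq 1$ --- so the case $n \neq 2$ is settled with no condition on the divisors. When $n = 2$ the obstruction $H^2(\mathbf{P}^2, \mathcal{O}(d - \deg D_{jk}))$ can be nonzero; here I would invoke the perfect pairing of the Cohomology of Projective Space (Serre duality on $\mathbf{P}^2$) to identify it with the dual of $H^0(\mathbf{P}^2, \mathcal{O}(\deg D_{jk} - d - 3))$, and use the hypothesis $\deg D_{jk} < n-1$ to push the twist into the range where this $H^0$ is zero for the degrees $d$ under consideration. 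Tracking exactly which twists survive in this $n = 2$ case is the heart of the argument.

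Once both vanishings hold, Proposition \ref{prop:degeneracy-loci} applies verbatim and identifies the rank-$\rho$ locus with $DL_r(\Delta(d)) \setminus DL_{r-1}(\Delta(d))$ for $r = \operatorname{rank}(\pi_*\mathcal{O}_B^s(d)) - \rho$. To finish I would evaluate this rank with the Cohomology of Projective Space: $\pi_*\mathcal{O}_{\mathbf{P}^n_Z}(d)$ is locally free of rank equal to the number of degree-$d$ forms in $n+1$ variables, so $\operatorname{rank}(\pi_*\mathcal{O}_B^s(d))$ is $s$ times that number; substituting gives the asserted value of $r$.
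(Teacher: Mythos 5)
Your proposal is correct and follows exactly the route the paper takes: its entire proof is the observation that the stated conditions on $n$ and the $D_{jk}$'s force the vanishing of the first cohomology of $\mathcal{O}_{B_q}^s$ and of the $\mathcal{O}_{B_q}/(\mathfrak{I}_{jk})_q$ on fibers, after which Proposition \ref{prop:degeneracy-loci} applies verbatim. Your long-exact-sequence and Serre-duality analysis of the $n=2$ case, and the identification of $\operatorname{rank}(\pi_*\mathcal{O}_B^s(d))$ via the Cohomology of Projective Space, simply supply the details the paper leaves implicit.
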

\begin{proof}
The conditions on $n$ and/or the $D_{jk}$'s guarantee these sheaves have no first cohomology.  So we may apply Proposition \ref{prop:degeneracy-loci}.
\end{proof}

\begin{rem}
To connect this with the moduli space, one might begin with the product of Hilbert schemes of degree $d_{jk}$
hypersurfaces
$\prod_{jk} \mathbf{P}(\Gamma(\mathbf{P}^n, \mathcal{O}(d_{jk}))^\vee)$ equipped with the bundle
 $\mathcal{H} = \operatorname{cok} \Delta.$
Then $Z$ would be taken to be the flattening stratification of $\mathcal{H}.$  These degeneracy loci then give the stratification of the moduli space on which the Hilbert series, not just the Hilbert polynomials are unchanged.
\end{rem}

\section{The Moduli of Quasi-Splines}
\label{section:moduli-quasi-splines}
In this section, we prove for a $T$-flat closed subscheme $Y$ of a projective space bundle $\mathbf{P}(\mathcal{V}),$
the functor which picks out a $T$-family of quasi-spline sheaves $\mathcal{S}$ and 
a section of a $d^\text{th}$ twist of $\mathcal{S}$
is representable, provided $d$ is sufficiently large.  The bound we find for $d$ depends on the Hilbert polynomials of 
$\mathcal{O}_Y$ and $\mathcal{S}.$   Since $Y$ is $T$-flat, we have a scheme $QS$ representing $\mathcal{QS},$
and this scheme equals $CFQS.$

The crucial thing we need is  a bound for the Castelnuovo-Mumford regularity of $\mathcal{S}$. This is why we assume $Y$ is a closed subscheme of a projective space bundle over $Z$.  This will allow use to eventually use the Gotzmann regularity theorem.

\begin{assumption}
In the statements below, we will assume that $B$ is a subscheme of a projective bundle  $\mathbf{P}(\mathcal{V}) = \operatorname{Proj}(\operatorname{Sym}^\bullet \mathcal{V})$ over $Z$ where $\mathcal{V}$ is flat and finite rank on $Z.$
\end{assumption}

To identify  $\mathcal{S}$ with an ideal sheaf, we first introduce an auxiliary projective space $K.$

\begin{defn}
Set
$$K = \operatorname{{Proj}} \ (\operatorname{Sym}^\bullet \mathcal{V})[E_1, \dotsc, E_s]$$ 
where $E$-variables are in degree $1$ 
(this is the projective closure of the product of $\mathbf{A}^{(s-1)}$ with the the affine cone over $\mathbf{P}(\mathcal{V})).$
\end{defn}

Now we define the space $N$ over which our ideal sheaf will live.

\begin{defn}
$B$ can be found as a subscheme of  the copy of $\mathbf{P}(\mathcal{V})$ in $K,$
cut out by 
$E_1 = \dotsm = E_s = 0.$
The first order infinitesimal neighborhood of $\mathbf{P}(\mathcal{V})$ is given by 
$$
N_{\mathbf{P}(\mathcal{V})} = \mathbf{V}(\langle E_1, \dotsc, E_s\rangle^2) \subseteq K.
$$
This is a scheme over $\mathbf{P}(\mathcal{V}),$ so we define
$$
N = B \times_{\mathbf{P}(\mathcal{V})} N_{\mathbf{P}(\mathcal{V})} \subseteq K.
$$  
\end{defn}

\begin{defn}
Consider the inclusion of $\Gamma_*(\mathcal{O}_K)$-modules
$$
(\Gamma_*(\mathcal{O}_K) / \Gamma_*(\mathcal{I}_B))^s(-1) \longrightarrow  \Gamma_*(\mathcal{O}_K) / \Gamma_*(\mathcal{I}_N)$$
which sends 
$$
(g_1, \dotsc, g_s)  \mapsto g_1E_1 + \dotsm +  g_1 E_s.
$$
The image of this map is $\Gamma_*(\mathcal{I}_B) / \Gamma_*(\mathcal{I}_N)$
The map induces an inclusion 
$$
\mathcal{O}_B(-1)^s \to \mathcal{O}_N.
$$
and an isomorphism $\mathcal{O}_B(-1)^s \cong \mathcal{I}_{B \subseteq N}.$
 $\mathcal{S}(-1)$ is carried to a
ideal of $\mathcal{O}_N$ we denote by $\mathcal{I}_{L \subseteq N},$  and we 
write  $L$ for the closed subscheme of $N$ defined by this  ideal.  
\end{defn}

This way we translate questions about $\mathcal{S}$ into questions about the 
ideal sheaf $\mathcal{I}_{L \subseteq N}.$

\begin{prop}
\label{prop:ideal-hilbert-polys}
$\pi_*\mathcal{S}(d) \cong \pi_*\mathcal{I}_{L \subseteq N}(d+1).$
\end{prop}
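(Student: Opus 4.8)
The plan is to obtain the statement as a formal consequence of the isomorphism $\mathcal{S}(-1)\cong\mathcal{I}_{L\subseteq N}$ already furnished by the construction of $L$, by twisting up by $\mathcal{O}_K(d+1)$ and then pushing forward to $Z$. Recall that the map $(g_1,\dots,g_s)\mapsto\sum_i g_iE_i$ was arranged precisely so that the degree-one multiplication by the $E_i$ is absorbed by the shift $(-1)$; this is what makes it a degree-preserving isomorphism of $\mathcal{O}_K$-modules carrying $\mathcal{S}(-1)$, viewed via $B\hookrightarrow N\hookrightarrow K$, onto the ideal $\mathcal{I}_{L\subseteq N}$. Given this, the proposition is entirely a matter of bookkeeping with twists.

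First I would record the compatibility of Serre twists that legitimizes the bookkeeping. Because both the fiber variables coming from $\mathcal{V}$ and the variables $E_1,\dots,E_s$ sit in degree one, the sheaf $\mathcal{O}_K(1)$ restricts to $\mathcal{O}_{\mathbf{P}(\mathcal{V})}(1)$ along $\mathbf{P}(\mathcal{V})=\mathbf{V}(E_1,\dots,E_s)\subseteq K$, hence to $\mathcal{O}_B(1)$ on $B$. Thus twisting by $\mathcal{O}_K(d+1)$ coincides, under $B\hookrightarrow K$, with twisting $B$-modules by $\mathcal{O}_B(d+1)$. Tensoring the isomorphism $\mathcal{S}(-1)\cong\mathcal{I}_{L\subseteq N}$ with the invertible sheaf $\mathcal{O}_K(d+1)$ and invoking the projection formula for the closed immersion $B\hookrightarrow K$ then yields
$$\mathcal{S}(d)\cong\mathcal{S}(-1)\otimes_{\mathcal{O}_K}\mathcal{O}_K(d+1)\cong\mathcal{I}_{L\subseteq N}\otimes_{\mathcal{O}_K}\mathcal{O}_K(d+1)=\mathcal{I}_{L\subseteq N}(d+1)$$
as $\mathcal{O}_K$-modules. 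Finally I would apply $\pi_*$: since the projection to $Z$ factors through the closed immersions of $B$ and of $N$ into $K$, pushforward is functorial along them, and the displayed isomorphism gives $\pi_*\mathcal{S}(d)\cong\pi_*\mathcal{I}_{L\subseteq N}(d+1)$, as claimed.

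An equivalent and perhaps cleaner route is to stay in the category of graded modules: applying $\Gamma_*$ to the construction's isomorphism gives an isomorphism of graded $\Gamma_*(\mathcal{O}_K)$-modules $\Gamma_*(\mathcal{S})(-1)\cong\Gamma_*(\mathcal{I}_{L\subseteq N})$, and comparing degree-$(d+1)$ pieces, together with the identities $\pi_*\mathcal{S}(d)=\Gamma_*(\mathcal{S})_d$ and $\pi_*\mathcal{I}_{L\subseteq N}(d+1)=\Gamma_*(\mathcal{I}_{L\subseteq N})_{d+1}$, produces the claim at once. Either way, the one genuinely delicate point is the twist bookkeeping above: checking that the single shift $(-1)$ precisely compensates the degree of the $E_i$, and that the Serre twists on $B$, $N$, and $K$ are mutually compatible restrictions of $\mathcal{O}_K(1)$. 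Once that is pinned down, the proposition follows formally from the construction of $L$, with no further input.
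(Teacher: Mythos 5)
Your proposal is correct and follows essentially the same route as the paper: both arguments reduce to the identification $\mathcal{S}(-1)\cong\mathcal{I}_{L\subseteq N}$ coming from the construction of $L$, followed by twisting up by $d+1$ and pushing forward to $Z$. The only difference of emphasis is that the paper's entire one-line proof is the justification that $\mathcal{S}(-1)\to\mathcal{O}_N$ is an inclusion (via exactness of $\tilde{(\cdot)}$), which you defer to the construction, while you instead spell out the twist and pushforward bookkeeping that the paper leaves implicit.
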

\begin{proof}
The $\mathcal{S}(-1) \to \mathcal{O}_N$ is an inclusion because $\tilde{(\cdot)}$ is exact.
\end{proof}

\begin{lem}
\label{lem:flat} 
If $B$ is $Z$-flat and $\mathcal{S}$ is a $Z$-family, then  $L$ and $N$ are $Z$-flat. 
\end{lem}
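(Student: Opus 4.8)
The plan is to realize each of $\mathcal{O}_N$ and $\mathcal{O}_L$ as the middle term of a short exact sequence whose two outer terms are already known to be $Z$-flat, and then to invoke the two-out-of-three property for relative flatness (the Proposition of Section~2 asserting that in $0 \to \mathcal{F} \to \mathcal{G} \to \mathcal{H} \to 0$, flatness of the quotient together with flatness of one of the other two forces all three to be flat).

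First I would dispatch $N$. Starting from the defining sequence $0 \to \mathcal{I}_{B \subseteq N} \to \mathcal{O}_N \to \mathcal{O}_B \to 0$, the identification $\mathcal{I}_{B \subseteq N} \cong \mathcal{O}_B(-1)^s$ exhibits the subsheaf as a twist of a finite direct sum of copies of $\mathcal{O}_B$; since $B$ is $Z$-flat and both twisting by a line bundle and taking finite direct sums preserve relative flatness, $\mathcal{I}_{B \subseteq N}$ is $Z$-flat. As $\mathcal{O}_B$ is $Z$-flat too, the Proposition immediately gives that $\mathcal{O}_N$ is $Z$-flat.

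For $L$ the subtlety—and the point I expect to be the main obstacle—is that the naive sequence $0 \to \mathcal{I}_{L \subseteq N} \to \mathcal{O}_N \to \mathcal{O}_L \to 0$ is useless: it is exactly the configuration the Proposition does \emph{not} cover, since knowing the sub and the ambient sheaf are flat says nothing about the quotient. Instead I would exploit the inclusion of ideals $\mathcal{I}_{L \subseteq N} \subseteq \mathcal{I}_{B \subseteq N}$, which holds because $\mathcal{S}(-1) \subseteq \mathcal{O}_B(-1)^s$ under the identifications above (equivalently $B \subseteq L$ inside $N$). Passing to quotients by $\mathcal{I}_{L \subseteq N}$ produces
$$0 \to \mathcal{I}_{B \subseteq N}/\mathcal{I}_{L \subseteq N} \to \mathcal{O}_L \to \mathcal{O}_B \to 0,$$
and since $\tilde{(\cdot)}$ is exact the subsheaf is $(\mathcal{O}_B^s/\mathcal{S})(-1) = \mathcal{G}(-1)$, the twist of the quasi-spline cokernel $\mathcal{G}$.

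The key input is then that $\mathcal{G}$ is $Z$-flat: because $B$ is flat and projective over $Z$ and $\mathcal{S}$ is a $Z$-family, Lemma~\ref{lem:flat-cok} gives precisely that the cokernel $\mathcal{G}$ of $\mathcal{S} \to \mathcal{O}_B^s$ is $Z$-flat, whence so is $\mathcal{G}(-1)$. With $\mathcal{O}_B$ also $Z$-flat, the Proposition applied to the displayed sequence yields that $\mathcal{O}_L$ is $Z$-flat, i.e.\ $L$ is $Z$-flat. Once the chain $\mathcal{I}_{L \subseteq N} \subseteq \mathcal{I}_{B \subseteq N} \subseteq \mathcal{O}_N$ and the identification of its successive quotients with $\mathcal{G}(-1)$ and $\mathcal{O}_B$ are in hand, both flatness conclusions are immediate; the only genuine idea is recognizing that $\mathcal{O}_L$ must be built as an extension of $\mathcal{O}_B$ by $\mathcal{G}(-1)$ rather than read off the ideal of $L$ directly.
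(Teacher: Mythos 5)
Your proof is correct and is essentially the paper's own argument: the paper simply observes that both of your extensions split, writing $\mathcal{O}_N \cong \mathcal{O}_B \oplus \mathcal{O}_B(-1)^s$ and $\mathcal{O}_L \cong \mathcal{O}_B \oplus \mathcal{G}(-1)$ as $\mathcal{O}_B$-modules, and cites Lemma \ref{lem:flat-cok} for the $Z$-flatness of $\mathcal{G}$ --- exactly your decomposition and your key input. Whether one uses the split direct sums or, as you do, the unsplit sequences together with the two-out-of-three proposition for relative flatness makes no material difference.
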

\begin{proof}
As $\mathcal{O}_B$-modules
$\mathcal{O}_N \cong \mathcal{O}_B \oplus \mathcal{O}_B(-1)^s,$
and
 $\mathcal{O}_L \cong \mathcal{O}_B \oplus \mathcal{G}(-1).$
 We know $\mathcal{G}$ is $Z$-flat by Lemma \ref{lem:flat-cok}.
\end{proof}

\begin{prop} 
If $B$ is  $Z$-flat and $\mathcal{S}$ is a $Z$-family, then
$\pi_* \mathcal{S}(d)$
is $Z$-flat of rank $p_\mathcal{S}(d)$ 
 provided 
that $d > $ the maximum of the  Castelnuovo-Mumford regularities of the ideal sheaves $\mathcal{I}_N$
and $\mathcal{I}_L.$ 
\end{prop}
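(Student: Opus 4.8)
The plan is to transport the statement from $\mathcal{S}$ to the relative ideal sheaf $\mathcal{I}_{L\subseteq N}$ by means of Proposition \ref{prop:ideal-hilbert-polys}, which identifies $\pi_*\mathcal{S}(d)$ with $\pi_*\mathcal{I}_{L\subseteq N}(d+1)$, and then to feed a fiberwise regularity bound into cohomology and base change. First I would collect the relevant flatness. By Lemma \ref{lem:flat} both $N$ and $L$ are $Z$-flat, so $\mathcal{O}_N$ and $\mathcal{O}_L$ are relatively flat; since $\mathcal{V}$ is $Z$-flat the ambient $K$ is $Z$-flat as well. Applying the proposition on short exact sequences of relatively flat sheaves to
$$
0 \to \mathcal{I}_{L\subseteq N} \to \mathcal{O}_N \to \mathcal{O}_L \to 0
$$
shows $\mathcal{I}_{L\subseteq N}$ is $Z$-flat, and the same principle applied to the presentations of $\mathcal{O}_N$ and $\mathcal{O}_L$ by $\mathcal{O}_K$ shows $\mathcal{I}_N$ and $\mathcal{I}_L$ are $Z$-flat. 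Flatness is exactly what is needed to guarantee that each of these sequences stays exact after restriction to a fiber $q\in Z$, so that $\mathcal{I}_{L\subseteq N}|_q$ is genuinely the ideal of $L_q$ in $N_q$, and similarly for $\mathcal{I}_N|_q$ and $\mathcal{I}_L|_q$.

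The heart of the argument is a regularity bound. Reading the hypotheses fiberwise, put $m = \max(\operatorname{reg}\mathcal{I}_N,\operatorname{reg}\mathcal{I}_L)$, so that $\mathcal{I}_N|_q$ and $\mathcal{I}_L|_q$ are $m$-regular for every $q$. The inclusion $\mathcal{I}_N\subseteq\mathcal{I}_L$ of ideals of $\mathcal{O}_K$ has quotient precisely $\mathcal{I}_{L\subseteq N}$, giving the short exact sequence
$$
0 \to \mathcal{I}_N \to \mathcal{I}_L \to \mathcal{I}_{L\subseteq N} \to 0.
$$
Restricting to the fiber over $q$ (permissible by the flatness just established) and chasing the long exact cohomology sequence, the standard estimate for the regularity of a quotient gives $\operatorname{reg}(\mathcal{I}_{L\subseteq N}|_q)\le\max(\operatorname{reg}(\mathcal{I}_L|_q),\operatorname{reg}(\mathcal{I}_N|_q)-1)\le m$. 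I expect this bookkeeping to be the main obstacle: one must be careful that the regularity hypotheses hold on every fiber (not merely generically) and that the sequences are restricted rather than naively pulled back, which is legitimate only because the relevant cokernels are flat.

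With $\mathcal{I}_{L\subseteq N}|_q$ shown to be $m$-regular for all $q$, the conclusion is formal. Since $d>m$, the theorem of Castelnuovo and Mumford yields $H^1(\mathcal{I}_{L\subseteq N}|_q(d+1))=0$ for every $q$; Corollary \ref{cor:cbc} then makes $\pi_*\mathcal{I}_{L\subseteq N}(d+1)$ a $Z$-flat, hence locally free, coherent sheaf with $\pi_*\mathcal{I}_{L\subseteq N}(d+1)|_q\cong H^0(\mathcal{I}_{L_q\subseteq N_q}(d+1))$. Through Proposition \ref{prop:ideal-hilbert-polys} this gives the $Z$-flatness of $\pi_*\mathcal{S}(d)$. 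Finally, the vanishing of the higher cohomology of $\mathcal{I}_{L\subseteq N}|_q(d+1)$ in this range lets me compute the fiber dimension as an Euler characteristic, $\dim_{\mathbf{k}(q)}H^0(\mathcal{I}_{L_q\subseteq N_q}(d+1))=\chi(\mathcal{I}_{L_q\subseteq N_q}(d+1))$, and the isomorphism $\mathcal{S}(d)\cong\mathcal{I}_{L\subseteq N}(d+1)$ identifies this Euler characteristic with $p_\mathcal{S}(d)$. Hence $\pi_*\mathcal{S}(d)$ is locally free of rank $p_\mathcal{S}(d)$, as required.
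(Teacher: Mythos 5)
Your proof is correct and follows essentially the same route as the paper: both rest on the exact sequence $0 \to \mathcal{I}_N \to \mathcal{I}_L \to \mathcal{I}_{L \subseteq N} \to 0$, the flatness supplied by Lemma \ref{lem:flat}, and Castelnuovo-Mumford regularity feeding into Cohomology and Base Change via Corollary \ref{cor:cbc}. The only differences are cosmetic: you make the fiberwise regularity bound for the quotient explicit and compute the rank as an Euler characteristic on fibers, whereas the paper obtains the rank as the difference $\operatorname{rank}(\pi_*\mathcal{I}_L(d+1)) - \operatorname{rank}(\pi_*\mathcal{I}_N(d+1))$ and then invokes the Hilbert polynomial identities.
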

\begin{proof}
Consider the direct image of the exact sequence
$$
0 \to \mathcal{I}_N(d+1) \to \mathcal{I}_L(d+1) \to \mathcal{I}_{L \subseteq N}(d+1) \to 0.
$$ 
Lemma \ref{lem:flat} and
the isomorphism $\mathcal{I}_{L \subseteq N}(d+1) \cong \mathcal{S}(d)$ imply 
these sheaves are $Z$-flat.
$d > $ the maximum of the  Castelnuovo-Mumford regularities of $\mathcal{I}_N$
and $\mathcal{I}_L,$ so we may apply the Cohomology and Base Change Theorem 
to conclude  $R^i \pi_* \mathcal{I}_{L \subseteq N}(d+1) =0$ for all $i >0,$
and $\pi_* \mathcal{I}_{L \subseteq N}(d+1)$ is $Z$-flat.
 
$\pi_*\mathcal{I}_N(d+1)$ and $\pi_*\mathcal{I}_L(d+1)$ are also $Z$-flat, so
$$
\operatorname{rank}(\pi_* \mathcal{S}(d)) = 
\operatorname{rank}(\pi_*\mathcal{I}_L(d+1))-
\operatorname{rank}(\pi_*\mathcal{I}_N(d+1)). 
$$
Since we are above the necessary Castelnuovo-Mumford regularities, the right-hand side equals  $p_{\mathcal{I}_L}(d+1) - p_{\mathcal{I}_N}(d+1).$
The relevant polynomials polynomials satisfy $p_\mathcal{S}(d) = p_{\mathcal{I}_{L \subseteq N}}(d+1) = p_{\mathcal{I}_L}(d+1) - p_{\mathcal{I}_N}(d+1),$ so $p_\mathcal{S}(d) = \operatorname{rank}(\pi_* \mathcal{S}(d)).$
\end{proof}

Now that we have formulated the regularity of $\mathcal{S}$ in terms of 
ideal sheaves on a projective space, we can use
Gotzmann regularity to give a bound for Castelnuovo-Mumford regularity of 
$\mathcal{S}.$  

\begin{lem}
\label{lem:hilbert-polynomials}
If $B$ is $Z$-flat, the Castelnuovo-Mumford regularities of 
$\mathcal{I}_N$
and $\mathcal{I}_L$ are bounded from above  by the maximum of the Gotzmann numbers of 
the 
Hilbert polynomials
of $\mathcal{O}_N$ and $\mathcal{O}_L$. 
Furthermore, the
  Hilbert polynomials of 
  $\mathcal{O}_N$ and $\mathcal{O}_L$
  can be expressed in terms of the Hilbert polynomials of $\mathcal{O}_B$ and $\mathcal{S}$ 
(or equivalently $\mathcal{O}_B$ and $\mathcal{G}$):
\begin{itemize}
\item $ \begin{array}{rcl} p_{\mathcal{O}_N}(t) & = & p_{\mathcal{O}_B}(t) + s \ p_{\mathcal{O}_B}(t-1), \end{array}$  and 
\item $\begin{array}{rcl}
p_{\mathcal{O}_L}(t) & = & p_{\mathcal{O}_N}(t) - p_\mathcal{S}(t-1)  \\
& = & p_{\mathcal{O}_B}(t) + s \, p_{\mathcal{O}_B}(t-1) - p_\mathcal{S}(t-1) \\
& = &  p_{\mathcal{O}_B}(t) + p_{\mathcal{G}}(t-1).
\end{array}$ 
\end{itemize}
\end{lem}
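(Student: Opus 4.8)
The plan is to handle the two assertions separately: first the Castelnuovo--Mumford regularity bound, then the three Hilbert polynomial identities, which are purely formal consequences of additivity once the relevant module decompositions are recorded.

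For the regularity bound I would begin by invoking Lemma \ref{lem:flat}: since $B$ is $Z$-flat and $\mathcal{S}$ is a $Z$-family, both $N$ and $L$ are $Z$-flat closed subschemes of $K$. The point of flatness is that it lets me pass to fibers. Over a point $q \in Z$ the fiber $K_q$ is a projective space over $\mathbf{k}(q)$ (the symmetric algebra of a finite free module, adjoined the $E_i$, becomes an honest polynomial ring after base change to a field), and flatness forces the fiberwise Hilbert polynomials of $N_q$ and $L_q$ to agree with the point-independent $p_{\mathcal{O}_N}$ and $p_{\mathcal{O}_L}$. Gotzmann Regularity, applied on each fiber, then says $\mathcal{I}_{N_q}$ is $m_N$-regular and $\mathcal{I}_{L_q}$ is $m_L$-regular, where $m_N, m_L$ are the Gotzmann numbers of $p_{\mathcal{O}_N}, p_{\mathcal{O}_L}$; here I would lean on the remark that the Gotzmann number depends only on the polynomial, not on the ambient dimension, so the varying dimension of $K_q$ is harmless. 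Since the operative notion of relative regularity here is fiberwise regularity (that being exactly what feeds Cohomology and Base Change in the preceding proposition), I conclude that $\mathcal{I}_N$ and $\mathcal{I}_L$ are both $\max(m_N, m_L)$-regular.

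For the Hilbert polynomials I would use the $\mathcal{O}_B$-module descriptions already isolated in the proof of Lemma \ref{lem:flat}. From $\mathcal{O}_N \cong \mathcal{O}_B \oplus \mathcal{O}_B(-1)^s$ — reflecting that $N$ is the first-order neighborhood, so its structure sheaf is $\mathcal{O}_B$ together with the degree-one span of $E_1, \dots, E_s$, each $E_i$ contributing a shifted copy $\mathcal{O}_B(-1)$ — additivity and the identity $p_{\mathcal{O}_B(-1)}(t) = p_{\mathcal{O}_B}(t-1)$ give $p_{\mathcal{O}_N}(t) = p_{\mathcal{O}_B}(t) + s\, p_{\mathcal{O}_B}(t-1)$. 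For $\mathcal{O}_L$ I would record the two promised routes: the short exact sequence $0 \to \mathcal{I}_{L \subseteq N} \to \mathcal{O}_N \to \mathcal{O}_L \to 0$ together with $\mathcal{I}_{L \subseteq N} \cong \mathcal{S}(-1)$ yields $p_{\mathcal{O}_L}(t) = p_{\mathcal{O}_N}(t) - p_\mathcal{S}(t-1)$, and substituting the formula for $p_{\mathcal{O}_N}$ produces the middle expression; alternatively the decomposition $\mathcal{O}_L \cong \mathcal{O}_B \oplus \mathcal{G}(-1)$, gotten by quotienting $\mathcal{O}_B \oplus \mathcal{O}_B(-1)^s$ by the copy of $\mathcal{S}(-1)$ inside the second summand, gives $p_{\mathcal{O}_L}(t) = p_{\mathcal{O}_B}(t) + p_\mathcal{G}(t-1)$ directly. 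The two agree by the additivity relation $p_\mathcal{S}(t) + p_\mathcal{G}(t) = s\, p_{\mathcal{O}_B}(t)$ coming from $0 \to \mathcal{S} \to \mathcal{O}_B^s \to \mathcal{G} \to 0$, evaluated at $t-1$.

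The only genuinely delicate point is the relative reading of regularity in the first part: Gotzmann Regularity is a statement over a field, so the whole content is the reduction to fibers, and that reduction is exactly what Lemma \ref{lem:flat} buys me by making the fiberwise Hilbert polynomials constant and equal to $p_{\mathcal{O}_N}$ and $p_{\mathcal{O}_L}$. Everything else is bookkeeping with additive invariants, so I expect no further obstacle beyond keeping the degree shifts straight.
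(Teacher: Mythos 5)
Your proposal is correct and follows essentially the same route as the paper's (very terse) proof: the paper likewise cites Gotzmann Regularity for the bound, and for the polynomial identities it cites exactly the identifications $\mathcal{O}_N \cong \mathcal{O}_B \oplus \mathcal{O}_B(-1)^s$ and $\mathcal{O}_L \cong \mathcal{O}_N/\mathcal{S}(-1)$ together with the sequence $0 \to \mathcal{S} \to \mathcal{O}_B^s \to \mathcal{G} \to 0$. Your fiberwise reduction making the relative reading of Gotzmann precise, and the alternative decomposition $\mathcal{O}_L \cong \mathcal{O}_B \oplus \mathcal{G}(-1)$ borrowed from Lemma \ref{lem:flat}, are precisely the details the paper leaves implicit.
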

\begin{proof}
The first statement is part of the Gotzmann regularity theorem.
For the rest, we have the identifications
\begin{itemize}
\item  $\mathcal{O}_N \cong \mathcal{O}_B \oplus \mathcal{O}_B(-1)^s,$ and 
\item  $\mathcal{O}_L \cong \mathcal{O}_N / \mathcal{S}(-1)$
\end{itemize}
and the exact sequence
 $$
0 \to \mathcal{S} \to \mathcal{O}_B^s  \to \mathcal{G} \to 0.
 $$
\end{proof}

Finally, we can use this bound to guarantee the representability of the moduli of quasi-splines.

\begin{thm}
\label{thm:sections}
Assume $Y$ is $T$-flat and  is a closed subscheme of a projective space bundle $\mathbf{P}(\mathcal{V})$ over $T.$ Let $d$ be sufficiently large so that $\pi_*\mathcal{S}(d)$
is flat.  For instance, $d \geq$ the maximum of the Gotzmann numbers of the
polynomials 
$
p_{\mathcal{O}_Y}(t) + s \ p_{\mathcal{O}_Y}(t-1)
$ 
and
$
 p_{\mathcal{O}_Y}(t) + p_{\mathcal{G}}(t-1).
$
The functor on locally Noetherian $T$-schemes
$$
Z \mapsto \{ (\sigma, \mathcal{S}) \ | \ \sigma \in \Gamma(Z, \mathcal{S}(d)) \text{ and } \mathcal{S} \in {QS}_{p_{\mathcal{O}_Y},p_\mathcal{G}}(Z) \}
$$
is represented by  $\operatorname{{Spec}} (\operatorname{Sym}^\bullet \  (\pi_* \mathcal{S}(d))^\vee)$ over 
$
{QS}_{p_{\mathcal{O}_Y},p_\mathcal{G}}.
$
\end{thm}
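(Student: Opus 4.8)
The plan is to identify the functor with the functor of points of $\operatorname{Spec}\operatorname{Sym}^\bullet \mathcal{V}_d$ by building a natural bijection out of cohomology and base change together with the universal property of the relative symmetric algebra. Write $QS = QS_{p_{\mathcal{O}_Y}, p_\mathcal{G}}$, let $\pi \colon QS \times_T Y \to QS$ be the projection, $\mathcal{S}$ the universal quasi-spline sheaf, and $\mathcal{V}_d = (\pi_*\mathcal{S}(d))^\vee$. By Theorem \ref{thm:main} a point of the functor over a locally Noetherian $T$-scheme $Z$ is the same datum as a morphism $g \colon Z \to QS$ (classifying the $Z$-family $\mathcal{S}_Z$, the pullback of $\mathcal{S}$) together with a section $\sigma \in \Gamma(Z \times_T Y, \mathcal{S}_Z(d))$.

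First I would record the two structural inputs supplied by the preceding results. Since $d$ exceeds the Gotzmann bound of Lemma \ref{lem:hilbert-polynomials}, the preceding proposition gives that $\pi_*\mathcal{S}(d)$ is $Z$-flat of rank $p_\mathcal{S}(d)$, hence locally free, and its proof yields $R^i\pi_*\mathcal{S}(d) = 0$ for $i>0$; in particular $H^1$ vanishes on every fibre, so Corollary \ref{cor:cbc} applies. Consequently cohomology commutes with base change in degree zero: for any $g \colon Z \to QS$ the base-change map
$$
g^*\bigl(\pi_*\mathcal{S}(d)\bigr) \xrightarrow{\ \sim\ } \pi_{Z*}\bigl(\mathcal{S}_Z(d)\bigr)
$$
is an isomorphism, where $\pi_Z \colon Z \times_T Y \to Z$ is the projection. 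Taking global sections identifies the set of admissible $\sigma$ with $\Gamma\bigl(Z, g^*(\pi_*\mathcal{S}(d))\bigr)$.

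Next I would invoke the universal property of the relative spectrum. For the quasi-coherent sheaf $\mathcal{V}_d$ on $QS$, morphisms $Z \to \operatorname{Spec}\operatorname{Sym}^\bullet \mathcal{V}_d$ lying over a fixed $g \colon Z \to QS$ correspond by adjunction to $\mathcal{O}_Z$-linear maps $g^*\mathcal{V}_d \to \mathcal{O}_Z$, that is to $\Gamma\bigl(Z, (g^*\mathcal{V}_d)^\vee\bigr)$. Because $\pi_*\mathcal{S}(d)$ is locally free, $g^*\mathcal{V}_d = (g^*\pi_*\mathcal{S}(d))^\vee$, and double duality gives $(g^*\mathcal{V}_d)^\vee \cong g^*\pi_*\mathcal{S}(d)$. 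Chaining this with the base-change isomorphism above produces a bijection between morphisms $Z \to \operatorname{Spec}\operatorname{Sym}^\bullet \mathcal{V}_d$ and pairs $(\sigma, \mathcal{S}_Z)$, which is exactly the asserted representability; note that restricting to the component $QS_{p_{\mathcal{O}_Y}, p_\mathcal{G}}$ is what forces $\pi_*\mathcal{S}(d)$ to be flat of constant rank, so that this relative spectrum is the expected total space of a vector bundle.

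The only formal point requiring care is naturality of the bijection in $Z$: I would check that the base-change isomorphism and the $\operatorname{Spec}\operatorname{Sym}$ adjunction are each compatible with further pullback along $Z' \to Z$. The first holds because the regularity bound is preserved under base change, so Corollary \ref{cor:cbc} continues to apply and the base-change maps compose; the second is formal. The genuine mathematical content, and the step most likely to fail without the hypotheses, is the vanishing $R^1\pi_*\mathcal{S}(d) = 0$: without it $\pi_*\mathcal{S}(d)$ would be neither locally free nor compatible with base change, and the identification of sections with maps into $\operatorname{Spec}\operatorname{Sym}^\bullet \mathcal{V}_d$ would break down. This is precisely why $d$ is taken past the Gotzmann numbers of $p_{\mathcal{O}_N}$ and $p_{\mathcal{O}_L}$.
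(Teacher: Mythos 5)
Your proposal is correct and follows essentially the same route as the paper's proof: local freeness of $\pi_*\mathcal{S}(d)$ secured by the regularity/Gotzmann bound, the $\operatorname{Spec}\operatorname{Sym}$ adjunction, and double duality to convert maps $(\pi_*\mathcal{S}(d))^\vee \to \mathcal{O}_Z$ into sections. The only difference is that you spell out the cohomology-and-base-change identification $g^*(\pi_*\mathcal{S}(d)) \cong \pi_{Z*}(\mathcal{S}_Z(d))$, which the paper uses silently when it conflates the pullback of $(\pi_*\mathcal{S}(d))^\vee$ to $Z$ with $(\pi_{Z*}\mathcal{S}_Z(d))^\vee$.
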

\begin{proof}
$\pi_*\mathcal{S}(d)$ is locally free over 
${QS}_{p_{\mathcal{O}_Y},p_\mathcal{G}}.$  A morphism $Z \to 
\operatorname{{Spec}} (\operatorname{Sym}^\bullet \  (\pi_* \mathcal{S}(d))^\vee)$
produces a point $\mathcal{S} \in {QS}(Z)$ as well as a homomorphism over $Z$
$$
\mathcal{S}(d)^\vee \to  \mathcal{O}_Z. 
$$
Dually we have $\mathcal{O}_Z^\vee \to (\mathcal{S}(d)^\vee)^\vee.$ 
$\mathcal{O}_Z^\vee$ is canonically isomorphic to $\mathcal{O}_Z$
and $(\mathcal{S}(d)^\vee)^\vee$ is canonically isomorphic to 
$\mathcal{S}(d).$ So we obtain from the map 
$\mathcal{O}_Z \to \mathcal{S}(d)$ our global section $\sigma.$ 
This process is reversible,  so we are done.
\end{proof}

\appendix
\section{Billera-Rose Homogenization}

We recall the homogenization procedure of \cite{billera-rose:1991} in scheme theoretic language. 
Natural algebraic objects in homogenization and projective compactification are filtered algebras and modules. 
Given a quasi-spline sheaf $S$ over  $A = \operatorname{Spec} \mathcal{A},$ where $\mathcal{A}$
is a filtered algebra, this procedure produces
graded module  $\,^hS$ over the  homogenization $\widehat{\mathcal{A}}$ of $\mathcal{A},$
and a sheaf 
$\,^h\tilde{S}$ over the projective closure $\widehat{A} = \operatorname{Proj}(\widehat{\mathcal{A}}).$ 

The graded components of 
$\,^hS$ are isomorphic to the degree bounded pieces $S_{\leq d}$ of $S$.
Provided the homogenization $\widehat{\mathcal{A}}$ of $\mathcal{A}$ is isomorphic to
$\Gamma_*(\mathcal{O}_{\widehat{A}})$ these graded components  and degree bounded
pieces are isomorphic to the global sections $\Gamma(\widehat{A}, \,^h\tilde{S}(d)).$

These constructions are compatible with 
ideal difference-conditions in the sense that if $S$ is defined by $(I_{jk})_{jk},$ then 
\begin{itemize}
\item $\,^h{S}$ is defined by $(\,^h{I}_{jk})_{jk},$
\item $\,^h\tilde{S}$ is defined by $(\,^h\tilde{I}_{jk})_{jk},$ 
\item and $\Gamma_*(\,^h\tilde{S})$
is defined by $(\Gamma_*(\,^h\tilde{I}_{jk}))_{jk}.$
\end{itemize}

\begin{defn}
A filtered $\mathcal{O}_Z$-algebra  $\mathcal{A}$ is a
quasi-coherent sheaf of  $\mathcal{O}_Z$-algebras, equipped with 
a quasi-coherent $\mathcal{O}_Z$-submodule
$
\mathcal{A}_{\leq d}
$
 for each $d \in \mathbf{N}$
such that 
\begin{itemize}
\item $\mathcal{O}_Z \to \mathcal{A}_{\leq 0},$
\item $\mathcal{A}_{\leq d}
\subseteq \mathcal{A}_{\leq d+1},$  
\item $\bigcup_d \mathcal{A}_{\leq d} = \mathcal{A},$ and
\item $\mathcal{A}_{\leq d} \cdot \mathcal{A}_{\leq d'} \subseteq \mathcal{A}_{\leq d+d'}.$
\end{itemize}
\end{defn}

\begin{defn}
A filtered module over a filtered $\mathcal{O}_Z$-algebra $\mathcal{A}$ is a
quasi-coherent sheaf of  $\mathcal{A}$-modules $M$, 
equipped with a quasi-coherent $\mathcal{O}_Z$-submodule
$
{M}_{\leq d}
$
for each $d \in \mathbf{Z}$
such that 
\begin{itemize}
\item $M_{\leq d}
\subseteq M_{\leq d+1},$  
\item $\bigcup_d M_{\leq d} = M,$ and
\item $\mathcal{A}_{\leq d} \cdot M_{\leq d'} \subseteq M_{\leq d+d'}.$
\end{itemize}
\end{defn}

\begin{defn}
Given a filtered module over a filtered   $\mathcal{O}_Z$-algebra $\mathcal{A},$
we define the {\bf homogenization}
$$
\,^hM = \bigoplus_d M_{\leq d} \cdot z^d
$$
where $z$ is a ``dummy variable.''   Morally, we think of an element $m \in M$ as 
$m =  m(\frac{x_1}{z}, \dotsc, \frac{x_n}{z})$ where $\frac{x_1}{z}, \dotsc, \frac{x_n}{z}$
are (not-necessarily-algebraically-independent) ``coordinates'' in $\mathcal{A}.$
\end{defn}

\begin{defn}
We denote 
$\widehat{\mathcal{A}} = \,^h\mathcal{A}.$  This is a graded $\mathcal{O}_Z$-algebra, and within $\widehat{\mathcal{A}_1}$ there is an element $z = 1 \cdot z.$  If $N$ is a graded $\widehat{\mathcal{A}}$-module, we denote by $N|_{z=1}$
the module $N / \langle z -1 \rangle \cdot N.$  This module is filtered with
$$
(N|_{z=1})_{\leq d} = \text{ the image of $N_d$ under the quotient map}.
$$
\end{defn}

\begin{prop}
\label{prop:equiv}
Homogenization $M \mapsto \,^hM$ is an exact, fully faithful functor
from filtered  $\mathcal{A}$-modules to graded $\widehat{\mathcal{A}}$-modules. 
Furthermore the assignment $N \mapsto N|_{z=1}$ is a functor from 
graded $\widehat{\mathcal{A}}$-modules to filtered  $\mathcal{A}$-modules
which is left adjoint to homogenization.  The counit 
$$\epsilon \colon (\,^hM)|_{z=1} \to M$$
is a natural isomorphism, and the unit
$$
\eta \colon N \to \,^h(N|_{z=1})
$$
is surjective with kernel equal to the saturation $(0:z^\infty) \subseteq N.$

\end{prop}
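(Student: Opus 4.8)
The plan is to reduce every claim to a degreewise statement, using that the degree-$d$ part of $\,^hM$ is $M_{\leq d}$ (decorated by $z^d$) and that the graded structure is governed entirely by the action of $z\in\widehat{\mathcal{A}}_1$, whose effect on $\,^hM$ is exactly the filtration inclusion $M_{\leq d}\hookrightarrow M_{\leq d+1}$. For exactness, a short exact sequence $0\to M'\to M\to M''\to 0$ of filtered modules (with $M'$ carrying the induced filtration $M'_{\leq d}=M'\cap M_{\leq d}$ and $M''$ the quotient filtration) produces in each degree the exact sequence $0\to M'\cap M_{\leq d}\to M_{\leq d}\to \operatorname{im}(M_{\leq d}\to M'')\to 0$, and assembling over $d$ gives exactness of $\,^h(\cdot)$. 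For faithfulness, $\,^hf$ acts by $m z^d\mapsto f(m)z^d$, so $\,^hf=0$ forces $f$ to vanish on every $M_{\leq d}$, hence on $M=\bigcup_d M_{\leq d}$. For fullness, a graded map $g\colon\,^hM\to\,^hM'$ commutes with $z$; writing its degree-$d$ part as $g_d\colon M_{\leq d}\to M'_{\leq d}$, this commutation reads $g_{d+1}|_{M_{\leq d}}=g_d$, so the $g_d$ glue to a single filtered map $f\colon M\to M'$ with $\,^hf=g$, and commuting $g$ with the elements $a z^e$ ($a\in\mathcal{A}_{\leq e}$) upgrades $f$ to an $\mathcal{A}$-linear map.

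Next I would set up the adjunction $(\cdot)|_{z=1}\dashv\,^h(\cdot)$ by producing the unit and counit and checking the two triangle identities directly on elements. The counit $\epsilon\colon(\,^hM)|_{z=1}\to M$ is a natural isomorphism because imposing $z=1$ identifies $m z^d$ with $m z^{d+1}$, collapsing $\,^hM$ to the colimit $\varinjlim_d M_{\leq d}=\bigcup_d M_{\leq d}=M$; moreover the filtration-by-image on the left recovers the original filtration of $M$, so $\epsilon$ is a filtered isomorphism. The unit $\eta\colon N\to\,^h(N|_{z=1})$ sends $n\in N_d$ to $\bar n\, z^d$, where $\bar n$ is the class of $n$ in $N|_{z=1}$; since $(N|_{z=1})_{\leq d}$ is by definition the image of $N_d$, the degree-$d$ component of $\eta$ is onto, so $\eta$ is surjective.

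The hard part will be computing $\ker\eta$ and matching it with the saturation $(0:z^\infty)$. One containment is formal: if $z^k n=0$ then $\bar n=\overline{z^k n}=0$ in $N|_{z=1}$, so $n\in\ker\eta$. For the reverse I would take a homogeneous $n\in N_d$ with $\bar n=0$, i.e. $n=(z-1)m$ for some $m=\sum_e m_e$, and compare homogeneous components to obtain $z m_{j-1}-m_j=0$ for all $j\neq d$ and $z m_{d-1}-m_d=n$. Running these relations from the bottom nonzero degree of $m$ shows (when $n\neq 0$) that $m$ is supported in degrees $\geq d$ with $m_d=-n$, and running them upward gives $m_{d+k}=z^k m_d$; the vanishing $m_{D+1}=0$ at the top degree $D$ then yields $z^{D-d+1}n=0$, placing $n$ in $(0:z^\infty)$. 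I expect this telescoping bookkeeping to be the only step requiring real care, the remaining verifications being formal degreewise consequences of the $z$-equivariance of graded morphisms.
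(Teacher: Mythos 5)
Your proposal is correct and takes the same route as the paper, whose entire proof is the remark that ``these statements simply require checking definitions'': you have carried out exactly that degreewise checking, and your identifications (exactness via the induced/quotient filtrations, fullness via $z$-equivariance, the counit as the colimit $\varinjlim_d M_{\leq d}=M$, and the telescoping computation showing $\ker\eta=(0:z^\infty)$) are all accurate. The only step with genuine content, the telescoping bookkeeping for $\ker\eta$, is handled correctly, including the edge cases $n=0$ and $m$ supported above degree $d$.
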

\begin{proof}
These statements simply require checking definitions.  
\end{proof}

\begin{defn}
Let $\mathcal{A}$ be a quasi-coherent sheaf of filtered $\mathcal{O}_Z$-algebras.
Write $A = \operatorname{{Spec}} \mathcal{A},$  and
$\widehat{A} = \operatorname{{Proj}} \widehat{\mathcal{A}}$
for the {\bf projective closure} of $A$. 
\end{defn}

\begin{rem}
Our treatment differs only superficially from discussions, such as that in \cite{EGA_II}, on projective closures in which
$\mathcal{A}$ is presented as
$\mathcal{A} = \mathcal{T} / \mathcal{I}$ for a graded ring $\mathcal{T}$ and
a not-necessarily-graded ideal $\mathcal{I}.$  With such a presentation, one defines 
$\widehat{\mathcal{A}}$ as $\,^h\mathcal{T} / \,^h\mathcal{I},$  where $\mathcal{T}$
and $\mathcal{I}$ are given the filtration from the grading on $\mathcal{T}.$  This way, one 
only homogenizes submodules of graded modules.  
Even though 
there is no meaningful difference in these formulations, Proposition \ref{prop:equiv} becomes
awkward to state in terms of submodules of graded modules.
\end{rem}

\begin{defn} (\cite{billera-rose:1991})
 Let $S$ be a sheaf of quasi-splines on $A$. Equip $S$ with the filtration 
 $$
 S_{\leq d} = \{ (g_1, \dotsc, g_s) \in S \ | \ g_i \in \mathcal{A}_{\leq d} \text{ for all } i \}.
 $$
 We call $\,^hS$ the {\bf Billera-Rose homogenization} of $S$.
 \end{defn}

\begin{lem}
$\,^hS$ is a quasi-spline sheaf on $\operatorname{Spec}(\widehat{\mathcal{A}}).$
\end{lem}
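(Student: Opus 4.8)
The plan is to reduce everything to a computation with the graded module $\,^hS$ inside the ambient module $\widehat{\mathcal{A}}^s$. Over the affine scheme $\operatorname{Spec}(\widehat{\mathcal{A}})$ the functor $\tilde{(\cdot)}$ is an exact equivalence, so the sheaf associated to $\,^hS$ is a quasi-coherent $\mathcal{O}$-subalgebra of $\mathcal{O}^s$ exactly when $\,^hS$ is a $\widehat{\mathcal{A}}$-submodule of $\widehat{\mathcal{A}}^s$ that contains the diagonal and is closed under entrywise multiplication. It therefore suffices to check these three module-theoretic properties.

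First I would record that the filtration placed on $S$ is the one induced from the entrywise-filtered algebra $\mathcal{A}^s$; that is, $S_{\leq d} = S \cap (\mathcal{A}_{\leq d})^s$, so the inclusion $S \hookrightarrow \mathcal{A}^s$ is a strict morphism of filtered $\mathcal{A}$-modules. Because $(\mathcal{A}^s)_{\leq d} = (\mathcal{A}_{\leq d})^s$, one has $\,^h(\mathcal{A}^s) = \widehat{\mathcal{A}}^s$, and the exactness of homogenization from Proposition \ref{prop:equiv} then produces an inclusion of graded $\widehat{\mathcal{A}}$-modules $\,^hS \hookrightarrow \widehat{\mathcal{A}}^s$. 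This settles quasi-coherence and the submodule property simultaneously.

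Next I would verify the two algebra conditions on homogeneous elements. For the diagonal, $S$ contains the diagonal copy of $\mathcal{A}$ in $\mathcal{A}^s$ by virtue of being a quasi-spline sheaf, and the diagonal of $\mathcal{A}_{\leq d}$ lands in $S_{\leq d}$; homogenizing places the diagonal copy of $\widehat{\mathcal{A}}$ inside $\,^hS$. For multiplicative closure, a homogeneous degree-$d$ element of $\,^hS$ is $g\,z^d$ with $g = (g_1, \dots, g_s) \in S_{\leq d}$, and its entrywise product with $h\,z^e$, where $h = (h_1, \dots, h_s) \in S_{\leq e}$, equals $(g \cdot h)\,z^{d+e}$. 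Here $g \cdot h$ lies in $S$ since $S$ is closed under entrywise multiplication, while each coordinate satisfies $g_i h_i \in \mathcal{A}_{\leq d} \cdot \mathcal{A}_{\leq e} \subseteq \mathcal{A}_{\leq d+e}$, so $g \cdot h \in S_{\leq d+e}$ and the product lies in $\,^hS$.

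The one point that demands care, and which I regard as the crux, is that multiplicative closure in the graded setting hinges on the filtered-algebra axiom $\mathcal{A}_{\leq d} \cdot \mathcal{A}_{\leq e} \subseteq \mathcal{A}_{\leq d+e}$ cooperating with the closure of $S$ under multiplication, so as to yield $S_{\leq d} \cdot S_{\leq e} \subseteq S_{\leq d+e}$. It is exactly this interplay that upgrades $S$ to a filtered $\mathcal{A}$-subalgebra of $\mathcal{A}^s$ and lets its homogenization inherit a graded $\widehat{\mathcal{A}}$-subalgebra structure inside $\widehat{\mathcal{A}}^s$. Transporting this back through $\tilde{(\cdot)}$ exhibits $\,^hS$ as a quasi-spline sheaf on $\operatorname{Spec}(\widehat{\mathcal{A}})$, completing the argument.
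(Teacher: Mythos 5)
Your proof is correct. The paper omits the proof of this lemma as a routine verification, and your argument is exactly that verification carried out properly: reduce to module algebra over $\widehat{\mathcal{A}}$ via the affine equivalence of $\tilde{(\cdot)}$, note that the filtration on $S$ is the one induced from $(\mathcal{A}^s)_{\leq d}=(\mathcal{A}_{\leq d})^s$ so that $\,^hS \subseteq \widehat{\mathcal{A}}^s$ as graded $\widehat{\mathcal{A}}$-modules, and then check the diagonal and entrywise multiplicative closure on homogeneous elements, where the filtered-algebra axiom $\mathcal{A}_{\leq d}\cdot\mathcal{A}_{\leq e}\subseteq\mathcal{A}_{\leq d+e}$ combines with closure of $S$ under multiplication to give $S_{\leq d}\cdot S_{\leq e}\subseteq S_{\leq d+e}$.
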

\begin{proof}
Omitted.
\end{proof}

One source of the usefulness of homogenization is that it identifies 
degree $d$ bounded elements with degree $d$ homogeneous elements.

\begin{lem}
\label{lem:bijection-lemma}
As $\mathcal{O}_Z$-modules,
$M_{\leq d}$ is isomorphic to  $\,^hM_d.$
\end{lem}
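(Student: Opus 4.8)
The plan is to read the isomorphism directly off the definition of the homogenization. By definition $\,^hM = \bigoplus_d M_{\leq d} \cdot z^d$, so its degree $d$ graded component is $\,^hM_d = M_{\leq d} \cdot z^d$. The natural candidate for the isomorphism is the map $\phi_d \colon M_{\leq d} \to \,^hM_d$ sending $m \mapsto m z^d$.

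First I would verify surjectivity, which is immediate: every element of $\,^hM_d$ is of the form $m z^d$ for some $m \in M_{\leq d}$, by the definition of the direct sum. For injectivity, since $z$ is a formal (``dummy'') variable the assignment $m \mapsto m z^d$ introduces no relations, so the summand $M_{\leq d} \cdot z^d$ is literally a tagged copy of $M_{\leq d}$ and $m z^d = 0$ forces $m = 0$.

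The only point deserving a moment's attention is that $\phi_d$ respects the $\mathcal{O}_Z$-module structures. The $\mathcal{O}_Z$-action on $\,^hM_d$ is inherited from the $\widehat{\mathcal{A}}$-module structure through $\mathcal{O}_Z \to \mathcal{A}_{\leq 0} = \widehat{\mathcal{A}}_0$; for $c \in \mathcal{O}_Z$ with image $\bar c \in \mathcal{A}_{\leq 0}$, the filtration axiom $\mathcal{A}_{\leq 0} \cdot M_{\leq d} \subseteq M_{\leq d}$ gives $\bar c \cdot (m z^d) = (\bar c m) z^d$, whence $\phi_d(c m) = c \cdot \phi_d(m)$. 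Thus $\phi_d$ is the desired $\mathcal{O}_Z$-linear isomorphism. Since the statement is essentially unwound from the definitions, there is no genuine obstacle here; the only work is the bookkeeping confirming $\mathcal{O}_Z$-linearity.
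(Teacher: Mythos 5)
Your proof is correct, and it follows the only natural route: the paper itself omits the proof as immediate from the definition of $\,^hM = \bigoplus_d M_{\leq d}\cdot z^d$, which is exactly what you unwind. Your check of $\mathcal{O}_Z$-linearity via $\mathcal{O}_Z \to \mathcal{A}_{\leq 0}$ and the filtration axiom $\mathcal{A}_{\leq 0}\cdot M_{\leq d} \subseteq M_{\leq d}$ is the right bookkeeping and fills in precisely what the paper left unsaid.
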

\begin{proof}
Omitted.
\end{proof}

Although, $\,^hS$ defines a quasi-spline sheaf $\,^h\tilde{S}$ on the projective closure $\widehat{A}$, it is not always the case that $\,^hS$  can be recovered from $\,^h\tilde{S}.$  However, under mild conditions on $\mathcal{A}$  it can, and  in this situation questions about $S$ to be completely translated into questions in projective geometry.

\begin{prop}
\label{prop:saturation}
If $\widehat{\mathcal{A}} \to \Gamma_*(\mathcal{O}_{\widehat{A}})$
is an isomorphism, then
\begin{itemize}
\item $\,^hS \to \Gamma_*(\,^h\tilde{S})$ for any quasi-spline sheaf $S$ over $A$, and
\item $\,^hI \to \Gamma_*(\,^h\tilde{I})$ for any quasi-coherent $\mathcal{A}$-ideal $I$
\end{itemize}
are too.
\end{prop}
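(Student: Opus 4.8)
The map under consideration is a saturation unit. Since $\,^h\tilde{S}=\widetilde{\,^hS}$ by definition, the arrow $\,^hS \to \Gamma_*(\,^h\tilde{S})$ is exactly the unit $\eta_{\,^hS}$ of the adjunction $\tilde{(\cdot)}\dashv\Gamma_*$. The plan is to realize $\,^hS$ (resp.\ $\,^hI$) as a submodule of a module for which this unit is already known to be an isomorphism, and to control the failure of saturation through the quotient. Concretely, equip the inclusion $S\subseteq\mathcal{A}^s$ with the induced filtration $S_{\leq d}=S\cap(\mathcal{A}_{\leq d})^s$, so that $0\to S\to\mathcal{A}^s\to\mathcal{G}\to 0$ is a short exact sequence of filtered modules, where $\mathcal{G}=\mathcal{A}^s/S$. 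Because homogenization is exact (Proposition \ref{prop:equiv}) and $\,^h(\mathcal{A}^s)=\widehat{\mathcal{A}}^s$, this yields a short exact sequence of graded $\widehat{\mathcal{A}}$-modules $0\to\,^hS\to\widehat{\mathcal{A}}^s\to\,^h\mathcal{G}\to 0$. The ideal case is identical, with $\mathcal{A}^s$ replaced by $\mathcal{A}$ and $S$ by $I$.

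Next I would apply $\Gamma_*\circ\tilde{(\cdot)}$. As $\tilde{(\cdot)}$ is exact and $\Gamma_*$ is left exact, and since $\eta$ is natural, I obtain a commutative diagram with exact rows
$$
\begin{tikzcd}
0 \arrow{r} & \,^hS \arrow{r} \arrow{d} & \widehat{\mathcal{A}}^s \arrow{r} \arrow{d} & \,^h\mathcal{G} \arrow{r} \arrow{d} & 0 \\
0 \arrow{r} & \Gamma_*(\,^h\tilde{S}) \arrow{r} & \Gamma_*(\mathcal{O}_{\widehat{A}})^s \arrow{r} & \Gamma_*(\,^h\tilde{\mathcal{G}})
\end{tikzcd}
$$
whose vertical maps are the units $\eta$. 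The middle vertical map is $(\eta_{\widehat{\mathcal{A}}})^{\oplus s}$, which is an isomorphism by the standing hypothesis that $\widehat{\mathcal{A}}\to\Gamma_*(\mathcal{O}_{\widehat{A}})$ is an isomorphism (using that $\Gamma_*\tilde{(\cdot)}$ commutes with finite direct sums). A short diagram chase then shows that $\eta_{\,^hS}$ is injective, and that, granting injectivity of the right-hand unit $\eta_{\,^h\mathcal{G}}$, it is also surjective, hence an isomorphism. Everything therefore reduces to proving that $\eta_{\,^h\mathcal{G}}$ is injective.

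This last point is the crux, and the main obstacle. The kernel of any saturation unit $\eta_N\colon N\to\Gamma_*(\tilde{N})$ consists of $z$-power torsion: a homogeneous $n\in\ker\eta_N$ has $\tilde{n}=0$, so in particular $\tilde{n}$ vanishes on the affine open $D_+(z)\subseteq\widehat{A}$, which forces $z^k n=0$ for some $k$. On the other hand, every homogenization is $z$-torsion-free: multiplication by $z$ on $\,^hM=\bigoplus_d M_{\leq d}z^d$ is, in degree $d$, the filtration inclusion $M_{\leq d}\hookrightarrow M_{\leq d+1}$, hence injective, so $(0:_{\,^hM}z^\infty)=0$ (consistent with Proposition \ref{prop:equiv}). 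Applying this to $N=\,^h\mathcal{G}$ gives $\ker\eta_{\,^h\mathcal{G}}\subseteq(0:z^\infty)=0$, which is exactly what is needed. Thus the hard work is the identification of the obstruction to surjectivity with the irrelevant torsion of the homogenized cokernel $\,^h\mathcal{G}$; once that torsion is seen to vanish because $z$ acts injectively on homogenizations, both bullets follow simultaneously, and the hypothesis on $\widehat{\mathcal{A}}$ is used only to make the central vertical arrow an isomorphism.
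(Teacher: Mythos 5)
Your proof is correct, but it takes a genuinely different route from the paper's. The paper never forms the cokernel $\mathcal{G}$: instead it uses the hypothesis together with left exactness of $\Gamma_*$ to realize both $\,^hS$ and $\Gamma_*(\,^h\tilde{S})$ as graded submodules of $\widehat{\mathcal{A}}^s$, observes that restricting to $D_+(z)$ (i.e.\ setting $z=1$) carries both onto $S$, notes that $z$ is a non-zero divisor on $\widehat{\mathcal{A}}^s$ and hence on both submodules, and then invokes the \emph{other} adjunction of Proposition \ref{prop:equiv} --- the unit $N \to \,^h(N|_{z=1})$ is an isomorphism on $z$-torsion-free modules --- to identify each submodule with $\,^hS$ at once. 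You instead homogenize the strict short exact sequence $0 \to S \to \mathcal{A}^s \to \mathcal{G} \to 0$, sheafify, apply $\Gamma_*$, and run a four-lemma-style chase on the ladder of saturation units $\eta$ for the adjunction $\tilde{(\cdot)} \dashv \Gamma_*$, reducing everything to injectivity of $\eta_{\,^h\mathcal{G}}$, which you get from the general facts that $\ker \eta_N$ is $z$-power torsion and that homogenizations are $z$-torsion-free (multiplication by $z$ being the filtration inclusions). Both arguments pivot on the same geometric fact --- the saturation defect is supported on the hyperplane $z=0$ --- but they apply $z$-torsion-freeness to different modules: the paper to submodules of the free module $\widehat{\mathcal{A}}^s$, you to the homogenized cokernel $\,^h\mathcal{G}$, which is not a submodule of anything free but is torsion-free for the structural reason above. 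What your version buys is a clean isolation of where the hypothesis enters (only in making the middle vertical arrow an isomorphism; the rest is formal homological algebra), and a reusable lemma identifying $\ker\eta_N$ with irrelevant torsion; what the paper's buys is brevity and a direct simultaneous identification of both modules with $\,^hS$ via dehomogenization, with no diagram chase. One point worth making explicit in your write-up: exactness of homogenization requires the sequence of filtered modules to be strict, which holds here precisely because you chose the induced filtration on $S$ (which agrees with the paper's Billera--Rose filtration $S_{\leq d} = S \cap (\mathcal{A}_{\leq d})^s$) and the quotient filtration on $\mathcal{G}$.
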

\begin{proof}
$\Gamma_*$ is left exact, so we have inclusions 
$\Gamma_*(\,^h\tilde{S}) \to \widehat{\mathcal{A}}^s$ and 
$\Gamma_*(\,^h\tilde{I})  \to \widehat{\mathcal{A}}.$ 
The inclusions $\,^hS \to \widehat{\mathcal{A}}^s$ and 
$\,^hI \to \widehat{\mathcal{A}}^s$ factor though these maps.
$z \in  \widehat{\mathcal{A}}_1,$ and  
since $z$ is in degree $1,$
restricting to $D_+(z) \subseteq \widehat{A}$ has the same effect as setting $z=1.$
Setting $z=1$ carries $\,^h\tilde{S}$ to $S$ and 
$\,^h\tilde{I}$ to $I,$  so 
$\Gamma_*(\,^h\tilde{S})$ is carried onto $S$ and 
$\Gamma_*(\,^h\tilde{I})$  is carried onto $I.$ 

Now observe that $z$ is a non-zero divisor on  $\widehat{\mathcal{A}}$ and 
$\widehat{\mathcal{A}}^s,$ and thus a non-zero divisor on any submodule of these.
So Proposition \ref{prop:equiv} implies both $\Gamma_*(\,^h\tilde{S})$ and $\,^hS$ equal $\,^hS,$
and both  $\Gamma_*(\,^h\tilde{I})$ and $\,^hI$ equal  $\,^hI$. 
\end{proof}

\begin{rem} It is not always the case that
$\widehat{\mathcal{A}} \to \Gamma_*(\mathcal{O}_{\widehat{A}})$
is an isomorphism.  For instance, consider 
$$
\mathcal{A} = \mathbf{C}[x,y] /  \langle xy, y^2 \rangle
$$
with the degree filtration from  $\mathbf{C}[x,y]$.  Then $\widehat{\mathcal{A}} = \mathbf{C}[x,y, z] /  \langle xy, y^2 \rangle$
and $\Gamma_*(\mathcal{O}_{\widehat{A}})  = \mathbf{C}[\sigma][x,y, \sigma, z] /  \langle \sigma x, \sigma^2, \sigma z - y. \rangle.$  A representative of $\sigma$ in the \v{C}ech cohomology with respect to the cover $\{U_x, U_z\}$
is 
$$
\sigma = (0,  \frac{y}{z}) \in 
\mathbf{C}[\frac{z}{x}, \frac{y}{x}] / \langle \frac{y}{x} \rangle \times 
\mathbf{C}[\frac{x}{z}, \frac{y}{z}]/\langle  \frac{x}{z}\frac{y}{z}, \frac{y^2}{z^2}\rangle. 
$$
Counterexamples can also be found by considering the scheme $A$ to be the complement of a hypersurface on a non-projectively-normal variety.
\end{rem}

If $S$ is computed from ideal difference-conditions  both the homogenization $\,^hS$ of
$S$ and and the sheaf $\,^h\tilde{S}$ on the projective closure $\widehat{A}$ can be 
computed from the associated ideal difference-conditions.

\begin{prop}
If $S$ is defined by the ideal difference-conditions $(I_{jk})_{jk},$
then 
\begin{itemize}
\item $\,^hS$ is defined by the ideal difference-conditions 
 $(\,^hI_{jk})_{jk},$  and
 \item $\,^h\tilde{S}$ is defined by 
 the ideal difference-conditions $(\,^h\tilde{I}_{jk})_{jk}.$
 \end{itemize}
 \end{prop}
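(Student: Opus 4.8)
The plan is to establish the first bullet by a graded, degree-by-degree comparison inside $\widehat{\mathcal{A}}^s$, and then to obtain the second bullet almost formally by applying the exact functor $\tilde{(\cdot)}$ to the resulting exact sequence of graded modules. Throughout, each $I_{jk}$ is regarded as a filtered $\mathcal{A}$-module via the induced (subspace) filtration $(I_{jk})_{\leq d} = I_{jk}\cap \mathcal{A}_{\leq d}$, so that $\,^hI_{jk} = \bigoplus_d (I_{jk}\cap \mathcal{A}_{\leq d})\,z^d$; a quick check using the multiplicativity $\mathcal{A}_{\leq d}\cdot \mathcal{A}_{\leq d'} \subseteq \mathcal{A}_{\leq d+d'}$ shows this is a graded ideal of $\widehat{\mathcal{A}}$, so that the difference-conditions it defines make sense.

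For the first bullet, observe that both $\,^hS$ and the difference-condition submodule $\{(G_1,\dotsc,G_s)\in \widehat{\mathcal{A}}^s \mid G_j-G_k\in \,^hI_{jk}\}$ are graded submodules of $\widehat{\mathcal{A}}^s$ (the latter because each $\,^hI_{jk}$ is graded), so it suffices to compare them in each degree $d$. By Lemma \ref{lem:bijection-lemma} every element of $\widehat{\mathcal{A}}_d$ is uniquely $g\,z^d$ with $g\in \mathcal{A}_{\leq d}$, and this identification is $\mathcal{O}_Z$-linear; writing $G_i = g_iz^d$ we get $G_j-G_k = (g_j-g_k)z^d$, and membership $G_j-G_k\in (\,^hI_{jk})_d = (I_{jk}\cap \mathcal{A}_{\leq d})z^d$ is equivalent to $g_j-g_k\in I_{jk}\cap \mathcal{A}_{\leq d}$. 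Since $g_j,g_k\in \mathcal{A}_{\leq d}$ the containment $g_j-g_k\in \mathcal{A}_{\leq d}$ is automatic, so the condition reduces to $g_j-g_k\in I_{jk}$ for all $j<k$, i.e.\ to $(g_1,\dotsc,g_s)\in S$. As the $g_i$ already lie in $\mathcal{A}_{\leq d}$ this says exactly $(g_1,\dotsc,g_s)\in S_{\leq d}$, which is the defining description of $(\,^hS)_d = S_{\leq d}\,z^d$. Hence the two graded submodules agree in every degree, proving that $\,^hS$ is defined by $(\,^hI_{jk})_{jk}$.

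The previous paragraph exhibits an exact sequence of graded $\widehat{\mathcal{A}}$-modules
$$
0 \to \,^hS \to \widehat{\mathcal{A}}^s \xrightarrow{\;\Delta\;} \bigoplus_{jk}\widehat{\mathcal{A}}/\,^hI_{jk},
$$
where $\Delta$ is the difference map $(G_1,\dotsc,G_s)\mapsto (G_j-G_k)_{jk}$. Applying $\tilde{(\cdot)}$, which is exact, sends $\widehat{\mathcal{A}}$ to $\mathcal{O}_{\widehat{A}}$ and each quotient $\widehat{\mathcal{A}}/\,^hI_{jk}$ to $\mathcal{O}_{\widehat{A}}/\,^h\tilde{I}_{jk}$ (apply exactness to $0\to \,^hI_{jk}\to \widehat{\mathcal{A}}\to \widehat{\mathcal{A}}/\,^hI_{jk}\to 0$), while functoriality identifies the image of $\Delta$ with the difference map again. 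Exactness of $\tilde{(\cdot)}$ therefore yields
$$
0 \to \,^h\tilde{S} \to \mathcal{O}_{\widehat{A}}^s \to \bigoplus_{jk}\mathcal{O}_{\widehat{A}}/\,^h\tilde{I}_{jk},
$$
which says precisely that $\,^h\tilde{S}$ is the sheaf defined by the ideal difference-conditions $(\,^h\tilde{I}_{jk})_{jk}$.

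I expect the only point requiring genuine care to be the bookkeeping of filtrations in the first bullet: that $\,^hI_{jk}$ is formed from the subspace filtration $I_{jk}\cap \mathcal{A}_{\leq d}$, and that this is exactly what makes the degree-$d$ difference condition collapse to plain membership $g_j-g_k\in I_{jk}$. Once that is pinned down the remaining steps are formal — the degree-by-degree comparison is a direct consequence of Lemma \ref{lem:bijection-lemma}, and the passage to $\widehat{A}$ uses nothing beyond the exactness of $\tilde{(\cdot)}$.
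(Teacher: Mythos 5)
Your proposal is correct, but your route to the first bullet is genuinely different from the paper's. The paper deduces it as an immediate consequence of Proposition \ref{prop:equiv}: homogenization $M \mapsto \,^hM$ is an exact functor on filtered modules, so applying $\,^h(\cdot)$ to the left-exact sequence $0 \to S \to \mathcal{A}^s \to \bigoplus_{jk}\mathcal{A}/I_{jk}$ (and to $0 \to I_{jk} \to \mathcal{A} \to \mathcal{A}/I_{jk} \to 0$, which identifies $\,^h(\mathcal{A}/I_{jk})$ with $\widehat{\mathcal{A}}/\,^hI_{jk}$) exhibits $\,^hS$ as the kernel of the homogenized difference map with no computation. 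You instead prove equality of two graded submodules of $\widehat{\mathcal{A}}^s$ degree by degree via Lemma \ref{lem:bijection-lemma}, which in effect unwinds by hand the exactness that Proposition \ref{prop:equiv} packages. Each approach has something to recommend it: yours is more self-contained and makes explicit a point the paper leaves implicit, namely that $S$ and the $I_{jk}$ carry the subspace filtrations induced from $\mathcal{A}^s$ and $\mathcal{A}$, and that this is exactly what makes the degree-$d$ condition collapse to membership $g_j - g_k \in I_{jk}$; the paper's is shorter and shows the statement is purely formal, requiring no inspection of degrees. Your second bullet --- applying the exact functor $\tilde{(\cdot)}$ to the graded exact sequence and identifying the quotients $\widetilde{\widehat{\mathcal{A}}/\,^hI_{jk}}$ with $\mathcal{O}_{\widehat{A}}/\,^h\tilde{I}_{jk}$ --- is exactly the paper's argument.
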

 \begin{proof}
The first statement is an immediate consequence of Proposition \ref{prop:equiv}. 
 The second  follows from the first and the exactness of $\tilde{(\cdot)}:$ localization is exact, and popping out the $0^\text{th}$ graded piece is exact. 
 \end{proof}

The preceding results establish what is needed from the Billera-Rose homogenization 
to use it as a tool for studying quasi-splines over affine schemes
using projective geometric techniques.  However, in what 
is in some sense the opposite direction, we include the
 following observation relating quasi-splines on projective schemes
defined by ideal difference-conditions and those on their affine cones.

\begin{prop}
Let $\mathcal{S}$ be a quasi-spline sheaf over a projective $Z$-scheme $B$
defined by the ideal difference-conditions $(\mathfrak{I}_{jk})_{jk}.$
Then 
$
\Gamma_*(\mathcal{S})
$
is a module of quasi-splines over the $Z$-affine cone
$
\operatorname{{Spec}} \Gamma_*(\mathcal{O}_B)
$
defined by the ideal difference conditions 
$
( \Gamma_*(\mathfrak{I}_{jk}))_{jk}.
$
\end{prop}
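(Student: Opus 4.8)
The plan is to read off the defining condition as the exactness of a short sequence, push that sequence through the left-exact functor $\Gamma_*$, and then deal carefully with the one place where $\Gamma_*$ misbehaves, namely that it does not commute with cokernels. Saying that $\mathcal{S}$ is defined by the ideal difference-conditions $(\mathfrak{I}_{jk})_{jk}$ means precisely that $\mathcal{S}$ is the kernel of
$$
\Delta \colon \mathcal{O}_B^s \to \bigoplus_{jk} \mathcal{O}_B/\mathfrak{I}_{jk}, \qquad (g_1, \dotsc, g_s) \mapsto (g_j - g_k + \mathfrak{I}_{jk})_{jk},
$$
so that $0 \to \mathcal{S} \to \mathcal{O}_B^s \xrightarrow{\Delta} \bigoplus_{jk}\mathcal{O}_B/\mathfrak{I}_{jk}$ is exact at its first two spots. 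First I would apply $\Gamma_*$: since it is left exact (established above alongside the relationship between $\Gamma_*$, $\Gamma_{\geq m}$, and $\tilde{(\cdot)}$), it preserves this kernel, giving $\Gamma_*(\mathcal{S}) = \ker(\Gamma_*(\Delta))$. Because $\Gamma_*$ commutes with finite direct sums, the source becomes $\Gamma_*(\mathcal{O}_B)^s$ and the target becomes $\bigoplus_{jk}\Gamma_*(\mathcal{O}_B/\mathfrak{I}_{jk})$.

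Next I would write down the map whose kernel is, by definition, the quasi-spline module over $\operatorname{Spec}\Gamma_*(\mathcal{O}_B)$ attached to the ideals $(\Gamma_*(\mathfrak{I}_{jk}))_{jk}$, namely
$$
\Delta' \colon \Gamma_*(\mathcal{O}_B)^s \to \bigoplus_{jk}\Gamma_*(\mathcal{O}_B)/\Gamma_*(\mathfrak{I}_{jk}), \qquad (G_1,\dotsc,G_s) \mapsto (G_j - G_k)_{jk}.
$$
Here one first records the trivial point that each $\Gamma_*(\mathfrak{I}_{jk})$ is a graded ideal of $\Gamma_*(\mathcal{O}_B)$, which is immediate from the multiplicativity of $\Gamma_*$ and the fact that $\mathfrak{I}_{jk}$ is an ideal sheaf. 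The comparison of the two targets is the crux: applying the left-exact functor $\Gamma_*$ to $0 \to \mathfrak{I}_{jk} \to \mathcal{O}_B \to \mathcal{O}_B/\mathfrak{I}_{jk} \to 0$ identifies $\Gamma_*(\mathfrak{I}_{jk})$ with the kernel of $\Gamma_*(\mathcal{O}_B) \to \Gamma_*(\mathcal{O}_B/\mathfrak{I}_{jk})$, so that map factors through an \emph{injection} $\iota_{jk} \colon \Gamma_*(\mathcal{O}_B)/\Gamma_*(\mathfrak{I}_{jk}) \hookrightarrow \Gamma_*(\mathcal{O}_B/\mathfrak{I}_{jk})$. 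Assembling these yields an injection $\iota = \bigoplus_{jk}\iota_{jk}$ for which $\Gamma_*(\Delta) = \iota \circ \Delta'$, as one checks by tracking $(G_1,\dotsc,G_s) \mapsto (G_j - G_k)_{jk}$ through both sides.

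The argument then closes formally: since $\iota$ is injective, $\ker(\Gamma_*(\Delta)) = \ker(\iota \circ \Delta') = \ker(\Delta')$, and hence $\Gamma_*(\mathcal{S}) = \ker(\Delta')$ is exactly the module cut out over $\operatorname{Spec}\Gamma_*(\mathcal{O}_B)$ by the ideal difference-conditions $(\Gamma_*(\mathfrak{I}_{jk}))_{jk}$. That this kernel is in fact a quasi-spline module — contains the diagonal and is closed under entrywise products — is automatic for any ideal difference-condition module, since $G_j G_j' - G_k G_k' = G_j(G_j' - G_k') + G_k'(G_j - G_k)$ lies in the ideal $\Gamma_*(\mathfrak{I}_{jk})$ whenever the two factors' differences do. The main obstacle, and the one genuinely conceptual point, is the gap between $\Gamma_*(\mathcal{O}_B/\mathfrak{I}_{jk})$ and $\Gamma_*(\mathcal{O}_B)/\Gamma_*(\mathfrak{I}_{jk})$: these need not coincide because $\Gamma_*$ is not right exact (the discrepancy is controlled by $R^1\pi_*\mathfrak{I}_{jk}$), so one cannot naively identify the two difference-condition maps. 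The saving observation is that equating the two kernels requires only injectivity of $\iota$, never its surjectivity, which is exactly why left exactness of $\Gamma_*$ is enough and no flatness or base-change hypothesis is needed.
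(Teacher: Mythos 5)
Your proof is correct and follows exactly the paper's approach: the paper's entire proof is the single line ``$\Gamma_*$ is left exact,'' and your argument is precisely the careful expansion of that observation, including the one genuinely delicate point (that $\Gamma_*(\mathcal{O}_B)/\Gamma_*(\mathfrak{I}_{jk})$ injects into $\Gamma_*(\mathcal{O}_B/\mathfrak{I}_{jk})$, so only injectivity, never surjectivity, of this comparison is needed to identify the two kernels).
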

\begin{proof}
$\Gamma_*$ is left exact.
\end{proof}

\subsection*{Acknowledgements}   
The author would like to thank F. Sottile and D. Cox for their encouragement and  comments on an early version of this manuscript.   
The main ideas of this paper were sorted out while the author was at the IH\'ES under the support of National Science Foundation Grant No. 1002477.  We appreciate the the wonderful working environment and hospitality l'Institut provides.
Finally, thanks to R. Perline for explaining B\'ezier-Bernstein methods to us, and L. Lapointe for helping translate the Abstract into French.

\newpage
\bibliography{spline_moduli}

\begin{thebibliography}{BDCP90}

\bibitem[AS87]{alfeld-schumaker-1987}
Peter Alfeld and L.~L. Schumaker.
\newblock The dimension of bivariate spline spaces of smoothness {$r$} for
  degree {$d\geq 4r+1$}.
\newblock {\em Constr. Approx.}, 3(2):189--197, 1987.

\bibitem[AS90]{alfeld-schumaker-1990}
Peter Alfeld and Larry~L. Schumaker.
\newblock On the dimension of bivariate spline spaces of smoothness {$r$} and
  degree {$d=3r+1$}.
\newblock {\em Numer. Math.}, 57(6-7):651--661, 1990.

\bibitem[ASW93]{alfeld-schumaker-whiteley}
Peter Alfeld, Larry~L. Schumaker, and Walter Whiteley.
\newblock The generic dimension of the space of {$C^1$} splines of degree
  {$d\geq 8$} on tetrahedral decompositions.
\newblock {\em SIAM J. Numer. Anal.}, 30(3):889--920, 1993.

\bibitem[BDCP90]{bifet-deconcini-procesi}
Emili Bifet, Corrado De~Concini, and Claudio Procesi.
\newblock Cohomology of regular embeddings.
\newblock {\em Adv. Math.}, 82(1):1--34, 1990.

\bibitem[Ber12]{bernstein-1912}
Serge\"i~Natanovitch Bernstein.
\newblock D\'emonstration du th\'eor\`eme de {W}eierstrauss, fond\'ee sur le
  calcul des probabilit\'es.
\newblock {\em Comm. Soc. Math. Kharkov}, 13, 1912.

\bibitem[BH93]{bruns-herzog}
Winfried Bruns and J{\"u}rgen Herzog.
\newblock {\em Cohen-{M}acaulay rings}, volume~39 of {\em Cambridge Studies in
  Advanced Mathematics}.
\newblock Cambridge University Press, Cambridge, 1993.

\bibitem[Bil88]{billera-1988}
Louis~J. Billera.
\newblock Homology of smooth splines: generic triangulations and a conjecture
  of {S}trang.
\newblock {\em Trans. Amer. Math. Soc.}, 310(1):325--340, 1988.

\bibitem[BR89]{billera-rose:1989}
L.~J. Billera and L.~L. Rose.
\newblock Gr\"obner basis methods for multivariate splines.
\newblock In {\em Mathematical methods in computer aided geometric design
  ({O}slo, 1988)}, pages 93--104. Academic Press, Boston, MA, 1989.

\bibitem[BR91]{billera-rose:1991}
Louis~J. Billera and Lauren~L. Rose.
\newblock A dimension series for multivariate splines.
\newblock {\em Discrete Comput. Geom.}, 6(2):107--128, 1991.

\bibitem[BR92]{billera-rose-modules}
Louis~J. Billera and Lauren~L. Rose.
\newblock Modules of piecewise polynomials and their freeness.
\newblock {\em Math. Z.}, 209(4):485--497, 1992.

\bibitem[Bri97]{brion-1997}
M.~Brion.
\newblock Equivariant {C}how groups for torus actions.
\newblock {\em Transform. Groups}, 2(3):225--267, 1997.

\bibitem[Cou43]{courant-1943}
R.~Courant.
\newblock Variational methods for the solution of problems of equilibrium and
  vibrations.
\newblock {\em Bull. Amer. Math. Soc.}, 49:1--23, 1943.

\bibitem[dC59]{deCasteljau-1959}
P.~de~Casteljau.
\newblock {Outillage m\'ethods calcul}.
\newblock {\em {Paris, Andr\'e Citro\"en Automobiles SA}}, 1959.

\bibitem[Far77]{farin-1977}
Gerald~E. Farin.
\newblock {\em Konstruktion und Eigenschaften von B\'ezier-Kurven und B\'ezier
  Fl\"achen}.
\newblock PhD thesis, University of Braunschweig, 1977.

\bibitem[FS13]{foucart-sorokina-2013}
Simon Foucart and Tatyana Sorokina.
\newblock Generating dimension formulas for multivariate splines.
\newblock {\em Albanian J. Math.}, 7(1):25--35, 2013.

\bibitem[GKM98]{goresky-kottwitz-macpherson-1998}
Mark Goresky, Robert Kottwitz, and Robert MacPherson.
\newblock Equivariant cohomology, {K}oszul duality, and the localization
  theorem.
\newblock {\em Invent. Math.}, 131(1):25--83, 1998.

\bibitem[Got78]{gotzmann-1978}
Gerd Gotzmann.
\newblock Eine {B}edingung f\"ur die {F}lachheit und das {H}ilbertpolynom eines
  graduierten {R}inges.
\newblock {\em Math. Z.}, 158(1):61--70, 1978.

\bibitem[GPT13]{gilbert-polster-tymoczko}
Simcha Gilbert, Shira Polster, and Julianna Tymoczko.
\newblock Generalized splines on arbitrary graphs.
\newblock 2013, 0902.0885.
\newblock arXiv:1306.0801.

\bibitem[Gro61]{grothendieck-hilbert}
Alexander Grothendieck.
\newblock Techniques de construction et th\'eor\`emes d'existence en
  g\'eom\'etrie alg\'ebrique. {IV}. {L}es sch\'emas de {H}ilbert.
\newblock In {\em S\'eminaire {B}ourbaki, {V}ol.\ 6}, pages Exp.\ No.\ 221,
  249--276. Paris, 1960-61.

\bibitem[Gro61]{EGA_II}
A.~Grothendieck.
\newblock \'{E}l\'ements de g\'eom\'etrie alg\'ebrique. {II}. \'{E}tude globale
  \'el\'ementaire de quelques classes de morphismes.
\newblock {\em Inst. Hautes \'Etudes Sci. Publ. Math.}, (8):222, 1961.

\bibitem[Gro63]{EGA_III_2}
A.~Grothendieck.
\newblock \'{E}l\'ements de g\'eom\'etrie alg\'ebrique. {III}. \'{E}tude
  cohomologique des faisceaux coh\'erents. {II}.
\newblock {\em Inst. Hautes \'Etudes Sci. Publ. Math.}, (17):91, 1963.

\bibitem[GS97]{geramita-schenk-1997}
A.~V. Geramita and H.~Schenck.
\newblock Fat points, inverse systems and piecewise polynomial functions.
\newblock In {\em The {C}urves {S}eminar at {Q}ueen's, {V}ol.\ {XI}
  ({K}ingston, {ON}, 1997)}, volume 105 of {\em Queen's Papers in Pure and
  Appl. Math.}, pages 98--116. Queen's Univ., Kingston, ON, 1997.

\bibitem[Har77]{hartshorne-1977}
Robin Hartshorne.
\newblock {\em Algebraic geometry}.
\newblock Springer-Verlag, New York, 1977.
\newblock Graduate Texts in Mathematics, No. 52.

\bibitem[Hre41]{hrennikoff-1941}
A.~Hrennikoff.
\newblock Solution of problems of elasticity by the framework method.
\newblock {\em J. Appl. Mech.}, 8:A--169--A--175, 1941.

\bibitem[Iar97]{iarrobino-1997}
A.~Iarrobino.
\newblock Inverse system of a symbolic power. {III}. {T}hin algebras and fat
  points.
\newblock {\em Compositio Math.}, 108(3):319--356, 1997.

\bibitem[LS07]{lai-schumaker}
Ming-Jun Lai and Larry~L. Schumaker.
\newblock {\em Spline functions on triangulations}, volume 110 of {\em
  Encyclopedia of Mathematics and its Applications}.
\newblock Cambridge University Press, Cambridge, 2007.

\bibitem[M{\"o}b27]{mobius:1827}
August~Ferdinand M{\"o}bius.
\newblock {\em Der barycentrische {C}alcul}.
\newblock J. A. Barth Verlag, Leipzig, 1827.

\bibitem[Mum66]{mumford-curves}
David Mumford.
\newblock {\em Lectures on curves on an algebraic surface}.
\newblock With a section by G. M. Bergman. Annals of Mathematics Studies, No.
  59. Princeton University Press, Princeton, N.J., 1966.

\bibitem[Pay06]{payne-2006}
Sam Payne.
\newblock Equivariant {C}how cohomology of toric varieties.
\newblock {\em Math. Res. Lett.}, 13(1):29--41, 2006.

\bibitem[Sch46]{schoenberg-1946}
I.~J. Schoenberg.
\newblock Contributions to the problem of approximation of equidistant data by
  analytic functions. {P}art {A}. {O}n the problem of smoothing or graduation.
  {A} first class of analytic approximation formulae.
\newblock {\em Quart. Appl. Math.}, 4:45--99, 1946.

\bibitem[Sch79]{schumaker:1979}
Larry~L. Schumaker.
\newblock On the dimension of spaces of piecewise polynomials in two variables.
\newblock In {\em Multivariate approximation theory ({P}roc. {C}onf., {M}ath.
  {R}es. {I}nst., {O}berwolfach, 1979)}, volume~51 of {\em Internat. Ser.
  Numer. Math.}, pages 396--412. Birkh\"auser, Basel, 1979.

\bibitem[Sch97]{schenck:1997}
Hal Schenck.
\newblock A spectral sequence for splines.
\newblock {\em Adv. in Appl. Math.}, 19(2):183--199, 1997.

\bibitem[Sch11]{schenck-2011}
Hal Schenck.
\newblock Equivariant chow cohomology of nonsimplicial toric varieties.
\newblock Transactions of the A.M.S., 364 (2012) 4041-4051, 2011,
  arXiv:1101.0352.

\bibitem[Sch14]{schenck-2014}
Hal Schenck.
\newblock Splines on the {A}lfeld split of a simplex and type {A} root systems.
\newblock {\em J. Approx. Theory}, 182:1--6, 2014.

\bibitem[Ser55]{serre:1955}
Jean-Pierre Serre.
\newblock Faisceaux alg\'ebriques coh\'erents.
\newblock {\em Ann. of Math. (2)}, 61:197--278, 1955.

\bibitem[SS97]{schenck-stillman:1997}
Hal Schenck and Mike Stillman.
\newblock Local cohomology of bivariate splines.
\newblock {\em J. Pure Appl. Algebra}, 117/118:535--548, 1997.
\newblock Algorithms for algebra (Eindhoven, 1996).

\bibitem[SS02]{Schenck-Stiller-2002}
Henry~K. Schenck and Peter~F. Stiller.
\newblock Cohomology vanishing and a problem in approximation theory.
\newblock {\em Manuscripta Math.}, 107(1):43--58, 2002.

\bibitem[{Sta}13]{stacks-project}
The {Stacks Project Authors}.
\newblock {\itshape Stacks Project}.
\newblock \url{http://stacks.math.columbia.edu}, 2013.

\bibitem[Sti83]{stiller:1983}
Peter~F. Stiller.
\newblock Certain reflexive sheaves on {${\bf P}^{n}_{{\bf C}}$} and a problem
  in approximation theory.
\newblock {\em Trans. Amer. Math. Soc.}, 279(1):125--142, 1983.

\bibitem[Str74]{strang:1974}
Gilbert Strang.
\newblock The dimension of piecewise polynomial spaces, and one-sided
  approximation.
\newblock In {\em Conference on the {N}umerical {S}olution of {D}ifferential
  {E}quations ({U}niv. {D}undee, {D}undee, 1973)}, pages 144--152. Lecture
  Notes in Math., Vol. 363. Springer, Berlin, 1974.

\bibitem[Wei85]{weierstrass-1885}
K.~Weierstrass.
\newblock {\"Uber die analytische Darstellbarkeit sogenannter willk\"urlicher
  Functionen einer reellen Ver\"anderlichen}.
\newblock {\em {Sitzungsberichte der K\"oniglich Preu\ss ischen Akademie der
  Wissenschaften zu Berlin}}, (II), 1885.

\bibitem[Yuz92]{yuzvinsky:1992}
Sergey Yuzvinsky.
\newblock Modules of splines on polyhedral complexes.
\newblock {\em Math. Z.}, 210(2):245--254, 1992.

\end{thebibliography}
\bibliographystyle{halpha}  

\noindent
{Department of Mathematics, Drexel University, Philadelphia, PA 19104\\
\texttt{pclarke@math.drexel.edu}}

\end{document}